\documentclass[a4paper,10pt]{article}
\usepackage{latexsym,amssymb,amsfonts,amsmath,amsthm,mathrsfs,amstext,color,graphicx,times}
\usepackage[mathscr]{euscript}
\usepackage{indentfirst}

\newcommand{\R}{\mathbb{R}}
\newcommand{\Q}{\mathbb{Q}}
\newcommand{\N}{\mathbb{N}}

\usepackage{wrapfig}
\usepackage[all]{xy}
\usepackage{tikz}
\usetikzlibrary{patterns}

\newtheorem{theorem}{Theorem}[section]
\newtheorem{corollary}[theorem]{Corollary}
\newtheorem{proposition}{Proposition}[section]
\newtheorem{remark}{Remark}
\newtheorem{lemma}{Lemma}[section]
\newtheorem{example}{Example}[section]
\newtheorem{definition}{Definition}[section]
\newcommand{\tos}{\rightrightarrows} 

\DeclareMathOperator*{\argmin}{arg\,min}
\DeclareMathOperator{\co}{co}
\DeclareMathOperator{\Fix}{Fix}
\DeclareMathOperator{\EP}{EP}
\DeclareMathOperator{\CFP}{MEP}
\DeclareMathOperator{\inte}{int}

\title{Quasi-Equilibrium Problems with Non-self Constraint Map}
\author{John Cotrina
	\thanks{Universidad del Pac\'ifico. Av. Salaverry 2020, Jes\'us Mar\'ia, Lima, Per\'u. Email: 
	\texttt{\{ cotrina\_je,~zuniga\_jj\}@up.edu.pe}} 
	\and Javier Z\'u\~niga\footnotemark[1]}

\begin{document}
\maketitle
\begin{abstract}
In 2016 Aussel, Sultana and Vetrivel developed the concept of projected solution for quasi-variational inequality
problems and projected Nash equilibrium. 
We introduce a new concept of solution for quasi-equilibrium problems and
we study the existence of such solutions. 
Additionally, as a consequence of our results, we give existence results of projected solutions for quasi-optimization problems, quasi-variational inequalities problems and generalized Nash equilibrium problems.
\bigskip

\noindent{\bf Keywords}:  Quasi-equilibrium problem,~Quasi-variational inequality,~Generalized Nash equilibrium,~Non-self map

\bigskip

\noindent{{\bf MSC (2010)}: 49J40,~90C26,~90B10 } 

\end{abstract}

\section{Introduction}
Given  a non-empty subset $C$ of $\R^n$ and a 
bifunction $f:\R^n\times \R^n\rightarrow \R$, the \emph{equilibrium problem} (EP) is the following:
\begin{align}\label{EP}
\mbox{find }x \in C \mbox{ such that }f (x, y) \geq 0,~\mbox{ for all } y \in C. \tag{EP}
\end{align}
The equilibrium problem was introduced in \cite{OB93} and has been extensively studied in recent years (see e.g. 
\cite{castellani2012,JCYG,IKS06,FFB} and the references therein). 
Related to \eqref{EP}, it is natural to consider the following problem:
\begin{align}\label{cfp}
\mbox{find }x\in C\mbox{ such that }f(y,x)\leq 0,\mbox{ for all }y\in C, \tag{MEP}
\end{align}
which was called \emph{Minty equilibrium problem} (MEP) in \cite{castellani2012}. This problem corresponds to a particular case of the \emph{Convex Feasibility Problem} \cite{KKM,Kfan2}. 
It was established in \cite{ACI}, that if $f$ has the upper sign property, 
then every solution of \eqref{cfp} is a solution of \eqref{EP}. Moreover, both solution sets trivially coincide when $f$ is also pseudomonotone.

The classical example of equilibrium problem is the variational inequality problem (see e.g. \cite{John2001,AH04}), which is defined as follows:
a Stampacchia variational inequality problem is formulated as
\begin{align*}
\left.\begin{array}{l}
      \mbox{find }x\in C\mbox{ such that there exists }x^*\in T(x)\\
      \mbox{with }\langle x^*,y-x\rangle\geq0, \mbox{ for all }y\in C,
      \end{array}
\right. 
\end{align*}
where $T:\R^n\tos \R^n$ is a set-valued map and $\langle\cdot,\cdot\rangle$
denotes the Euclidean inner product. So, if $T$ has compact values, and  we define
the representative bifunction $f_T$ of $T$ by
\begin{align}\label{T-f}
 f_T(x,y)=\sup_{x^*\in T(x)}\langle x^*,y-x\rangle, 
\end{align}
it follows that every solution of the equilibrium problem associated to $f_T$ and $C$ is a solution of
the variational inequality problem associated to $T$ and $C$, and conversely.

Given a set-valued map $K:C\tos C$, the \emph{quasi-equilibrium problem} (QEP) associated to $f$ and $K$
is the following
\begin{align}\label{QEP}
 \mbox{find }x\in K(x) \mbox{ such that }  f(x,y)\geq0,~\mbox{for all }y\in K(x).\tag{QEP}
\end{align}
The associated \emph{Minty quasi-equilibrium problem} (MQEP), consists of
\begin{align}\label{MQEP}
\mbox{find }x\in K(x)\mbox{ such that }f(x,y)\leq0,~\mbox{for all }y\in K(x).\tag{MQEP}
\end{align} 

A (Minty) quasi-equilibrium problem is an (Minty) equilibrium problem in which the constraint set depends on the optimizing variable.
This dependence allows one to model some complex problems such as quasi-optimization problems, quasi-variational inequalities,
generalized Nash equilibrium problems, among others. These problems are unified in a convenient way,
and many of the results obtained for one can be extended, with suitable modifications, to 
general quasi-equilibrium problems, thus obtaining wider applicability. 
 
A quasi-equilibrium problem is defined by a bifunction and a  constraint set-valued map.
In most of the results on the existence of solutions for quasi-equilibrium problems in the literature,
the constraint map is assumed to be a self-map (see for instance \cite{ACI,castellani_Giuli15,JC-JZ2,castellani2018}). Our aim in this paper is to study
quasi-equilibrium problems with non-self constraint map. This type of problems arises, for example, in the electricity market as in the work of Aussel, Sultana and Vetrivel in \cite{ASV-2016}. In this case there is usually no solution
to the quasi-equilibrium problem. 

We study the same concept of projected solution introduced in  \cite{ASV-2016}, but for quasi-equilibrium problems. In doing so we improve two general results presented in \cite{ASV-2016}. As a consequence of our results, we obtain applications for quasi-optimization problems, quasi-variational inequalities and Nash equilibria.

The paper is organized as follows. 
Notation and basic definitions are given in  Section \ref{preli}. In Section \ref{characterization}, we show that the notions of generalized monotonicity for bifunctions can be characterized in terms of solution sets of 
\eqref{EP} and \eqref{cfp}. Moreover, we show that the concept of pseudomonotonicity and upper sign property are related under suitable assumptions.
Then, in Section \ref{PS-QEP}, we present the projected solution for quasi-equilibrium problems and prove different results on the existence of such solutions and recover several well-know theorems, one of them is due to Ky Fan.
 Finally, in Section \ref{applications}, we consider three applications: first, we obtain an existence result for quasi-optimization
problems with a non-self constraint map; second, an application to quasi-variational inequalities is given; and finally, we show the existence of projected  solutions for Nash equilibria.

\section{Basic definitions and preliminaries}\label{preli}
Let $X$ and $Y$ be Hausdorff topological spaces and let $K:X\tos Y$ be a set-valued map.
We recall that $K$ is:
\begin{itemize}
 \item \emph{closed}, when for any net $(x_i,y_i)_{i\in I}$ in the graph of $K$ such that $(x_i,y_i)_{i\in I}$ converges to $(x_0,y_0)$, 
 we have $y_0\in K(x_0)$;
 \item \emph{lower semicontinuous by nets},  when for any $x_0$, and any net $(x_i)_{i\in I}$ converging to $x_0$ and any 
 $y_0\in K(x_0)$, there exists a subnet $(x_{\varphi(j)})_{j\in J}$ of $(x_i)$ and a net $(y_j)_{j\in J}$
 converging to $y_0$ such that $y_j\in K(x_{\varphi(j)})$, for all $j\in J$;
 \item \emph{lower semicontinuous by sets}, when for any $x_0$ and any neighborhood $V$ of $K(x_0)$, there exists a neighborhood $U$ of $x_0$ such that for all $x \in U$, the set $K(x) \cap V$ is not empty;
 \item \emph{upper semicontinuous (usc)}, when for any $x_0$ and any neighborhood $V$ of $K(x_0)$, there exists a 
 neighborhood $U$ of $x_0$ such that $K(U) \subset V$.
\end{itemize}
 
In \cite[Proposition 2.5.6]{VMPOS} the authors show that lower semicontinuity by nets is equivalent to lower semicontinuity by sets. Hence, from now on, we can use any of these two definitions interchangeably and refer to them as lsc. 
 
A fixed point of a set-valued map $T:X\tos X$ is a point $x\in X$ such that $x\in T(x)$. The set of fixed points of $T$ is denoted by $\Fix(T)$. 
 
Our existence result will be obtained as a consequence of Himmelberg's fixed point theorem, which is stated below and it can be found in \cite[Theorem 2]{Himmelberg}.

\begin{theorem}[Himmelberg]\label{HFPT}
 Let $A$ be a non-empty and convex subset of a Hausdorff, locally convex topological vector space $Y$,
 and let $T:A\tos A$ be a set-valued map. If $T$ is usc with convex, closed and non-empty values, and $T(A)$ is contained in some compact subset $N$ of $A$,
then $\Fix(T)$ is a non-empty set.
\end{theorem}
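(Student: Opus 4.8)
The plan is to derive this from Brouwer's fixed point theorem by approximating $T$ with continuous single-valued maps on finite-dimensional polytopes and then passing to a limit; the compactness of $N$ will be used only at that final stage. I would not try to apply a compact-domain theorem of Kakutani--Fan--Glicksberg type directly, because $A$ itself is not compact and $T$ is defined only on $A$, so there is no obvious compact convex set that $T$ maps into itself.

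For each convex symmetric open neighborhood $U$ of the origin, I would build an approximation $f_U$ of $T$ as follows. Since $N$ is compact, cover it by finitely many translates $a_1+U,\dots,a_m+U$ with $a_i\in N\subseteq A$; take a continuous partition of unity $(p_i)_{i=1}^m$ on $N$ subordinate to this cover, and choose $b_i\in T(a_i)\subseteq N$. Then $f_U(x)=\sum_{i=1}^m p_i(x)\,b_i$ is continuous and maps the finite-dimensional compact convex polytope $S_U=\co\{b_1,\dots,b_m\}\subseteq A$ into itself, so Brouwer's theorem yields a point $x_U\in S_U$ with $x_U=\sum_i p_i(x_U)\,b_i$. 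By construction every active index (those with $p_i(x_U)>0$) satisfies $a_i\in x_U+U$, so $x_U$ is a convex combination of images $b_i\in T(a_i)$ whose base points $a_i$ are within $U$ of $x_U$; this is the sense in which $x_U$ is an approximate fixed point.

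To pass to the limit, I would index the points $x_U$ by the neighborhood filter directed by reverse inclusion. The delicate point, and what I expect to be the main obstacle, is that the $x_U$ lie in $\co(N)$, which need not be relatively compact when $Y$ is not complete, so there is no immediate convergent subnet. I would resolve this by working in the completion $\widetilde Y$ of $Y$, where Krein's theorem guarantees that $\overline{\co}(N)$ is compact; this produces a subnet with $x_U\to x_0\in\overline{\co}(N)$. Because the active base points satisfy $a_i\in x_U+U$ they also converge to $x_0$, and since they lie in $N$, which is compact and hence closed, the limit satisfies $x_0\in N\subseteq A$; in particular $T(x_0)$ is defined.

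It then remains to check that $x_0$ is genuinely fixed, and here I would use upper semicontinuity together with the closedness and convexity of the values. Given any convex neighborhood $W$ of the origin, upper semicontinuity at $x_0$ forces $b_i\in T(a_i)\subseteq T(x_0)+W$ for all active indices once $U$ is small enough, whence $x_U=\sum_i p_i(x_U)\,b_i\in T(x_0)+W$ by convexity of $T(x_0)$. Letting $x_U\to x_0$ and then shrinking $W$, the closedness of $T(x_0)$ gives $x_0\in T(x_0)$, so $\Fix(T)\neq\emptyset$.
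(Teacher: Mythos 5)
The paper does not prove this statement at all --- it quotes it directly from Himmelberg's 1972 paper --- so your proposal has to stand on its own, and it has a genuine gap at the Brouwer step. Your partition of unity $(p_i)$ is constructed on $N$, so $f_U(x)=\sum_{i=1}^m p_i(x)\,b_i$ is defined only for $x\in N$; yet you apply Brouwer's theorem to $f_U$ as a self-map of $S_U=\co\{b_1,\dots,b_m\}$. Since $N$ is compact but in general \emph{not convex}, $S_U$ is not contained in $N$, and in fact not even in $\bigcup_i(a_i+U)$: if, say, $N=\{0,e\}$ with $e$ far outside $U$, then $S_U$ is the segment $[0,e]$ and its midpoint lies in neither $a_i+U$, so no family of continuous functions supported in the sets $a_i+U$ can sum to $1$ there. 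Hence $f_U$ is not a well-defined self-map of any compact convex set, Brouwer does not apply, and the approximate fixed points $x_U$ on which your whole limit argument rests are never produced. This is exactly the difficulty that separates Himmelberg's theorem from the Kakutani--Fan--Glicksberg theorem (no compact convex set is mapped into itself), which you correctly identified at the outset --- but your construction runs into it rather than around it.

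The gap is not repaired by the obvious fix. If you instead build the approximation on the polytope $P_U=\co\{a_1,\dots,a_m\}\subset A$ (cover $P_U$ by sets $x_k+U$ with $x_k\in P_U$, take the partition of unity on $P_U$, pick $y_k\in T(x_k)\subseteq N$ and snap it to a vertex $a_{j(k)}\in y_k+U$, so that $g_U=\sum_k p_k(\cdot)\,a_{j(k)}$ maps $P_U$ into itself), then Brouwer applies, but the fixed point $x_U$ is now only a convex combination of points of $N$: it lies in $\co(N)$ and need not be within $U$ of $N$. After passing to the completion you can then only place the limit $x_0$ in $\overline{\co}(N)$, which need not be contained in $A$ (nor even in $Y$), so upper semicontinuity of $T$ --- available only at points of $A$ --- cannot be invoked at $x_0$. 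Note that in your version the crucial feature ``the active base points $a_i$ lie in $x_U+U$'', which is what forces $x_0\in N\subseteq A$, is an artifact of the ill-defined construction. The standard repair, which is essentially Himmelberg's own proof, keeps the polytope $P_U$ but uses a \emph{set-valued} approximation: apply Kakutani's theorem on $P_U$ to the map $x\mapsto\bigl(T(x)+\overline{U}\bigr)\cap P_U$, which has nonempty closed convex values and closed graph. Its fixed point satisfies $x_U\in T(x_U)+\overline{U}$, so there is $y_U\in T(x_U)\subseteq N$ with $x_U-y_U\in\overline{U}$; compactness of $N$ gives a subnet $y_U\to x_0\in N\subseteq A$, hence $x_U\to x_0$ as well, and your closing argument (upper semicontinuity plus closed convex values) then yields $x_0\in T(x_0)$ --- with no need for the completion or Krein's theorem.
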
  

We will also need the following selection theorem due to Michael which can be found in \cite[Theorem 3.1''']{Michael}.

\begin{theorem}[Michael]\label{MST}
Every lower semicontinuous set-valued map $\Phi$ from a metric space to $\R^n$ with non-empty and convex values admits a continuous selection. This means that there exists a continuous function $h$, with the same domain as $\Phi$, such that the graph of $h$ is included in the graph of  $\Phi$.
\end{theorem}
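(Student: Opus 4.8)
The plan is to prove Michael's theorem by the classical method of building a uniformly convergent sequence of continuous \emph{approximate} selections whose limit is an honest selection. Write $d$ for the Euclidean distance on $\R^n$ and, for $\varepsilon>0$, call a continuous map $g$ an $\varepsilon$-approximate selection of a set-valued map $\Psi$ if $d(g(x),\Psi(x))<\varepsilon$ for every $x$ in the domain. The whole argument rests on two facts about the domain $M$: being a metric space, $M$ is paracompact, so every open cover admits a locally finite continuous partition of unity subordinate to it; and the intersection of an lsc convex-valued map with an open ball of continuously varying centre is again lsc with convex values on the set where it is non-empty.

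First I would establish the approximation step: for every lsc map $\Psi\colon M\tos\R^n$ with non-empty convex values and every $\varepsilon>0$ there is an $\varepsilon$-approximate selection. To build it, for each $x\in M$ choose a point $y_x\in\Psi(x)$; lower semicontinuity (in the ``by sets'' formulation, equivalently that $\{x':\Psi(x')\cap V\neq\emptyset\}$ is open for every open $V$) makes $U_x:=\{x':\Psi(x')\cap B(y_x,\varepsilon)\neq\emptyset\}$ an open neighbourhood of $x$. The family $\{U_x\}$ covers $M$; take a locally finite partition of unity $(p_\lambda)$ subordinate to it, pick for each index a centre $y_\lambda=y_{x_\lambda}$ with $\operatorname{supp} p_\lambda\subseteq U_{x_\lambda}$, and set $g(x)=\sum_\lambda p_\lambda(x)\,y_\lambda$. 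This $g$ is continuous, and at any $x$ each active term satisfies $d(y_\lambda,\Psi(x))<\varepsilon$; since $\Psi(x)$ is convex, the same bound passes to the convex combination, giving $d(g(x),\Psi(x))<\varepsilon$.

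Next I would iterate. The approximation step applied to $\Phi$ itself with $\varepsilon=2^{-1}$ produces a first map $f_1$; assume a $2^{-n}$-approximate selection $f_n$ has been constructed. Define $\Phi_n(x)=\Phi(x)\cap B(f_n(x),2^{-n})$. This is non-empty for every $x$ because $d(f_n(x),\Phi(x))<2^{-n}$, it is convex, and by the second fact above it is lsc on all of $M$. Applying the approximation step to $\Phi_n$ with $\varepsilon=2^{-(n+1)}$ yields $f_{n+1}$ with $d(f_{n+1}(x),\Phi_n(x))<2^{-(n+1)}$, hence also $d(f_{n+1}(x),\Phi(x))<2^{-(n+1)}$; moreover the nearby point of $\Phi_n(x)$ lies within $2^{-n}$ of $f_n(x)$, so $\|f_{n+1}(x)-f_n(x)\|<2^{-n}+2^{-(n+1)}$. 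Since $\sum_n(2^{-n}+2^{-(n+1)})$ converges, the sequence $(f_n)$ is uniformly Cauchy, so it converges uniformly to a continuous map $f$, and $d(f(x),\Phi(x))\to 0$ forces $f(x)\in\Phi(x)$.

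I expect the approximation step to be the main obstacle, since it is where paracompactness of the metric domain is invoked and where convexity of the values is used essentially to keep the partition-of-unity average close to the set. A secondary technical point, which I would isolate as a small lemma, is the preservation of lower semicontinuity under intersection with the moving ball $B(f_n(x),2^{-n})$. A final point of care is the closing membership $f(x)\in\Phi(x)$: it follows from $d(f(x),\Phi(x))=0$ once the values are closed, so in the finite-dimensional setting of the statement this is the step where one must verify that the uniform limit does not escape to the boundary of the values.
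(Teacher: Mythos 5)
Your argument is the classical successive-approximation proof of Michael's theorem, and it is correct \emph{under the additional hypothesis that each value $\Phi(x)$ is closed}: the approximation step (paracompactness of metric spaces, a partition of unity, convexity of $y\mapsto d(y,\Phi(x))$), the lemma on intersecting with a moving open ball, and the uniform Cauchy estimate are all fine. The gap is exactly the point you flag in your final sentence and then leave unverified: the statement to be proved (this is \cite[Theorem 3.1''']{Michael}, quoted by the paper in this generality on purpose) does \emph{not} assume closed values, and without closedness the implication $d(f(x),\Phi(x))=0\Rightarrow f(x)\in\Phi(x)$ fails, so your scheme only produces a selection of $x\mapsto\overline{\Phi(x)}$. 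This cannot be patched within the same scheme. Concretely, take $M$ arbitrary and $\Phi\equiv(0,1)\subset\R$: then $f_n\equiv 2^{-n}$ is a legitimate run of your construction --- $f_{n+1}\equiv 2^{-(n+1)}$ is even an exact selection of $\Phi_n(x)=(0,1)\cap B(2^{-n},2^{-n})=(0,2^{-n+1})$, and $\|f_{n+1}-f_n\|<2^{-n}+2^{-(n+1)}$ --- yet its uniform limit $0$ lies outside every $\Phi(x)$. A structural symptom of the same problem: your proof never uses that the target is $\R^n$ (every step works verbatim with a Banach space as range), whereas the selection theorem for \emph{non-closed} convex values is known to fail in infinite-dimensional range spaces; hence no argument that ignores finite-dimensionality can prove the stated theorem.

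The generality is not pedantic for this paper: Michael's theorem enters through Corollary \ref{FPTLSC}, which is applied in Theorem \ref{Main-result-3} and Theorem \ref{main-result-4} to maps whose values are typically \emph{not} closed, e.g. $R(x,z)=F(z)\cap K(x)$ with $F(z)=\{y:\ f(z,y)<0\}$ defined by a strict inequality, or $\co(G)(x,z)$; so one cannot simply add closedness to the hypotheses. A correct proof of the finite-dimensional non-closed case needs genuinely different ideas: Michael's own argument proceeds by induction on the dimension of the values, exploiting that a non-empty convex subset of $\R^n$ has non-empty relative interior which is dense in the set (whence $x\mapsto\mathrm{ri}\,\Phi(x)$ is again lsc with convex values) and that metric spaces are perfectly normal. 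If you wish to keep the approximation scheme, the repair must force the limit into the relative interior of $\Phi(x)$ at every stage, and that is precisely where these extra tools --- and the finite dimension of the range --- are used essentially; the Cauchy estimate alone cannot deliver it.
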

 
As a consequence of the two previous theorems, one can deduce the following result in a similar way to part of the proof of Theorem 2.1 in \cite{Cubio}.

\begin{corollary}\label{FPTLSC}
Given a non-empty, convex and closed subset $C$ of $\R^n$, if $\Phi:C \tos C$ is lsc with non-empty, convex values and $\Phi(C)$ is relatively compact, then $\Fix(\Phi)$ is a non-empty set.
\end{corollary}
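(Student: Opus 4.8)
The plan is to combine the two preceding theorems: first use Michael's selection theorem to replace the set-valued map $\Phi$ by a continuous single-valued selection, and then apply Himmelberg's fixed point theorem to that selection. To begin, note that $C$, being a subset of $\R^n$, is a metric space, and that $\Phi$ takes non-empty convex values in $\R^n$. Hence Theorem~\ref{MST} applies and yields a continuous selection $h:C\to\R^n$ of $\Phi$, i.e.\ a continuous map with $h(x)\in\Phi(x)$ for every $x\in C$. Since $\Phi(x)\subseteq C$ by hypothesis, this selection in fact takes values in $C$, so that $h:C\to C$ is a continuous self-map.

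Next I would set $N:=\overline{\Phi(C)}$ and verify the hypotheses of Theorem~\ref{HFPT} for $h$, regarded as a set-valued map with singleton (hence convex, closed and non-empty) values. Indeed, $C$ is a non-empty convex subset of the Hausdorff locally convex space $\R^n$, and a continuous single-valued map is automatically usc. The relative compactness of $\Phi(C)$ gives that $N$ is compact, while the closedness of $C$ gives $N=\overline{\Phi(C)}\subseteq\overline{C}=C$, so that $N$ is a compact subset of $C$. Finally $h(C)\subseteq\Phi(C)\subseteq N$, which is precisely the containment required by Himmelberg's theorem.

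With these verifications in place, Theorem~\ref{HFPT} produces a point $x^\ast\in C$ with $x^\ast=h(x^\ast)$, and since $h(x^\ast)\in\Phi(x^\ast)$ this gives $x^\ast\in\Phi(x^\ast)$, that is, $x^\ast\in\Fix(\Phi)$. The only point requiring real care is ensuring that the compact set $N$ fed into Himmelberg's theorem actually lies inside $C$; this is exactly where the closedness of $C$ enters, and it is the step I would check most carefully, since the relative compactness hypothesis on $\Phi(C)$ by itself only controls the closure taken in $\R^n$.
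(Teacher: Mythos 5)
Your proof is correct and follows essentially the same route the paper intends: the paper derives Corollary~\ref{FPTLSC} precisely by combining Michael's selection theorem (Theorem~\ref{MST}) with Himmelberg's fixed point theorem (Theorem~\ref{HFPT}), applying the latter to a continuous selection of $\Phi$. Your additional care in checking that $N=\overline{\Phi(C)}$ lies in $C$ (using the closedness of $C$) is exactly the right point to verify, and your argument there is sound.
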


We now recall some  different definitions of \emph{generalized monotonicity} (which we use throughout this article). 

A set-valued map $T:\R^n\tos \R^n$ is said to be:
\begin{itemize}
\item \emph{pseudomonotone} on a subset $C$ of $\R^n$ if, for all $x,y\in C$ and any $x^*\in T(x)$, $y^*\in T(y)$, the following implication holds
\[
\langle x^*,y-x\rangle\geq0~\Rightarrow~\langle y^*,y-x\rangle\geq0,
\]
\item \emph{quasimonotone} on a subset $C$ of $\R^n$ if, for all $x,y\in C$ and any $x^*\in T(x)$, $y^*\in T(y)$, the following implication holds 
\[
\langle x^*,y-x\rangle>0~\Rightarrow~\langle y^*,y-x\rangle\geq0,
\]
\item \emph{properly quasimonotone} on a convex subset $C$ of $\R^n$ if, for all $x_1,\dots,x_m\in C$ and $x\in\co(\{x_1,\dots,x_m\})$ (the convex hull), there exists $i$ such that 
\[
\langle x_i^*,x-x_i\rangle\leq0,~\forall x_i^*\in T(x_i).
\]
\end{itemize}
In a similar way, a given bifunction $f:\R^n\times \R^n\rightarrow \R$ is said to be:
\begin{itemize}
\item \emph{pseudomonotone} on a subset $C$ of $\R^n$ if, for all $x,y\in C$, the following implication holds
\[
f(x,y)\geq0\Rightarrow f(y,x)\leq0;
\]
\item \emph{quasimonotone} on a subset $C$ of $\R^n$ if, for all $x,y\in C$, the following implication holds
\[
f(x,y)>0\Rightarrow f(y,x)\leq0;
\]
\item \emph{properly quasimonotone} on a convex subset $C$ of $\R^n$ if, for all $x_1,\dots, x_m\in C$, and all 
$x\in \co(\{ x_1,\dots,x_m\} )$, there exists $i$ such that
\[
f(x_i,x)\leq0.
\]
\end{itemize}
When $C=\R^n$, we  only say that a set-valued map (or a bifunction) is pseudomonotone (quasimonotone or properly quasimonotone) instead of saying that it is pseudomontone (quasimonotone or properly quasimonotone) on $\R^n$. 

In the case of set-valued maps, pseudomonotonicity implies proper quasimonotonicity, which implies quasimonotonicity.
However, for bifunctions,  pseudomonotonicity implies proper quasimonotonicity, provided that the bifunction is quasiconvex with respect to its 
second argument (see \cite[Proposition 1.1]{BP01}). Moreover, no relationship exists between quasimonotonicity and 
proper quasimonotonicity of bifunctions (see the counter-examples in \cite{BP01}).

It is very well-known that a set-valued map $T:\R^n\tos \R^n$,  with non-empty and compact values, satisfies some generalized monotonicity if and only if, its bifunction $f_T$, defined as in \eqref{T-f}, does too. In  a similar spirit, we have the following result which is easy to check.

\begin{proposition}\label{S2-uno}
Let $T:\R^n\tos \R^n$ be a set-valued map with compact values. If $-T$ is pseudomonotone, then $-f_T$ is too.
\end{proposition}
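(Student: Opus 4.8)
The plan is to unwind both notions of pseudomonotonicity in terms of the selections of the map $-T$ and then match them directly. Writing $S=-T$, the first step is the elementary identity, valid for all $x,y\in\R^n$,
$$-f_T(x,y)=-\sup_{x^*\in T(x)}\langle x^*,y-x\rangle=\inf_{x^*\in T(x)}\langle -x^*,y-x\rangle=\inf_{s^*\in S(x)}\langle s^*,y-x\rangle.$$
Since $T$ has non-empty compact (hence bounded) values, each linear functional $s^*\mapsto\langle s^*,y-x\rangle$ attains a finite infimum on $S(x)$, so $-f_T$ is a genuine real-valued bifunction and the statement is well posed.

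Next I would fix $x,y\in\R^n$ and assume $-f_T(x,y)\geq0$. By the displayed identity this is equivalent to saying that $\langle s^*,y-x\rangle\geq0$ for \emph{every} $s^*\in S(x)$. Choosing any single $s^*\in S(x)$, which is possible because the values are non-empty, and invoking the hypothesis that $S=-T$ is pseudomonotone, the implication $\langle s^*,y-x\rangle\geq0\Rightarrow\langle t^*,y-x\rangle\geq0$ then yields $\langle t^*,y-x\rangle\geq0$ for every $t^*\in S(y)$. Consequently $\langle t^*,x-y\rangle\leq0$ for all $t^*\in S(y)$, and therefore
$$-f_T(y,x)=\inf_{t^*\in S(y)}\langle t^*,x-y\rangle\leq0.$$
This is precisely the pseudomonotonicity implication $-f_T(x,y)\geq0\Rightarrow -f_T(y,x)\leq0$ for the bifunction $-f_T$, completing the argument.

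I expect no serious obstacle here, in keeping with the remark that the result is easy to check; the only point that requires care is the interaction of the negation with the supremum. It is exactly this that converts $-f_T$ into an infimum over $S=-T$ and thereby turns the ``for all selections'' reading of the hypothesis $-f_T(x,y)\geq0$ into the existence of a single selection to which set-valued pseudomonotonicity can be applied, after which the conclusion propagates to all of $S(y)$. Compactness of the values is used solely to guarantee that the extrema are finite and attained, ensuring that $f_T$ and $-f_T$ are well-defined real-valued bifunctions.
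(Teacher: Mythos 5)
Your proof is correct: the identity $-f_T(x,y)=\inf_{s^*\in (-T)(x)}\langle s^*,y-x\rangle$ converts the hypothesis $-f_T(x,y)\geq 0$ into the premise of set-valued pseudomonotonicity of $-T$, and the conclusion then passes to the infimum over $(-T)(y)$ exactly as needed. The paper offers no proof (it calls the result ``easy to check''), and your argument is precisely the direct verification it has in mind, with the right care taken in turning the negated supremum into an infimum over $-T$ rather than $f_{-T}$.
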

\begin{remark}
The previous proposition is also true in Banach spaces. In this case we can use  weak$^*$-compactness instead of regular compactness.
\end{remark}
The converse of the previous result does not hold in general, as the following example shows.
\begin{example}
Let $T:\R\tos\R$ be a set-valued map defined by
\[
T(x)=\{-1,1\}, \mbox{ for all }x\in\R.
\]
Clearly, $-T$ is not pseudomonotone but $-f_T$ is pseudomonotone, because $f_T\geq0$ and it only vanishes on the diagonal of $\R\times\R$.
\end{example}

Another important concept is the upper sign condition, which is given first for set-valued maps and later for bifunctions. Let $C$ be a convex subset of $\R^n$. For a given $t \in \R$ and $x,y \in \R^n$, let $x_t=(1-t)x+ty$.
\begin{itemize}
\item  A set-valued map $T:\R^n\tos \R^n$ is said to be \emph{upper sign-continuous} on $C$ if, for all 
$x,y\in C$, 
the following implication holds
\begin{align*}
\left( \forall t\in]0,1[,~\inf_{x_t^*\in T(x_t)}\langle x_t^*,y-x\rangle\geq0\right)~\Rightarrow~\sup_{x^*\in T(x)}\langle x^*,y-x\rangle\geq0 .
\end{align*}

\item  A bifunction $f:\R^n\times \R^n\to\R$ is said to have  the 
\emph{upper sign property} on $C$ if, for all $x\in C$ and for every $y\in C$, 
the following implication holds
\begin{align*}
\bigl(
f(x_t,x)\leq0,~
\forall~ t\in\,]0,1[~
\bigr)
\Rightarrow ~ f(x,y) \geq0.
\end{align*}
\end{itemize}

Upper sign-continuity (\cite{H03}) is a very weak notion of continuity. For instance, any upper semicontinuous set-valued map is upper sign-continuous.
Moreover, any positive function on $\R$ is upper sign-continuous.  This notion plays an important role for proving the existence of
solutions of  variational inequalities and quasi-variational inequalities, see \cite{AH04,AC-2013}. In a similar spirit,
the upper sign property plays an important role in order to establish the existence of solutions of equilibrium problems and quasi-equilibrium problems, see \cite{castellani2012,ACI}.

For the sake of completeness, let us
recall also that a function $h:\R^n\to\R$  is said to be:
\begin{itemize}
 \item \emph{convex} if, for any $x,y\in \R^n$ and $t\in[0,1]$,
we have 
\[
h(x_t)\leq (1-t)h(x)+th(y);
\]
 \item \emph{quasiconvex} if, for any $x,y\in \R^n$ and $t\in[0,1]$,
we have
\[
h(x_t)\leq \max\{h(x),h(y)\}.
\]
\item \emph{semistrictly quasiconvex} if, it is quasiconvex and, for any $x,y\in \R^n$ such that $h(x)\neq h(y)$, 
the following holds
\[
h(x_t)< \max\{h(x),h(y)\}\mbox{, for all }t\in]0,1[.
\]
\end{itemize}
Clearly, every convex function is semistrictly quasiconvex. 
An equivalent and useful characterization of quasiconvexity is that the function $f$
is quasiconvex if and only if, its sublevel set $S_\lambda=\{x\in \R^n:h(x)\leq\lambda\}$ is convex, for all $\lambda\in\R$.

\section{Canonical relations }\label{characterization}

John, in \cite{John2001}, characterized the proper quasimonotonicity of set-valued maps by the non-emptiness of the solution set 
of Minty variational inequality problems associated to this set-valued map on compact sets. 
Bianchi and Pini established a similar result for bifunctions under lower semicontinuity and 
quasiconvexity, see \cite[Theorem 2.1]{BP01}.

In a similar way to  \cite[Theorem 2 and Corollary of Theorem 1]{John2001}, the next result
characterizes quasimonotonicity and pseudomonotonicity. Denote by $\EP(f,C)$ and $\CFP(f,C)$ the solution sets of the equilibrium problem and Minty equilibrium problem, respectively.

\begin{proposition}\label{pseudo-EP-CFP}
Let $f:\R^n\times \R^n\to\R$ be a bifunction. Then, the following hold
\begin{enumerate}
\item $f$ is quasimonotone if and only if, $\CFP(f,\{x,y\})\neq\emptyset$, for all $x,y\in \R^n$.
\item $f$ is pseudomonotone if and only if, $\EP(f,C)\subset \CFP(f,C)$, for every subset $C$ of $\R^n$.
\item If $-f$ is pseudomonotone, then $\CFP(f,C)\subset\EP(f,C)$, for every subset $C$ of $\R^n$. The converse holds
provided that $f$ vanishes on the diagonal of $\R^n\times \R^n$.
\end{enumerate}
\end{proposition}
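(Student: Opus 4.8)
The plan is to prove all three parts by directly unwinding the definitions against carefully chosen finite test sets, namely the two-point sets $\{x,y\}$ and the singletons $\{x\}$; no fixed-point machinery is needed here. Throughout I would exploit one elementary but crucial observation about the diagonal: testing a monotonicity condition at $y=x$ controls the sign of $f(x,x)$. For instance, quasimonotonicity applied to the pair $(x,x)$ reads ``$f(x,x)>0\Rightarrow f(x,x)\le 0$'', which is contradictory, so quasimonotonicity already forces $f(x,x)\le 0$ for every $x$; the same test shows that pseudomonotonicity forces $f(x,x)\le 0$ as well.

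For part (1) I would argue as follows. In the forward direction, assume $f$ is quasimonotone and fix $x,y$. By the diagonal observation $f(x,x)\le 0$ and $f(y,y)\le 0$. Quasimonotonicity rules out having both $f(x,y)>0$ and $f(y,x)>0$, so at least one is $\le 0$: if $f(y,x)\le 0$ then $x$ solves the Minty problem on $\{x,y\}$ (because also $f(x,x)\le 0$), and if $f(x,y)\le 0$ then $y$ does. Either way $\CFP(f,\{x,y\})\neq\emptyset$. For the converse I would argue by contraposition: if $f$ fails to be quasimonotone there are $x,y$ with $f(x,y)>0$ and $f(y,x)>0$, and then neither $x$ nor $y$ can be a Minty solution on $\{x,y\}$, so $\CFP(f,\{x,y\})=\emptyset$.

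Parts (2) and (3) follow the same template but are even shorter in the ``easy'' direction. For (2), if $f$ is pseudomonotone and $x\in\EP(f,C)$ then $f(x,y)\ge 0$ for all $y\in C$, and pseudomonotonicity turns this into $f(y,x)\le 0$ for all $y\in C$, i.e. $x\in\CFP(f,C)$; this needs no restriction on $C$. The converse is handled on the test set $C=\{x,y\}$: from $f(x,y)\ge0$ one wants to place $x$ in $\EP(f,\{x,y\})$ and then read off $f(y,x)\le 0$ from the assumed inclusion. Part (3) is the mirror image, working with $-f$ (so that $-f$ pseudomonotone reads $f(x,y)\le0\Rightarrow f(y,x)\ge0$): the forward inclusion $\CFP(f,C)\subset\EP(f,C)$ is immediate, since for $x\in\CFP(f,C)$ each $y\in C$ gives $f(y,x)\le0$ and hence $f(x,y)\ge0$, and for the stated converse one uses $C=\{x,y\}$ to put $y$ into $\CFP(f,\{x,y\})$, hence into $\EP(f,\{x,y\})$, yielding $f(y,x)\ge 0$.

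I expect the only real obstacle to be the diagonal values, and this is precisely where the hypothesis that $f$ vanishes on the diagonal enters the converse directions. The subtlety is that membership in $\EP(f,\{x,y\})$ requires not only $f(x,y)\ge0$ but also $f(x,x)\ge0$, while membership in $\CFP(f,\{x,y\})$ requires $f(y,y)\le 0$; without control of these diagonal terms the chosen test point may fail to be a solution and the argument stalls. Indeed, if the diagonal is strictly negative then $\EP(f,C)$ is empty for every $C$, so the inclusion in (2) becomes vacuous yet $f$ need not be pseudomonotone, which shows that the converse of (2) equally relies on the diagonal being zero (the standing convention for equilibrium bifunctions, singled out explicitly in (3)). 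Assuming $f(x,x)=0$ supplies exactly the missing inequalities $f(x,x)\ge 0$ and $f(y,y)\le 0$, making every test point land in the appropriate solution set. In the forward directions no such assumption is needed, because there the monotonicity hypothesis itself already pins down the sign of the diagonal.
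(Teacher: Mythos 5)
Your proposal is correct, and for the two parts the paper actually proves (1 and 3) it is essentially the paper's own argument: your part 3 is word for word the paper's proof (forward inclusion by applying pseudomonotonicity of $-f$ to each $y\in C$; converse via $C=\{x,y\}$, using $f(y,y)=0$ to place $y$ in $\CFP(f,\{x,y\})$ and hence in $\EP(f,\{x,y\})$), and your part 1 is the same contraposition, except that you track the diagonal terms explicitly where the paper simply declares $\CFP(f,\{x,y\})=\emptyset$ to be ``equivalent'' to $f(x,y)>0$ and $f(y,x)>0$ --- an equivalence that does not hold pointwise for a fixed pair (emptiness can also be caused by a positive diagonal value); your observation that quasimonotonicity itself forces $f(x,x)\le 0$ is exactly what makes the forward direction rigorous, so on this part your write-up is the more careful one. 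The genuine divergence is part 2: the paper offers no argument, citing it as a ``straightforward adaptation'' of John's Theorem 2, a variational-inequality result in which the representative bifunction $f_T$ of \eqref{T-f} automatically satisfies $f_T(x,x)=0$, so the diagonal obstruction never arises there. Your direct attempt exposes precisely what that adaptation glosses over: to run the converse on $C=\{x,y\}$ you must put $x$ into $\EP(f,\{x,y\})$, which requires $f(x,x)\ge 0$ in addition to $f(x,y)\ge 0$. Your vacuous-inclusion counterexample is valid --- take $f\equiv -1$ on the diagonal and $f\equiv 1$ off it; then $\EP(f,C)=\emptyset$ for every $C$, so the inclusion in (2) holds trivially, while $f$ is not pseudomonotone --- and it shows that the converse in part 2 is false as literally stated. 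So your conclusion that the converse of (2), like that of (3), requires the vanishing-diagonal hypothesis is a correct refinement of the proposition rather than a defect of your proof; under that hypothesis your two-point-set argument for (2) closes without difficulty.
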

\begin{proof}
\begin{enumerate}
\item It follows from the fact that $f$ is not quasimonotone if and only if, there exists $x,y\in \R^n$ such that 
$ f(x,y)>0$ and $f(y,x)>0$, which is equivalent to $\CFP(f,\{x,y\})=\emptyset$.
\item It is a straightforward adaptation of \cite[Theorem 2]{John2001}.
\item Let $x\in \CFP(f,C)$,  that means $f(y,x)\leq0$, for all  $y\in C$.
By pseudomonotonicity of $-f$, we have $f(x,y)\geq0$. Hence, $x\in\EP(f,C)$.

Conversely, let $x,y\in \R^n$ such that $-f(x,y)\geq 0$. We take $C=\{x,y\}$
and since $f(y,y)=0$, we have  $y\in \CFP(f,C)$. Thus,
$f(y,x)\geq0$ or equivalently $-f(y,x)\leq0$. 
\end{enumerate}
\end{proof}

The following example says that in part 3 of Proposition \ref{pseudo-EP-CFP}, the reciprocal does not hold in general.
\begin{example}
The bifunction $f:\R\times\R\to\R$ defined as follows
\[
f(x,y)=\left\lbrace\begin{array}{cl}
-1,&\mbox{if }(x,y)=(0,1)\\
1,&\mbox{if }(x,y)=(0,0)\\
0,&\mbox{otherwise}
\end{array}\right.
\]
satisfies that $\CFP(f,C)\subset\EP(f,C)$, for every subset $C$ of $\R$. However, $-f$ is not pseudomonotone.
\end{example}

It was shown in \cite[Proposition 3.1]{ACI} that under the upper sign property, the inclusion in part 3 of Proposition \ref{pseudo-EP-CFP} holds. The next two propositions show that pseudomonotonicity and  the upper sign property are related under
suitable assumptions.

\begin{proposition}\label{pseudo-upper-sign}
Let $C$ be a convex subset of $\R^n$ and $f:\R^n\times \R^n\to\R$ be a bifunction such that
one of the following assumptions holds
\begin{enumerate}
 \item $f(\cdot,y)$ is lower semicontinuous, for all $y\in C$;
 \item $f(x,\cdot)$ is upper semicontinuous, for all $x\in C$;
 \item $f$ vanishes on the diagonal of $C$ and $-f(\cdot,y)$ is semistrictly quasiconvex, for all $y\in C$; 
 \item $f$ vanishes on the diagonal of $C$ and $f(x,\cdot)$ is semistrictly quasiconvex, for all $x\in C$.
\end{enumerate}
If $-f$ is pseudomonotone, then $f$ has the upper sign property on $C$.
\end{proposition}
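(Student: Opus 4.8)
The plan is to fix $x,y\in C$, assume the antecedent of the upper sign property, namely $f(x_t,x)\le0$ for every $t\in\,]0,1[$ with $x_t=(1-t)x+ty$, and derive $f(x,y)\ge0$. Since $C$ is convex, every $x_t$ lies in $C$, so I may freely use the hypotheses together with the pseudomonotonicity of $-f$, which reads $f(a,b)\le0\Rightarrow f(b,a)\ge0$ for all $a,b\in C$. I would first dispose of the trivial case $x=y$: there $x_t\equiv x$, the antecedent gives $f(x,x)\le0$, and either the diagonal vanishes (cases~3 and~4) or pseudomonotonicity applied to $a=b=x$ forces $f(x,x)\ge0$ (cases~1 and~2); in all cases $f(x,y)=f(x,x)\ge0$. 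From now on I assume $x\ne y$, and I observe that the four hypotheses split into two groups according to whether the regularity is carried by the first or the second argument of $f$.

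For the cases in which the first argument is regular (cases~1 and~3) I would aim at the auxiliary inequality $f(y,x)\le0$, after which a single application of pseudomonotonicity of $-f$ to the pair $(y,x)$ yields $f(x,y)\ge0$. In case~1, since $f(\cdot,x)$ is lower semicontinuous and $x_t\to y$ as $t\to1^-$ along a path on which $f(\cdot,x)\le0$, lower semicontinuity gives $f(y,x)\le\liminf_{t\to1^-}f(x_t,x)\le0$. In case~3, I set $g=-f(\cdot,x)$, which is semistrictly quasiconvex and satisfies $g(x)=0$ (diagonal) and $g(x_t)\ge0$ on $]0,1[$; if $f(y,x)>0$, i.e. $g(y)<0$, then $g(x)\ne g(y)$, so semistrict quasiconvexity would force $g(x_t)<\max\{g(x),g(y)\}=0$, contradicting $g(x_t)\ge0$, and again $f(y,x)\le0$.

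For the cases in which the second argument is regular (cases~2 and~4) I would reverse the order: first apply pseudomonotonicity of $-f$ to each pair $(x_t,x)$ to obtain $f(x,x_t)\ge0$ for all $t\in\,]0,1[$, which now places $x$ in the first slot so that the regularity of $f(x,\cdot)$ becomes usable. In case~2, upper semicontinuity of $f(x,\cdot)$ along $x_t\to y$ gives $f(x,y)\ge\limsup_{t\to1^-}f(x,x_t)\ge0$. In case~4, the function $h=f(x,\cdot)$ is semistrictly quasiconvex with $h(x)=0$ and $h(x_t)\ge0$, and the same contradiction argument as in case~3 rules out $h(y)<0$, giving $f(x,y)=h(y)\ge0$.

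The routine parts are the convexity bookkeeping and the two pseudomonotonicity flips; the one place that demands care is matching the direction of each semicontinuity or semistrict-quasiconvexity inequality with the sign produced by the flip, since a slip there reverses the conclusion. In particular I must apply the pseudomonotonicity flip \emph{before} the regularity argument exactly when the regular variable sits in the second slot, and \emph{after} it when it sits in the first; this alignment is the structural heart of the proof, whereas the limiting estimates and the quasiconvexity contradictions themselves are short.
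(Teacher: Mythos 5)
Your proof is correct and follows essentially the same route as the paper's: the same case-by-case treatment, the same semicontinuity limit arguments in cases~1 and~2, the same semistrict-quasiconvexity contradictions in cases~3 and~4, and the same placement of the pseudomonotonicity flip before or after the regularity step. The only differences are cosmetic — your explicit (and harmless, though unnecessary) treatment of $x=y$, and a contrapositive reorganization of cases~3 and~4 that the paper states as a direct contradiction argument.
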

\begin{proof}
Let $x$ and $y$ be two elements of $C$ such that 
\begin{align}\label{upper-condition}
 f(x_t,x)\leq0,\mbox{ for all }t\in]0,1[.
\end{align}
 
\begin{enumerate}
 \item If $f(\cdot,y)$ is lower semicontinuous, then $f(y,x)\leq0$. Thus, the result follows from the pseudomonotonicity of $-f$.
 \item Since $-f$ is pseudomonotone, condition \eqref{upper-condition} implies that $f(x,x_t)\geq0$, for any $t\in]0,1[$.
By upper semicontinuity of $f(x,\cdot)$ we deduce that $f(x,y)\geq0$.
\item If $f(x,y)<0$, then $f(y,x)>0$ which in turn implies $f(x_t,x)>0$ for all $t\in]0,1[$, due to semistric quasiconvexity of $-f(\cdot,x)$. However, this fact is a contradiction
with \eqref{upper-condition}. Hence, $f(x,y)\geq0$.
\item Suppose that $f$ does not have the upper sign property on $C$. Thus, there exist $x$ and $y$ in $C$ such that $f(x,y)<0$ and \eqref{upper-condition} holds. Since $f(x,x)=0$ and $f(x,\cdot)$ is semistrictly quasicovex, we have $f(x,x_t)<0$, for all $t\in]0,1[$. Now, by pseudomonotonicity of $-f$, we obtain $f(x_t,x)>0$, for all $t\in]0,1[$, which is a contradiction.

\end{enumerate}
\end{proof}
An important consequence of the previous result is the following corollary.
\begin{corollary} 
Let $T:\R^n\tos \R^n$ be a set-valued map with compact and non-empty values. If $-T$ is pseudomonotone, then it is upper sign-continuous.
\end{corollary}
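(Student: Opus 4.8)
The plan is to reduce the assertion about the set-valued map $T$ to the corresponding assertion about its representative bifunction $f_T$, and then to chain Propositions \ref{S2-uno} and \ref{pseudo-upper-sign}. Since $T$ has non-empty compact values, the bifunction $f_T(x,y)=\sup_{x^*\in T(x)}\langle x^*,y-x\rangle$ is well defined and finite, with the supremum attained for every pair $(x,y)$, so all the extrema below make sense.

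First I would observe that upper sign-continuity of $T$ on $\R^n$ is exactly the upper sign property of $f_T$. Fix $x,y$ and set $x_t=(1-t)x+ty$ with $t\in]0,1[$. Since $x-x_t=-t(y-x)$ and $t>0$,
\[
f_T(x_t,x)=\sup_{x_t^*\in T(x_t)}\langle x_t^*,x-x_t\rangle=-t\inf_{x_t^*\in T(x_t)}\langle x_t^*,y-x\rangle,
\]
so the premise $f_T(x_t,x)\le0$ for all $t\in]0,1[$ is precisely $\inf_{x_t^*\in T(x_t)}\langle x_t^*,y-x\rangle\ge0$ for all such $t$. Likewise $f_T(x,y)\ge0$ means $\sup_{x^*\in T(x)}\langle x^*,y-x\rangle\ge0$. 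Hence the implication defining the upper sign property of $f_T$ coincides verbatim with the one defining upper sign-continuity of $T$.

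Next I would verify the hypotheses needed to invoke the earlier results on the convex set $C=\R^n$. By Proposition \ref{S2-uno}, pseudomonotonicity of $-T$ yields pseudomonotonicity of $-f_T$. To apply Proposition \ref{pseudo-upper-sign} it suffices to check one of its four conditions for $f_T$: here $f_T(x,x)=\sup_{x^*\in T(x)}\langle x^*,0\rangle=0$, so $f_T$ vanishes on the diagonal, and for fixed $x$ the map $y\mapsto f_T(x,y)$ is a supremum of affine functions, hence convex and therefore semistrictly quasiconvex. Thus assumption (4) holds, and Proposition \ref{pseudo-upper-sign} gives that $f_T$ has the upper sign property on $\R^n$; by the first step this is exactly the statement that $T$ is upper sign-continuous.

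The point that might at first look like an obstacle is that Proposition \ref{pseudo-upper-sign} demands one of four regularity conditions, which suggests one should assume some continuity of $T$. The resolution is that $f_T$ satisfies condition (4) automatically, by its convexity in the second argument together with $f_T(x,x)=0$ (condition (2) works equally well, since a finite convex function on $\R^n$ is continuous), so pseudomonotonicity of $-T$ alone suffices. The only computational care needed is the interchange $\sup_{x_t^*}(-t\langle x_t^*,y-x\rangle)=-t\inf_{x_t^*}\langle x_t^*,y-x\rangle$, which is legitimate because $t>0$ and, by compactness of the values of $T$, the relevant extrema are finite and attained.
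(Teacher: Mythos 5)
Your proof is correct and takes essentially the same route as the paper's: reduce upper sign-continuity of $T$ to the upper sign property of $f_T$, use Proposition \ref{S2-uno} to transfer pseudomonotonicity of $-T$ to $-f_T$, and then apply part 4 of Proposition \ref{pseudo-upper-sign}, noting that $f_T$ vanishes on the diagonal and is convex (hence semistrictly quasiconvex) in its second argument. The only difference is that you spell out the details the paper leaves implicit, namely the computation identifying the two upper sign conditions and the verification of condition 4.
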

\begin{proof}
It is enough to show that $f_T$, defined in \eqref{T-f}, has the upper sign property because from the definition of $f_T$, we have that $T$ is upper sign-continuous if and only if, $f_T$ has the upper sign property.
So, by Proposition \ref{S2-uno}, we have that $-f_T$ is pseudomonotone. Thus, $f_T$ has the upper sign property due to  part 4 of Proposition \ref{pseudo-upper-sign}.  
\end{proof}

\begin{remark}
The previous corollary is also true in Banach spaces. In that case, we can use weak$^*$-compactness instead of regular compactness.
\end{remark}

\begin{proposition}\label{upper-pseudo}
Let  $f:\R^n\times \R^n\to\R$ be a bifunction such that
the following assumptions hold
\begin{enumerate}
 \item $f$ vanishes on the diagonal of $\R^n\times \R^n$ and
 \item $f(\cdot,y)$ is quasiconvex for all $y\in \R^n$.
\end{enumerate}
If $f$ has the upper sign property on $\R^n$, then $-f$ is pseudomonotone.
\end{proposition}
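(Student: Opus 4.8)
The plan is to unwind the definition of pseudomonotonicity of $-f$ and reduce the statement to a single implication. By definition, $-f$ is pseudomonotone exactly when, for all $x,y\in\R^n$, the condition $(-f)(x,y)\geq0$ forces $(-f)(y,x)\leq0$; rewriting this in terms of $f$, I must show that $f(x,y)\leq0$ implies $f(y,x)\geq0$. So I would fix $x,y\in\R^n$ with $f(x,y)\leq0$ and aim to produce $f(y,x)\geq0$.

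The idea is to feed this into the upper sign property, but with the roles of the two points interchanged, since the conclusion I want has $y$ in the first slot and $x$ in the second. Reading the upper sign property with $y$ in the place of its ``$x$'' and $x$ in the place of its ``$y$'', the relevant convex combination $x_t$ becomes $(1-t)y+tx$, and the hypothesis I need to verify is $f\big((1-t)y+tx,\,y\big)\leq0$ for every $t\in\,]0,1[$. This is where the two standing assumptions enter: since $f$ vanishes on the diagonal, $f(y,y)=0$, and since $f(\cdot,y)$ is quasiconvex, I obtain $f\big((1-t)y+tx,\,y\big)\leq\max\{f(y,y),f(x,y)\}=\max\{0,f(x,y)\}=0$, using $f(x,y)\leq0$. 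Thus the segment hypothesis holds for all $t\in[0,1]$, in particular on $]0,1[$.

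Having verified the antecedent, the upper sign property applied to the pair $(y,x)$ yields $f(y,x)\geq0$, which is exactly what pseudomonotonicity of $-f$ demands. I expect no serious obstacle here; the only point requiring care is the bookkeeping of which variable plays the role of the ``first argument'' in the upper sign property, together with the observation that the quasiconvexity hypothesis is imposed precisely on $f(\cdot,y)$ — the slot along which the relevant segment moves — so that it dovetails exactly with the diagonal-vanishing condition to supply the antecedent. Equivalently, one may phrase the middle step through the sublevel set $\{z:f(z,y)\leq0\}$, which is convex by quasiconvexity and contains both $x$ and $y$, hence contains the whole segment joining them.
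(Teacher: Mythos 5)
Your proof is correct and follows essentially the same route as the paper's: quasiconvexity of $f(\cdot,y)$ together with $f(y,y)=0$ gives the segment condition $f\bigl((1-t)y+tx,\,y\bigr)\leq0$, and the upper sign property applied with the roles of the two points swapped yields $f(y,x)\geq0$. The only cosmetic difference is that the paper phrases the argument as a proof by contradiction, while you argue directly.
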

\begin{proof}
Let $x$ and $y$ be two elements of $\R^n$ such that 
\begin{align}\label{no-pseudo}
f(x,y)\leq0~\mbox{ and }~ f(y,x)<0.
\end{align}
By quasiconvexity we obtain $f(x_t,y)\leq0$, for all $t\in]0,1[$. We now apply the upper sign property of $f$ and deduce that $f(y,x)\geq0$, which is a contradiction
with \eqref{no-pseudo}.
\end{proof}

\begin{remark} A few remarks about the previous results are given below. 
\begin{itemize}
\item A bifunction satisfying condition 3 in Proposition \ref{pseudo-upper-sign} is actually properly quasimonotone, due to \cite[Proposition 1.1]{BP01}.

\item In general, the upper sign property of $f$ and pseudomonotonicity of $-f$ are independents. Consider for instance   the bifunctions $f_1, f_2:\R\times\R\to\R$, defined by
\[
 f_1(x,y)=\left\lbrace\begin{array}{cl}
                     1,&\mbox{if }x\in\Q,~y\notin\Q\mbox{ or }x\notin\Q,~y\in\Q\\
                   0,&\mbox{if }x=y\\
                   -1,&\mbox{otherwise}
                  \end{array}
\right.
\]
and
\[
 f_2(x,y)=\left\lbrace\begin{array}{cl}
                     -x,&\mbox{if }y=0, x>0\\
                     y,&\mbox{if }x=0, y>0\\
                     0,&\mbox{otherwise}.
                    \end{array}
\right.
\]
Clearly, $f_1$ has the upper sign property and $-f_2$ is pseudomonotone,  but neither $-f_1$ is  pseudomonotone nor   $f_2$ has the upper sign property.

\item The bifunction $f_2$ also shows that the 
quasiconvexity assumption in Proposition \ref{upper-pseudo} can not be dropped.
\end{itemize}
\end{remark}

\section{Main results}\label{PS-QEP}

Any solution of \eqref{QEP} (or \eqref{MQEP}) will be called a ``\emph{classical solution}''.  
From now on, we denote the Euclidean norm of $\R^n$ by $\|\cdot\|$.

\begin{definition}
Given a non-empty subset $C$ of $\R^n$, a set-valued map $K:C\tos \R^n$ and  a 
bifunction $f:\R^n\times \R^n\rightarrow \R$, a point $x_0$ of $C$ is called a \emph{projected solution}
of:
\begin{itemize}
\item the QEP if, there exists $z_0\in\mbox{EP}(f,K(x_0))$ such that $x_0\in P_C(z_0)$,
\item the MQEP if, there exists $z_0\in\mbox{MEP}(f,K(x_0))$ such that $x_0\in P_C(z_0)$,
\end{itemize}
where $P_C$ denotes the projection onto $C$, that means
\[
P_C(z)=\{x\in C:~\|z-x\|\leq \|z-w\|\mbox{ for all }w\in C\}.
\]
\end{definition}

\begin{remark} \label{rempro}
It is clear that every classical solution is a projected solution. If additionally, $K$ is defined from $C$ to $C$, then the set of classical solutions is equal to the set of projected solutions.

On the other hand, we can see the projection as a set-valued map in general. Furthermore,
if $C$ is a convex, closed and non-empty subset of $\R^n$, then $P_C$
is a continuous function. 
\end{remark}


In a similar way to \cite{Cubio,castellani2018},
we now show the existence of projected solutions for quasi-equilibrium problems without upper semicontinuity of the constraint map by using Corollary \ref{FPTLSC}.
But before that, we need to introduce a few definitions.

Consider a non-empty set $C \subset \R^n$, a set-valued map $K:C\tos \R^n$, and a bifunction $f:\R^n\times\R^n\to\R$. Let  $Q:C\times \R^n\tos C\times \R^n$ be the set-valued map defined by \[ Q(x,z)=P_C(z)\times K(x), \] $F:\R^n\tos \R^n$ be defined by 
\[ F(z)=\{y\in \R^n:~f(z,y)<0\},  \] and $R: C \times \R^n \tos\R^n$ be defined by 
\[  R(x,z)=F(z)\cap K(x).\] 

The following lemma is not difficult to check and it establishes a characterization of projected solutions in terms of properties of the set-valued maps $Q$ and $R$.

\begin{lemma}\label{charac}
Let $C$ be a non-empty subset of $\R^n$, $K:C\tos \R^n$ be a set-valued map with non-empty values,
$f:\R^n\times\R^n\to\R$ be a bifunction, and $x\in C$. Then, $x$ is a projected solution of \eqref{QEP} if and only if, there exists $z\in\R^n$ such that
$(x,z)\in \Fix(Q)$ and $R(x,z)=\emptyset$.
\end{lemma}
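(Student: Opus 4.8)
The plan is to prove the biconditional by unwinding the definitions of the set-valued maps $Q$ and $R$ and matching them against the definition of a projected solution of \eqref{QEP}. First I would fix $x \in C$ and suppose that $x$ is a projected solution. By definition, there exists $z \in \mathrm{EP}(f, K(x))$ such that $x \in P_C(z)$. I would then verify that this same $z$ witnesses the right-hand side. The fixed-point condition $(x,z) \in \Fix(Q)$ means $(x,z) \in Q(x,z) = P_C(z) \times K(x)$, which is equivalent to the conjunction $x \in P_C(z)$ and $z \in K(x)$. The first conjunct is exactly the projection requirement, and the second holds because $z \in \mathrm{EP}(f,K(x))$ forces $z \in K(x)$ (by the very formulation of \eqref{EP}, a solution must lie in the constraint set).

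Next I would translate the emptiness condition $R(x,z) = \emptyset$. Since $R(x,z) = F(z) \cap K(x)$ with $F(z) = \{y : f(z,y) < 0\}$, the statement $R(x,z) = \emptyset$ says precisely that there is no $y \in K(x)$ with $f(z,y) < 0$, i.e. $f(z,y) \geq 0$ for all $y \in K(x)$. Combined with $z \in K(x)$, this is exactly the assertion $z \in \mathrm{EP}(f, K(x))$. So I would establish the chain of equivalences: $z \in \mathrm{EP}(f,K(x))$ and $x \in P_C(z)$ hold if and only if $z \in K(x)$, $x \in P_C(z)$, and $f(z,y) \geq 0$ for all $y \in K(x)$, which holds if and only if $(x,z) \in \Fix(Q)$ and $R(x,z) = \emptyset$.

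For the converse, I would run the same equivalences in reverse: given $z$ with $(x,z) \in \Fix(Q)$ and $R(x,z) = \emptyset$, the fixed-point condition yields $x \in P_C(z)$ and $z \in K(x)$, while the emptiness condition yields $f(z,y) \geq 0$ for all $y \in K(x)$; together with $z \in K(x)$ these give $z \in \mathrm{EP}(f,K(x))$, so $x$ is a projected solution. Since every step is a definitional equivalence, the two directions are really one argument read both ways.

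I do not anticipate a genuine obstacle here, as the lemma is purely a bookkeeping reformulation; the only subtlety worth stating explicitly is that membership $z \in K(x)$ is already built into the definition of $\mathrm{EP}(f,K(x))$, so it should be extracted from the equilibrium-problem formulation rather than assumed separately. The hypothesis that $K$ has non-empty values is not needed for the logical equivalence itself but guarantees that $\mathrm{EP}(f,K(x))$ and the relevant intersections are taken over a meaningful set; I would keep the proof at the level of these set-membership translations and avoid any topological or continuity considerations, which play no role in this particular characterization.
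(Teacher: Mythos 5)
Your proof is correct: the paper states this lemma without proof (``not difficult to check''), and your definitional unwinding --- identifying $(x,z)\in\Fix(Q)$ with the conjunction $x\in P_C(z)$ and $z\in K(x)$, and $R(x,z)=\emptyset$ with $f(z,y)\geq 0$ for all $y\in K(x)$ --- is exactly the intended argument. Your remark that $z\in K(x)$ is built into the definition of $\EP(f,K(x))$ is the one point worth making explicit, and you made it.
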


We are now ready for our first existence result.

\begin{theorem}\label{Main-result-3}
Let $C$ be a non-empty, compact and convex subset of $\R^n$, $K:C\tos \R^n$ be a set-valued map and $f:\R^n\times\R^n\to\R$ be a bifunction. Assume that
\begin{enumerate}
\item $Q$ is lsc with non-empty convex values;
\item $Q(C\times\R^n)$ is relatively compact;
\item $\Fix(Q)$ is closed;
\item $R$ is lsc with convex values on $\Fix(Q)$;
\item $f(z,z)\geq0$, for every $z\in M$, where
\[
M= \{w\in K(C):~\mbox{there exists }u\in C\mbox{ such that }(u,w)\in \Fix(Q)\}.
\]
\end{enumerate}
Then, there exists a projected solution of \eqref{QEP}.
\end{theorem}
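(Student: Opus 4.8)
The plan is to argue by contradiction, reducing the existence of a projected solution to a single fixed-point statement through Lemma \ref{charac} and Corollary \ref{FPTLSC}. First note that $Q$ itself satisfies the hypotheses of Corollary \ref{FPTLSC} (closed convex domain $C\times\R^n$, relatively compact range by assumption 2, lsc with nonempty convex values by assumption 1), so $\Fix(Q)\neq\emptyset$. Now suppose, for contradiction, that \eqref{QEP} has no projected solution. By Lemma \ref{charac} this means $R(x,z)\neq\emptyset$ for every $(x,z)\in\Fix(Q)$. Under this hypothesis I would build a set-valued map on $C\times\R^n$ whose fixed points are forced to violate assumption 5, and whose existence is guaranteed by Corollary \ref{FPTLSC}.

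Concretely, define $G:C\times\R^n\tos\R^n$ by $G(x,z)=R(x,z)$ when $(x,z)\in\Fix(Q)$ and $G(x,z)=K(x)$ otherwise, and set $\Phi(x,z)=P_C(z)\times G(x,z)$. The easy hypotheses of Corollary \ref{FPTLSC} follow directly: the values of $\Phi$ are nonempty (the contradiction hypothesis makes the $R$-branch nonempty on $\Fix(Q)$, while $Q$ nonempty-valued forces $K$ nonempty-valued off $\Fix(Q)$) and convex ($P_C(z)$ is a singleton since $C$ is closed and convex, $K(x)$ is convex because $Q$ has convex values, and $R(x,z)$ is convex on $\Fix(Q)$ by assumption 4); moreover $\Phi(x,z)\subseteq Q(x,z)$ everywhere, so $\Phi(C\times\R^n)$ is relatively compact by assumption 2; and $C\times\R^n$ is a nonempty closed convex subset of $\R^{2n}$.

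The crucial and most delicate step, which I expect to be the main obstacle, is the lower semicontinuity of $\Phi$. Since $P_C$ is single-valued and continuous, it suffices to show that $G$ is lsc, and here assumption 3 is essential. Away from the closed set $\Fix(Q)$ the map $G$ agrees locally with $(x,z)\mapsto K(x)$, which is lsc because $Q$ is. At a point $(x_0,z_0)\in\Fix(Q)$, where $G(x_0,z_0)=R(x_0,z_0)\subseteq K(x_0)$, I would fix an open set $V$ meeting $R(x_0,z_0)$ and combine two facts: lower semicontinuity of $R$ on $\Fix(Q)$ (assumption 4) produces a neighborhood on which the points of $\Fix(Q)$ still meet $V$, while lower semicontinuity of $K$ together with $R(x_0,z_0)\subseteq K(x_0)$ produces a neighborhood on which the points outside $\Fix(Q)$, where $G=K$, meet $V$. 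Intersecting the two neighborhoods yields $G(x,z)\cap V\neq\emptyset$ nearby, giving lsc at $(x_0,z_0)$. The closedness of $\Fix(Q)$ is precisely what allows this gluing of the two branches.

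Finally, Corollary \ref{FPTLSC} provides $(\bar x,\bar z)\in\Fix(\Phi)$, so $\bar x=P_C(\bar z)$ and $\bar z\in G(\bar x,\bar z)$. If $(\bar x,\bar z)\notin\Fix(Q)$ then $\bar z\in K(\bar x)$, hence $(\bar x,\bar z)\in P_C(\bar z)\times K(\bar x)=Q(\bar x,\bar z)$, contradicting $(\bar x,\bar z)\notin\Fix(Q)$. Therefore $(\bar x,\bar z)\in\Fix(Q)$, so $G(\bar x,\bar z)=R(\bar x,\bar z)=K(\bar x)\cap F(\bar z)$ and $\bar z\in K(\bar x)$ with $f(\bar z,\bar z)<0$. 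Since $(\bar x,\bar z)\in\Fix(Q)$ and $\bar z\in K(\bar x)\subseteq K(C)$, we get $\bar z\in M$, so assumption 5 forces $f(\bar z,\bar z)\geq0$, a contradiction. Hence the assumption that no projected solution exists is untenable, and \eqref{QEP} admits a projected solution.
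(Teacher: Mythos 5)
Your proof is correct and takes essentially the same route as the paper: the map $\Phi$ you construct coincides exactly with the paper's auxiliary map $S$ (equal to $Q$ off $\Fix(Q)$ and to $P_C(z)\times R(x,z)$ on it), and the conclusion via Corollary \ref{FPTLSC}, Lemma \ref{charac} and assumption 5 is identical. The only differences are expository: you prove the lower semicontinuity of the glued map by hand (in effect reproving the Naselli--Ricceri gluing result, Lemma \ref{lemma-Naselli}, which the paper simply cites), and your up-front contradiction hypothesis makes the nonemptiness requirement of Corollary \ref{FPTLSC} explicit, a point the paper's wording handles only implicitly.
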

In order to prove the previous result we need the following lemma, which can be found in \cite[Lemma 2.3]{Naselli-Ricceri}.
\begin{lemma}\label{lemma-Naselli}
Let $X,Y$ two topological spaces and $A$ a closed subset of $X$. Consider two lsc set-valued maps $F:X\tos Y,\Phi:A\tos Y$ such that, for every $x\in A$, one has
$\Phi(x)\subset F(x)$. Let $G:X\tos Y$ be defined as
\[
G(x)=\left\lbrace\begin{array}{cl}
F(x),&\mbox{if }x\in X\setminus A\\
\Phi(x),&\mbox{if }x\in A.
\end{array}\right.
\]
Then, the set-valued map $G$ is lsc.
\end{lemma}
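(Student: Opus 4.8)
The plan is to prove that the glued map $G$ is lsc by verifying the net-sequential form of lower semicontinuity directly (which, by \cite[Proposition 2.5.6]{VMPOS}, is equivalent to the set form). So I fix a point $x_0\in X$, a net $(x_i)_{i\in I}$ converging to $x_0$, and a target $y_0\in G(x_0)$; I must produce a subnet $(x_{\varphi(j)})_{j\in J}$ and a net $(y_j)_{j\in J}\to y_0$ with $y_j\in G(x_{\varphi(j)})$ for all $j$. The natural case split is on whether $x_0\in A$ or $x_0\in X\setminus A$, and within the first case on the eventual behavior of the tail of $(x_i)$ relative to the closed set $A$.

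First I would dispose of the easy case $x_0\in X\setminus A$. Since $A$ is closed, its complement $X\setminus A$ is open, so the net $(x_i)$ is eventually in $X\setminus A$; passing to that tail (a subnet), we have $G(x_i)=F(x_i)$ and $G(x_0)=F(x_0)$, and the required approximating net comes straight from the lower semicontinuity of $F$ applied to $y_0\in F(x_0)$. The substance of the argument is the case $x_0\in A$, where $G(x_0)=\Phi(x_0)$, so $y_0\in\Phi(x_0)$. Here I would split the index set according to whether $x_i\in A$ or $x_i\notin A$.

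The key observation is that in both subcases I can approximate $y_0$ using the right map. If I pass to the subnet of indices with $x_i\in A$, then $G(x_i)=\Phi(x_i)$ and I invoke lower semicontinuity of $\Phi$ at $x_0\in A$ with the point $y_0\in\Phi(x_0)$ to obtain a further subnet and an approximating net with values in $\Phi(x_{\varphi(j)})=G(x_{\varphi(j)})$. If instead I pass to the subnet of indices with $x_i\in X\setminus A$, then $G(x_i)=F(x_i)$; here I use lower semicontinuity of $F$ at $x_0$, but now I apply it to the point $y_0$, which lies in $F(x_0)$ because the hypothesis $\Phi(x_0)\subset F(x_0)$ guarantees $y_0\in\Phi(x_0)\subset F(x_0)$. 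This is the crucial point: the inclusion $\Phi(x)\subset F(x)$ on $A$ is exactly what lets $F$ serve as a valid approximating map even when the limit value $y_0$ was drawn from $\Phi$. The approximating net then satisfies $y_j\in F(x_{\varphi(j)})=G(x_{\varphi(j)})$.

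The main obstacle is bookkeeping with subnets: a general net need not be eventually inside $A$ nor eventually outside it, so one of the two index classes $\{i:x_i\in A\}$ and $\{i:x_i\notin A\}$ must be cofinal in $I$, and I would select a subnet lying entirely in that cofinal class before applying the appropriate lower semicontinuity. Once the subnet is confined to a single class, the two subcases above each yield the desired approximating net, and composing with the subnet extraction gives a single subnet of $(x_i)$ together with a net converging to $y_0$ whose values lie in $G$. I would finally remark that this net-based verification is legitimate precisely because of the equivalence recorded after the definitions in Section \ref{preli}, so no separate argument for the set formulation is needed.
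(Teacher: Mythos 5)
Your proof is correct. Note that the paper itself offers no argument for this lemma: it is quoted directly from \cite[Lemma 2.3]{Naselli-Ricceri}, so there is no in-paper proof to match; what you have written is a valid self-contained substitute. The delicate points are all handled properly: at least one of the index classes $\{i: x_i\in A\}$, $\{i: x_i\notin A\}$ must be cofinal (otherwise directedness of $I$ gives an index beyond both witnessing tails, a contradiction), a cofinal restriction is a subnet, and since the net formulation of lower semicontinuity only demands \emph{some} subnet carrying an approximating net, producing one along whichever class is cofinal suffices, a subnet of a subnet being a subnet of the original. You also place the two hypotheses exactly where they are needed: closedness of $A$ is what makes the case $x_0\in X\setminus A$ trivial (the net is eventually in the open set $X\setminus A$), and the inclusion $\Phi(x_0)\subset F(x_0)$ is precisely what lets $F$ approximate a target $y_0$ drawn from $\Phi(x_0)$ along indices lying outside $A$. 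For comparison, the quicker route (and presumably the one in the cited reference) works with the set formulation directly: for $x_0\in A$ and an open $V$ meeting $G(x_0)=\Phi(x_0)$, the inclusion gives $F(x_0)\cap V\neq\emptyset$, so lower semicontinuity of $F$ yields a neighborhood $U_1$ of $x_0$ handling points outside $A$, lower semicontinuity of $\Phi$ yields a neighborhood $U_2$ handling points of $A$, and $U_1\cap U_2$ witnesses lower semicontinuity of $G$ at $x_0$; the case $x_0\notin A$ again uses openness of $X\setminus A$. That version avoids all subnet bookkeeping, but both arguments pivot on the same two observations, so the difference is one of packaging rather than substance.
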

\begin{proof}[of Theorem \ref{Main-result-3}]
First notice that the non-emptyness of $\Fix(Q)$ is guaranteed by Corollary \ref{FPTLSC}.
Let $S:C\times \R^n\tos C\times\R^n$
be the set-valued map defined by
\[
S(x,z)=\left\lbrace\begin{array}{cl}
Q(x,z),&\mbox{if }(x,z)\in C\times\R^m\setminus \Fix(Q)\\
P_C(z)\times R(x,z),&\mbox{if }(x,z)\in \Fix(Q).
\end{array}\right.
\]
Since $S(x,z)\subset Q(x,z)$ for each $(x,z)\in C\times\R^n$, the lower semicontinuity of $S$ follows from Lemma \ref{lemma-Naselli} and the fact that $Q$ is lsc. Moreover, $S$ is convex valued. Since $Q(C\times\R^n)$ is relatively compact, so it is $S(C\times\R^n)$.  Corollary \ref{FPTLSC} implies that there exists $(x_0,z_0)\in C\times\R^n$ such that $(x_0,z_0) \in S(x_0,z_0)$.
This in turn implies $(x_0,z_0)\in P_C(z_0)\times R(x_0,z_0)$, that means $z_0\in R(x_0,z_0)$, but this a contradiction because $z_0 \in M$. Thus, there exists some $(x_0,z_0)\in C\times\R^n$ such that $S(x_0,z_0)=\emptyset$, which means that $(x_0,z_0)\in \Fix(Q)$ and $R(x_0,z_0)=\emptyset$. The
result follows then from  Lemma \ref{charac}.
\end{proof}

\begin{remark}\label{inf-fin}
Since $P_C$ is a continuous function,
$Q$ is lsc with non-empty and convex values, provided that
 the set-valued map $K$ is lsc with non-empty convex values.
On  the other hand, if the bifunction $f$ is quasiconvex with respect to its second argument,
 then $F$ and $R$ are convex valued. Moreover, if $f$ is continuous, then $F$ has open graph.
\end{remark}

In order to show the lower semicontinuity of $R$ we give sufficient conditions in the following result, which is inspired by \cite[Lemma 4.2]{Nicho}.

\begin{proposition}\label{lsc-int}
Let $X,~Y,~Z$ be topological spaces, and $T_1:X\tos Y$, $T_2:Z\tos Y$ be set-valued maps such that $T_1$ has open graph and $T_2$ is lsc. Then, the set-valued map $T:X\times Z\tos Y$, defined by
\[
T(x,z)=T_1(x)\cap T_2(z)
\] 
is lsc.
\end{proposition}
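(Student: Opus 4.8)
The plan is to verify lower semicontinuity of $T$ directly from the net characterization recalled in Section~\ref{preli}, exploiting the two hypotheses in complementary ways: openness of $\gra(T_1)$ will keep membership in $T_1$ stable along convergent nets, while lower semicontinuity of $T_2$ will supply the approximating values. Fix $(x_0,z_0)\in X\times Z$, a net $(x_i,z_i)_{i\in I}$ converging to $(x_0,z_0)$, and a point $y_0\in T(x_0,z_0)=T_1(x_0)\cap T_2(z_0)$. I must produce a subnet of $(x_i,z_i)$ and an accompanying net of values lying in $T$ that converges to $y_0$.

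First I would use $y_0\in T_2(z_0)$ together with the lsc of $T_2$ to obtain a subnet $(z_{\varphi(j)})_{j\in J}$ of $(z_i)$ and a net $(y_j)_{j\in J}$ with $y_j\to y_0$ and $y_j\in T_2(z_{\varphi(j)})$ for all $j$. The corresponding net $(x_{\varphi(j)})_{j\in J}$ is then a subnet of $(x_i)$, hence still converges to $x_0$. Next I would invoke openness of $\gra(T_1)$: since $(x_0,y_0)\in\gra(T_1)$, there is a basic open box $U_1\times V_1\subset\gra(T_1)$ with $x_0\in U_1$ and $y_0\in V_1$, so that $y\in T_1(x)$ for every $(x,y)\in U_1\times V_1$. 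Because $x_{\varphi(j)}\to x_0\in U_1$ and $y_j\to y_0\in V_1$ with $U_1,V_1$ open, there is an index $j_0$ beyond which $x_{\varphi(j)}\in U_1$ and $y_j\in V_1$ simultaneously, whence $y_j\in T_1(x_{\varphi(j)})$; combining this with $y_j\in T_2(z_{\varphi(j)})$ gives $y_j\in T(x_{\varphi(j)},z_{\varphi(j)})$ for all $j\geq j_0$.

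The only genuine care needed is the final bookkeeping: the net definition demands that the membership $y_j\in T(x_{\varphi(j)},z_{\varphi(j)})$ hold for every index, not merely eventually. I would resolve this by restricting to the cofinal tail $J'=\{j\in J:j\geq j_0\}$, which is itself directed and cofinal, so that $(x_{\varphi(j)},z_{\varphi(j)})_{j\in J'}$ remains a subnet of the original net and $(y_j)_{j\in J'}$ still converges to $y_0$; this is the subnet witnessing lower semicontinuity. An alternative, perhaps cleaner, route is the open-set formulation: given an open $V$ meeting $T(x_0,z_0)$ at some $y_0$, shrink $V_1$ to $V_1\cap V$, apply lsc of $T_2$ to obtain a neighborhood $U_2$ of $z_0$ with $T_2(z)\cap V_1\neq\emptyset$ throughout $U_2$, and then check that $U_1\times U_2$ is the required neighborhood of $(x_0,z_0)$ in the product topology. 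I expect this version to avoid the subnet subtlety entirely and to be the form I would ultimately write up.
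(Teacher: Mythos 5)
Both routes you give are correct, so the proof stands; the interesting point is how they relate to the paper's. Your second, open-set sketch \emph{is} the paper's proof: the paper picks $y_0\in T(x_0,z_0)\cap V$, uses the open graph of $T_1$ to produce a box $V_{x_0}\times V_{y_0}\subset\gra(T_1)$, applies lower semicontinuity of $T_2$ with the open set $V_{y_0}$ to get a neighborhood $V_{z_0}$ of $z_0$, and concludes with the neighborhood $V_{x_0}\times V_{z_0}$ of $(x_0,z_0)$. Your version is in fact slightly more careful than the paper's: the paper never intersects $V_{y_0}$ with $V$, yet its final claim is that $T_1(x')\cap T_2(z')\cap V\neq\emptyset$, which requires the box to satisfy $V_{y_0}\subset V$ --- exactly your step ``shrink $V_1$ to $V_1\cap V$'' (legitimate since $V$ is open and contains $y_0$). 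Your first, net-based argument is a genuinely different mechanism, though it proves the same statement via the equivalence of the two lsc notions recalled in Section~\ref{preli}; it is correct as written, and your resolution of the ``for all $j$'' versus ``eventually'' mismatch by restricting to the cofinal tail $J'=\{j\in J: j\geq j_0\}$ (which is directed, cofinal, and so still yields a subnet) is precisely the bookkeeping the net definition demands. What the comparison buys: the open-set proof is shorter, subnet-free, and works verbatim in arbitrary topological spaces, which is why the paper (and you, in the end) prefer it; the net proof adds nothing here beyond illustrating that either characterization of lower semicontinuity can carry the argument.
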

\begin{proof}
Let $V$ be an open subset of $Y$ and $(x_0,z_0)$ be an element of $X\times Z$ such that
$T(x_0,z_0)\cap V\neq\emptyset$. For $y_0\in T(x_0,z_0)\cap V$, since $T_1$ has open graph, we deduce that there exist $V_{x_0}$ and $V_{y_0}$, open subsets of $X$ and $Y$ respectively, such that $(x_0,y_0)\in V_{x_0}\times V_{y_0}$, where $V_{x_0}\times V_{y_0}$ is a subset of the graph of $T_1$. By the lower semicontinuity of $T_2$, there exists $V_{z_0}$, an open subset of $Z$, such that $z_0\in V_{z_0}$ and $T_2(z')\cap V_{y_0}\neq\emptyset$. Thus, taking $V_{x_0}\times V_{z_0}$ we  have that 
\[ T(x',z')\cap V=T_1(x')\cap T_2(z')\cap V\neq\emptyset. \]
\end{proof}

Thanks to Theorem \ref{Main-result-3}, Remark \ref{inf-fin} and Proposition \ref{lsc-int} we have the following corollary.

\begin{corollary}\label{P-result}
Assume that $C$ is compact, convex and non-empty.
If the following hold
\begin{enumerate}
\item $K$ is closed and lsc with convex values;
\item $K(C)$ is a compact subset of $\R^n$;
\item $f$ is continuous and quasiconvex with respect to its second argument;
\item $f$ vanishes on the diagonal of $\R^n\times \R^n$;
\end{enumerate}
then, there exists a projected solution of \eqref{QEP}.
\end{corollary}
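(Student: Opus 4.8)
The plan is to derive this corollary by checking, one by one, that the four listed assumptions together with the standing hypotheses on $C$ force all five hypotheses of Theorem \ref{Main-result-3}; the existence of a projected solution then follows verbatim from that theorem. Three of the five are essentially immediate. For hypothesis (1), since $C$ is convex, closed and non-empty the projection $P_C$ is a continuous single-valued map, so Remark \ref{inf-fin} applies and gives that $Q(x,z)=P_C(z)\times K(x)$ is lsc with non-empty convex values (using that $K$ is non-empty valued, as assumed throughout this section). For hypothesis (2), I would observe that $P_C(z)\subset C$ for all $z$ and $K(x)\subset K(C)$ for all $x\in C$, whence $Q(C\times\R^n)\subset C\times K(C)$; as $C$ and $K(C)$ are compact, this product is compact and $Q(C\times\R^n)$ is relatively compact. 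For hypothesis (5), the assumption that $f$ vanishes on the diagonal gives $f(z,z)=0\ge 0$ for every $z$, so in particular on $M$.

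The two substantive verifications are hypotheses (3) and (4). For (3), I would rewrite $\Fix(Q)=\{(x,z): x\in P_C(z)\text{ and }z\in K(x)\}$ and exhibit it as the intersection of two closed sets: the condition $x=P_C(z)$ cuts out the (coordinate-swapped) graph of the continuous function $P_C$, which is closed, while $z\in K(x)$ cuts out the graph of $K$, closed because $K$ is a closed map; the intersection is therefore closed. For (4), convexity of the values of $R$ comes from Remark \ref{inf-fin} via quasiconvexity of $f$ in its second argument, and the lower semicontinuity is where Proposition \ref{lsc-int} enters: continuity of $f$ makes $F$ have open graph (again by Remark \ref{inf-fin}), while $K$ is lsc, so writing $R(x,z)=F(z)\cap K(x)$ and assigning $F$ to the open-graph slot and $K$ to the lsc slot, the proposition yields that $R$ is lsc on all of $C\times\R^n$ (up to the harmless reordering of the two arguments), hence in particular on $\Fix(Q)$.

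I expect the only genuine obstacle to be the lower semicontinuity of $R$ in step (4): it is the one place where an external result is needed, and care is required both in matching the two factors $F$ and $K$ to the correct hypotheses of Proposition \ref{lsc-int} and in recognizing that continuity of $f$ (rather than mere upper or lower semicontinuity) is exactly what guarantees the open graph of $F$. Everything else is either a direct citation of Remark \ref{inf-fin} or a one-line set-theoretic observation, after which Theorem \ref{Main-result-3} delivers a projected solution of \eqref{QEP}.
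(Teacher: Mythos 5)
Your proposal is correct and follows exactly the route the paper itself indicates for this corollary: it is stated as a consequence of Theorem \ref{Main-result-3}, Remark \ref{inf-fin} and Proposition \ref{lsc-int}, and your verification of the five hypotheses (including the closedness of $\Fix(Q)$ via the continuity of $P_C$ and the closedness of $K$, and the lower semicontinuity of $R$ by feeding $F$ into the open-graph slot and $K$ into the lsc slot of Proposition \ref{lsc-int}) is precisely the omitted detail. Note only that the paper's explicitly written proof of this corollary is a second, \emph{alternative} argument --- it works with the map $T(x,z)=P_C(z)\times\argmin_{y\in K(x)}f(z,y)$ and Himmelberg's fixed point theorem so as to extend to Banach spaces --- but your argument matches the paper's primary derivation.
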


We now present an alternative proof of the previous result, which does not follow from Theorem \ref{Main-result-3} but it also works in Banach spaces. 
However, we need to introduce first a few definitions.

The set-valued map $S:C\times\R^n\tos\R^n$ is defined by
\[
S(x,z)=\argmin_{y\in K(x)}f(z,y)
\]
and $T:C\times\R^n\tos C\times \R^n$ is defined by
\[
T(x,z)=P_C(z)\times S(x,z).
\]
It is clear that $T(C\times\R^n)\subset C\times \R^n$.

We also need the following two lemmas in order to establish the proof of Corollary \ref{P-result}. The first one characterizes projected solutions of \eqref{QEP} as fixed points of $T$.

\begin{lemma}\label{equiv}
Let $x_0\in C$ and assume that $f$ vanishes on the diagonal of $\R^n\times \R^n$. 
Then, $x_0$ is a projected solution of \eqref{QEP} if and only if, there exists
$z_0\in X$ such that $(x_0,z_0)$ is a fixed point of $T$.
\end{lemma}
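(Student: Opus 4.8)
The plan is to prove the equivalence by unwinding both sides through the definitions of projected solution and of the map $T(x,z)=P_C(z)\times S(x,z)$, where $S(x,z)=\argmin_{y\in K(x)}f(z,y)$. A fixed point $(x_0,z_0)\in T(x_0,z_0)$ means precisely that $x_0\in P_C(z_0)$ and $z_0\in S(x_0,z_0)=\argmin_{y\in K(x_0)}f(z_0,y)$. So the whole statement reduces to showing that, for a point $z_0$, being an element of $\EP(f,K(x_0))$ is equivalent to $z_0$ minimizing $f(z_0,\cdot)$ over $K(x_0)$. This is exactly where the hypothesis that $f$ vanishes on the diagonal enters, and it is the crux of the argument.

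First I would prove the forward direction. Suppose $x_0$ is a projected solution of \eqref{QEP}. By definition there exists $z_0\in\EP(f,K(x_0))$ with $x_0\in P_C(z_0)$. The defining property $z_0\in\EP(f,K(x_0))$ says $z_0\in K(x_0)$ and $f(z_0,y)\geq0$ for all $y\in K(x_0)$. Since $f$ vanishes on the diagonal, $f(z_0,z_0)=0$, so $z_0$ attains the value $0$, which is a lower bound for $f(z_0,\cdot)$ on $K(x_0)$; hence $z_0\in\argmin_{y\in K(x_0)}f(z_0,y)=S(x_0,z_0)$. Combined with $x_0\in P_C(z_0)$ this gives $(x_0,z_0)\in P_C(z_0)\times S(x_0,z_0)=T(x_0,z_0)$, so $(x_0,z_0)$ is a fixed point of $T$.

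For the converse, suppose $(x_0,z_0)$ is a fixed point of $T$, so $x_0\in P_C(z_0)$ and $z_0\in S(x_0,z_0)=\argmin_{y\in K(x_0)}f(z_0,y)$. In particular $z_0\in K(x_0)$ and $f(z_0,z_0)\leq f(z_0,y)$ for every $y\in K(x_0)$. Again using that $f$ vanishes on the diagonal, $f(z_0,z_0)=0$, so $f(z_0,y)\geq0$ for all $y\in K(x_0)$, which means $z_0\in\EP(f,K(x_0))$. Together with $x_0\in P_C(z_0)$, this is exactly the definition of $x_0$ being a projected solution of \eqref{QEP}.

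The argument is essentially a bookkeeping exercise, so there is no serious obstacle; the one point that must not be glossed over is that the diagonal-vanishing assumption is genuinely needed in \emph{both} directions to convert the $\argmin$ condition into the sign condition $f(z_0,\cdot)\geq0$ and back. Without it, $z_0$ minimizing $f(z_0,\cdot)$ over $K(x_0)$ would only give $f(z_0,y)\geq f(z_0,z_0)$, which need not be nonnegative, and conversely the equilibrium inequality would not by itself certify that $z_0$ is a minimizer. I would therefore state explicitly at each step where $f(z_0,z_0)=0$ is invoked.
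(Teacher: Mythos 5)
Your proof is correct and follows essentially the same route as the paper: the paper's one-line proof rests on exactly the equivalence you establish, namely that $z_0\in\EP(f,K(x_0))$ if and only if $z_0\in S(x_0,z_0)$, which hinges on $f(z_0,z_0)=0$. You have simply written out in full the two directions that the paper compresses into a single observation.
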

\begin{proof}
Since $f(x,x)=0$ for all $x\in \R^n$, the result follows from the fact that for any $(x,z)\in C\times \R^n$,  
$z\in \mbox{EP}(f,K(x))$ if and only if,
$z\in S(x,z)$.
\end{proof}

\begin{remark} The equivalence in  Lemma \ref{equiv} does not hold if we assume $f$ to be positive on the diagonal of $\R^n\times \R^n$.
Consider for instance the bifunction $f:\R\times\R\to\R$ and the constraint set-valued map $K:[0,1]\tos\R$, both defined by
\[
f(x,y)=\left\lbrace\begin{array}{cl}
                    1,&\mbox{if  }x=y\\
                    0,&\mbox{if }x\neq y
                   \end{array}
 \right. \quad \mbox{and} \quad K(x)=[0,1+x].
\]
Clearly $K$ is closed and lsc. Also,
for each $z\in \R$ we have that $S(x,z)=K(x)\setminus\{z\}$, for all $x\in[0,1]$ and $z\in\R$. So,
 $T$ does not have fixed points.
However, $0$ is a projected solution of  \eqref{QEP} associated to $f$ and $K$.
\end{remark}

The second lemma is the following.
\begin{lemma}\label{uno}
Let $C$ be a closed and non-empty subset of $\R^n$ and $f$ be a continuous bifunction. If $K$ is  closed and lsc, then the set-valued map $S$ is closed.
\end{lemma}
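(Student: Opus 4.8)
The plan is to verify the net definition of closedness directly. Let $((x_i,z_i),y_i)_{i\in I}$ be a net in the graph of $S$ converging to $((x_0,z_0),y_0)$; by definition this means $y_i\in K(x_i)$ together with the minimality $f(z_i,y_i)\le f(z_i,y)$ for every $y\in K(x_i)$. I must check that $(x_0,z_0)$ lies in the domain $C\times\R^n$ of $S$ and that $y_0\in S(x_0,z_0)$. The domain membership is immediate: each $x_i\in C$ and $C$ is closed, so $x_0\in C$, while $z_0\in\R^n$ trivially.

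First I would use the closedness of $K$. Since $(x_i,y_i)_{i\in I}$ is a net in the graph of $K$ converging to $(x_0,y_0)$, closedness gives $y_0\in K(x_0)$, so $y_0$ is a feasible point for the minimization defining $S(x_0,z_0)$. It then remains only to show that $y_0$ is in fact a minimizer of $f(z_0,\cdot)$ over $K(x_0)$.

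The key step is to propagate the minimality inequalities to the limit using the lower semicontinuity (by nets) of $K$. Fix an arbitrary $w\in K(x_0)$. Applying lsc by nets to the net $(x_i)$ converging to $x_0$ and to the point $w\in K(x_0)$, I obtain a subnet $(x_{\varphi(j)})_{j\in J}$ and a net $(w_j)_{j\in J}$ with $w_j\in K(x_{\varphi(j)})$ for all $j$ and $w_j\to w$. For each $j$, since $y_{\varphi(j)}$ minimizes $f(z_{\varphi(j)},\cdot)$ over $K(x_{\varphi(j)})$ and $w_j$ is feasible, we have
\[
f(z_{\varphi(j)},y_{\varphi(j)})\le f(z_{\varphi(j)},w_j).
\]
As $(z_{\varphi(j)},y_{\varphi(j)})\to(z_0,y_0)$ (a subnet of a convergent net converges to the same limit) and $(z_{\varphi(j)},w_j)\to(z_0,w)$, passing to the limit and invoking the continuity of $f$ yields $f(z_0,y_0)\le f(z_0,w)$. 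Since $w\in K(x_0)$ was arbitrary, $y_0\in\argmin_{y\in K(x_0)}f(z_0,y)=S(x_0,z_0)$, which establishes closedness.

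I expect the only delicate point to be the bookkeeping around the subnets produced by lower semicontinuity: the inequality $f(z_0,y_0)\le f(z_0,w)$ must be obtained separately for each $w\in K(x_0)$, each time along its own subnet, and one has to observe that restricting to a subnet does not disturb the convergence of $(z_{\varphi(j)},y_{\varphi(j)})$ to $(z_0,y_0)$. Beyond this, the argument is a routine passage to the limit combining closedness of $K$ for feasibility, lsc of $K$ for approximating competitors, and continuity of $f$ to carry the minimality inequality across.
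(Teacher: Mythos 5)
Your proof is correct and follows essentially the same route as the paper's: closedness of $K$ gives feasibility of the limit point, lower semicontinuity of $K$ (by nets) supplies approximating competitors $w_j\in K(x_{\varphi(j)})$ for each $w\in K(x_0)$, and continuity of $f$ carries the minimality inequality to the limit. Your additional bookkeeping remarks (subnets preserve limits, $x_0\in C$ by closedness of $C$) are accurate refinements of the same argument.
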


\begin{proof}
Let $(x_\alpha,z_\alpha,w_\alpha)_{\alpha\in A}$ be a net in the graph of $S$ 
converging to $(x_0,z_0,w_0)$.
The closeness of $K$ implies $w_0\in K(x_0)$.
Since $K$ is lsc, for each $u\in K(x_0)$, there exists a subnet
$(x_{\varphi(\beta)})_{\beta\in B}$ and a net $(u_\beta)_{\beta\in A}$ converging 
to $u$ such that $u_\beta\in K(x_{\varphi(\beta)})$ and
$f(z_{\varphi(\beta)},w_{\varphi(\beta)})\leq f(z_{\varphi(\beta)}, u_\beta)$, 
for all $\beta\in B$.  
By continuity of $f$, we obtain $f(z_0,w_0)\leq f(z_0,u)$. 
Therefore, $w_0\in S(x_0,z_0)$.
\end{proof}

\begin{proof}[of Corollary \ref{P-result}]
By  Lemma \ref{uno} and the quasiconvexity in the second argument of $f$, we deduce that $S$ is closed with compact and convex values.  
Moreover, since $K(C)$ is  compact, $S$ must be usc. Since $P_C$ is continuous, we deduce that $T$ is usc with
compact, convex and non-empty values.
Hence, by  Theorem \ref{HFPT},
there exists a fixed point of $T$. The result follows from  Lemma \ref{equiv}.
\end{proof}

\begin{remark}
In Banach spaces, the projection $P_C$ is always usc with convex, compact and non-empty values, provided that $C$ is convex, compact and non-empty; and it is enough in order to obtain the upper semicontinuity of $T$ in the previous proof. 
\end{remark}

\begin{remark}
We note that \[ \Fix(Q)=\{(x,x)\in C\times C:~x\in \Fix(K)\},  \] 
when the constraint map is a self-map. Moreover, the set $M$ in Theorem \ref{Main-result-3} coincides with $\Fix(K)$, and the set-valued map $R$ restricted to $\Fix(Q)$ is
 \[
 R(x,x)=\{y\in K(x):~f(x,y)<0\}.
 \]
Moreover, if $K(x)=C$, for all $x\in C$, and $f$ is upper semicontinuous with respect to its first argument, then by Proposition \ref{lsc-int}, $R$ is lsc.
\end{remark}

As a corollary of  Theorem \ref{Main-result-3},  Proposition \ref{lsc-int} and Remark \ref{rempro}, we recover the following existence result due to Cubiotti \cite{Cubio} by considering $K$ to be a self-map.
\begin{corollary}\cite[Theorem 2.1]{Cubio}
Let $C$ be a non-empty compact convex subset of $\R^n$, $K:C\tos C$ be a set-valued map, and
$f:C\times C\to\R$ be a bifunction. Assume that
\begin{enumerate}
\item $K$ is lsc with non-empty convex values;
\item $\Fix(K)$ is closed;
\item the set $\{(x,y)\in C\times C:~f(x,y)\geq0\}$ is closed;
\item for each $x\in C$, $f(x,\cdot)$ is quasiconvex on $K(x)$;
\item for each $x\in \Fix(K)$, $f(x,x)\geq0$.
\end{enumerate}
Then, there exists a classical solution of \eqref{QEP}.
\end{corollary}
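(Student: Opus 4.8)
The plan is to read this corollary as the self-map specialization of Theorem \ref{Main-result-3}, so that essentially no new argument is needed beyond matching hypotheses. First I would invoke Remark \ref{rempro}: since $K$ sends $C$ into $C$, the classical solutions of \eqref{QEP} are exactly its projected solutions, so it is enough to produce a single projected solution. By Theorem \ref{Main-result-3} this will follow once its five hypotheses are checked under assumptions (1)--(5) of the corollary, and the whole proof is thus a verification.

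The computation I would do first is the identification of the auxiliary sets. If $(x,z)\in\Fix(Q)$ then $z\in K(x)\subset C$, whence $P_C(z)=\{z\}$; the requirement $x\in P_C(z)$ then forces $x=z$, and $z\in K(x)=K(z)$ shows $z\in\Fix(K)$. Conversely every $x\in\Fix(K)$ gives $(x,x)\in\Fix(Q)$, so $\Fix(Q)=\{(x,x):x\in\Fix(K)\}$ and consequently $M=\Fix(K)$. This immediately disposes of hypotheses (3) and (5) of the theorem: $\Fix(Q)$ is the image of the closed set $\Fix(K)$ under the (homeomorphic) diagonal embedding, hence closed, and assumption (5) of the corollary reads precisely $f(z,z)\geq0$ for all $z\in M$.

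For the rest I would argue as follows. Hypotheses (1) and (2) come from Remark \ref{inf-fin} together with compactness: $C$ convex, closed and non-empty makes $P_C$ a continuous single-valued map, so $Q(x,z)=P_C(z)\times K(x)$ is lsc with non-empty convex values as soon as $K$ is (assumption (1)), and $Q(C\times\R^n)=C\times K(C)\subset C\times C$ is relatively compact. For hypothesis (4), on $\Fix(Q)$ the map restricts to $R(x,x)=\{y\in K(x):f(x,y)<0\}$, whose values are convex because $f(x,\cdot)$ is quasiconvex on the convex set $K(x)$ (strict sublevel sets of a quasiconvex function are convex), and whose lower semicontinuity is delivered by Proposition \ref{lsc-int}: assumption (3), the closedness of $\{(x,y)\in C\times C:f(x,y)\geq0\}$, says exactly that $F$ has open graph, while $K$ is lsc, so $R(x,z)=F(z)\cap K(x)$ is lsc. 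With all five hypotheses in hand, Theorem \ref{Main-result-3} produces a projected solution, and the first step upgrades it to a classical one.

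Since this is a specialization, there is no genuinely hard step; the only point I would take care with is the bookkeeping of ambient spaces. The corollary poses $f$ on $C\times C$, whereas Theorem \ref{Main-result-3} and the maps $F,R$ are written over $\R^n$, so I would make explicit that, because $K$ is a self-map, every evaluation of $f$ entering $R$, $M$, or a fixed point of $Q$ takes place at points of $C$; working throughout with the restrictions to $C$ lets me read assumption (3) as the open-graph hypothesis required by Proposition \ref{lsc-int}. Getting that reading right---rather than appealing to continuity of $f$ as in Remark \ref{inf-fin}---is the one place where a careless argument could go wrong.
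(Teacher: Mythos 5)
Your proposal is correct and takes essentially the same route as the paper: the paper also deduces this corollary from Theorem \ref{Main-result-3} together with Proposition \ref{lsc-int} and Remark \ref{rempro}, using the self-map identifications $\Fix(Q)=\{(x,x):x\in\Fix(K)\}$, $M=\Fix(K)$, and $R(x,x)=\{y\in K(x):~f(x,y)<0\}$ recorded in the remark preceding the corollary. Your point that assumption (3) (closedness of $\{(x,y)\in C\times C:~f(x,y)\geq0\}$), rather than continuity of $f$, is what makes $F$ have open graph on $C\times C$ is exactly the right reading of the hypotheses needed for Proposition \ref{lsc-int}.
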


As another corollary of  Theorem \ref{Main-result-3}, we recover the famous minimax inequality due to Ky Fan on finite dimensional spaces, which can be found in \cite{Kfan}.
\begin{corollary}[Ky Fan]
Let $C$ be a  non-empty compact and convex subset of $\R^n$ and $f:C\times C\to\R$ be a bifunction. Assume that
\begin{enumerate}
\item $f$ is upper semicontinuous with respect to its first argument,
\item $f$ is quasiconvex with respect to its second argument, and
\item $f$  is not negative on the diagonal of $C\times C$.
\end{enumerate}
Then, there exists a solution of \eqref{EP}.
\end{corollary}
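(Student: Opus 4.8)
The plan is to reduce the inequality to a single application of Theorem \ref{Main-result-3} by taking the constant constraint map $K(x)=C$ for every $x\in C$. With this choice $K$ is a self-map from $C$ to $C$, so by Remark \ref{rempro} projected and classical solutions coincide; and a classical solution of \eqref{QEP} is exactly a point $x_0\in C$ with $f(x_0,y)\geq 0$ for all $y\in K(x_0)=C$, i.e.\ a solution of \eqref{EP}. Hence it suffices to produce a projected solution of \eqref{QEP} for this $K$, and the whole argument amounts to checking the five hypotheses of Theorem \ref{Main-result-3}. Note that since $K(x)=C\subset C$, every evaluation of $f$ that arises (inside $F(z)\cap K(x)$ and inside the set $M$) takes place on $C\times C$, so no extension of $f$ beyond its given domain is required.

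First I would record the structural simplifications produced by $K\equiv C$. Because $C$ is compact, convex and non-empty, $P_C$ is a continuous single-valued map (Remark \ref{rempro}); thus $Q(x,z)=P_C(z)\times C$ is lsc with non-empty convex values (condition 1, also via Remark \ref{inf-fin}), and $Q(C\times\R^n)=C\times C$ is compact, giving condition 2. A pair $(x,z)$ lies in $\Fix(Q)$ iff $x=P_C(z)$ and $z\in C$, which forces $x=z$; therefore $\Fix(Q)$ is the diagonal of $C\times C$, a closed set (condition 3), and the associated set $M$ equals $C$. Consequently condition 5, namely $f(z,z)\geq 0$ for all $z\in M=C$, is precisely the hypothesis that $f$ is nonnegative on the diagonal of $C\times C$.

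The one hypothesis requiring genuine care is condition 4: lower semicontinuity, with convex values, of $R$ on $\Fix(Q)$. On the diagonal one has $R(z,z)=\{y\in C:\ f(z,y)<0\}$, and convexity of these values is immediate from quasiconvexity of $f$ in its second argument, since a strict sublevel set of a quasiconvex function intersected with the convex set $C$ is convex. For lower semicontinuity I would argue directly rather than through Proposition \ref{lsc-int}, because with only upper semicontinuity of $f$ in the first argument the graph of $F$ need not be open, so that proposition does not apply. Instead, given $z_0$ and $y_0\in C$ with $f(z_0,y_0)<0$ and a net $z_i\to z_0$ along the diagonal, upper semicontinuity of $f(\cdot,y_0)$ gives $\limsup_{z\to z_0}f(z,y_0)\leq f(z_0,y_0)<0$, so $f(z_i,y_0)<0$ eventually, and the constant selection $y_i\equiv y_0$ witnesses lower semicontinuity at $(z_0,z_0)$. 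This constant-selection observation, which exploits that the target point may be held fixed, is the crux of the proof and the step I expect to be the main obstacle.

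Once conditions 1--5 are verified, Theorem \ref{Main-result-3} yields a projected solution of \eqref{QEP}, which by the identification in the first paragraph is a solution of \eqref{EP}, completing the argument.
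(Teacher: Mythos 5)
Your proposal is correct and, in structure, it is exactly the paper's route: the paper deduces Ky Fan's inequality from Theorem \ref{Main-result-3} via the remark on self constraint maps, i.e.\ by taking $K\equiv C$, identifying $\Fix(Q)$ with the diagonal of $C\times C$, the set $M$ with $\Fix(K)=C$, and $R(x,x)=\{y\in C:\ f(x,y)<0\}$, precisely the reductions you carry out. The one place where you genuinely depart from the paper is the step you yourself single out as the crux, namely condition 4. The paper's remark claims that, when $K(x)=C$ for all $x$ and $f$ is upper semicontinuous in its first argument, the map $R$ is lsc ``by Proposition \ref{lsc-int}''; you instead argue directly with the constant selection $y_i\equiv y_0$. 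Your caution is justified: Proposition \ref{lsc-int} requires $F$ to have \emph{open graph}, which is a joint condition in $(z,y)$ and does not follow from upper semicontinuity of $f(\cdot,y)$ for each fixed $y$ (it does follow when $f$ is jointly upper semicontinuous, which is the situation of Remark \ref{inf-fin} and Corollary \ref{P-result}, where $f$ is continuous). What separate upper semicontinuity in the first argument gives is that $F$ has open lower sections, i.e.\ $\{z:\ y\in F(z)\}$ is open for each fixed $y$; a map with open lower sections is lsc by exactly your constant-selection argument, and since the other factor in the intersection defining $R$ is the constant map $K\equiv C$, this weaker property suffices. So your proof is not merely equivalent to the paper's: it supplies a sound justification for the step that the paper covers with a loose citation, while everything else (the fixed-point machinery of Theorem \ref{Main-result-3}, the self-map identification of projected with classical solutions, convexity of the strict sublevel sets) coincides with the intended argument.
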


Now, we will establish the existence of projected solutions for Minty quasi-equilibrium problems. 
\begin{theorem}\label{main-result-4}
Let $C \subset \R^n$ be a non-empty, compact and convex set, 
 $K:C\tos\R^n$ be a set-valued map and $f:\R^n\times \R^n\to\R$ be a bifunction.
 If the following assumptions hold
 \begin{enumerate}
\item $Q$ is lsc with non-empty convex values;
 \item $Q(C\times\R^n)$ is relatively compact;
 \item $\Fix(Q)$ is closed;
 \item the set-valued map $G:\Fix(Q)\tos\R^n$, defined by
 \[
 G(x,z)=\{y\in K(x):~f(y,z)>0\}
 \]
 is lsc;
 \item $f$ is properly quasimonotone on   $\co(K(C))$; 
\end{enumerate}  
then, there exists a projected solution of \eqref{MQEP}. Moreover, 
the set of projected solutions of \eqref{QEP} is non-empty whether $f$ has the upper sign property or $-f$ is pseudomonotone.
\end{theorem}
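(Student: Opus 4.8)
The plan is to follow the proof of Theorem~\ref{Main-result-3} almost verbatim, replacing the map $R$ by $G$; the one new ingredient needed to compensate for the absence of a convexity hypothesis on $G$ is proper quasimonotonicity, which I would exploit by passing to the convex hull of $G$. First I would record the analogue of Lemma~\ref{charac}: a point $x\in C$ is a projected solution of \eqref{MQEP} if and only if there is some $z\in\R^n$ with $(x,z)\in\Fix(Q)$ and $G(x,z)=\emptyset$. This is immediate, since $(x,z)\in\Fix(Q)$ means precisely $x\in P_C(z)$ and $z\in K(x)$, while $G(x,z)=\emptyset$ says that $f(y,z)\le0$ for every $y\in K(x)$, i.e. $z\in\CFP(f,K(x))$. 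I also note that assumption~1 forces $K$ to be convex-valued, because $Q(x,z)=P_C(z)\times K(x)$ must have convex values.

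The decisive point, which plays here the role that condition~5 of Theorem~\ref{Main-result-3} played there, is that proper quasimonotonicity prevents $z$ from belonging to $\co(G(x,z))$. Indeed, if $(x,z)\in\Fix(Q)$ then $z\in K(x)\subset\co(K(C))$ and $G(x,z)\subset K(x)\subset\co(K(C))$; were $z=\sum_i\lambda_i y_i$ a convex combination of points $y_i\in G(x,z)$, then proper quasimonotonicity on $\co(K(C))$ would give an index $i$ with $f(y_i,z)\le0$, contradicting $y_i\in G(x,z)$. Hence $z\notin\co(G(x,z))$ for every $(x,z)\in\Fix(Q)$.

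With these two facts I would define $S:C\times\R^n\tos C\times\R^n$ by $S(x,z)=Q(x,z)$ on $(C\times\R^n)\setminus\Fix(Q)$ and $S(x,z)=P_C(z)\times\co(G(x,z))$ on $\Fix(Q)$. Since $\co(G(x,z))\subset\co(K(x))=K(x)$, we have $S(x,z)\subset Q(x,z)$, so $S$ is convex-valued, $S(C\times\R^n)$ is relatively compact by assumption~2, and (using that $\Fix(Q)$ is closed, that $Q$ is lsc, and that $\co\circ G$ is lsc) Lemma~\ref{lemma-Naselli} yields that $S$ is lsc. Now I argue by dichotomy exactly as in Theorem~\ref{Main-result-3}. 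If $S(x_0,z_0)=\emptyset$ for some point, then, as $Q$ has non-empty values, necessarily $(x_0,z_0)\in\Fix(Q)$, and since $P_C(z_0)\neq\emptyset$ this forces $\co(G(x_0,z_0))=\emptyset$, hence $G(x_0,z_0)=\emptyset$; by the characterization $x_0$ is a projected solution of \eqref{MQEP}. Otherwise $S$ is non-empty-valued, and Corollary~\ref{FPTLSC} produces a fixed point $(x_0,z_0)\in S(x_0,z_0)$; such a point must lie in $\Fix(Q)$ (a fixed point of $Q$ outside $\Fix(Q)$ is impossible), whence $z_0\in\co(G(x_0,z_0))$, contradicting the decisive point above. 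Therefore the first alternative occurs and a projected solution of \eqref{MQEP} exists.

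For the final assertion I would observe that any projected solution of \eqref{MQEP} is automatically a projected solution of \eqref{QEP} as soon as $\CFP(f,K(x_0))\subset\EP(f,K(x_0))$: the witness $z_0\in\CFP(f,K(x_0))$ with $x_0\in P_C(z_0)$ then also lies in $\EP(f,K(x_0))$. On the convex set $K(x_0)$ this inclusion holds by \cite[Proposition~3.1]{ACI} when $f$ has the upper sign property, and by part~3 of Proposition~\ref{pseudo-EP-CFP} when $-f$ is pseudomonotone. The step I expect to be the main obstacle is the technical claim that $\co\circ G$ is lsc: it is exactly the passage to the convex hull (unavoidable because $G$, unlike $R$, is not assumed convex-valued) that makes Corollary~\ref{FPTLSC} applicable, so one must verify that lower semicontinuity of a set-valued map into $\R^n$ is preserved under taking convex hulls.
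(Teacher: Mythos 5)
Your proof is correct and follows essentially the same route as the paper: the same modified map ($Q$ off $\Fix(Q)$, $P_C(z)\times\co(G(x,z))$ on $\Fix(Q)$), the same use of Lemma~\ref{lemma-Naselli} and Corollary~\ref{FPTLSC}, the same contradiction via proper quasimonotonicity, and the same final appeal to \cite[Proposition 3.1]{ACI} and part 3 of Proposition~\ref{pseudo-EP-CFP}. The one step you flag as the main obstacle --- that lower semicontinuity is preserved under taking convex hulls --- is exactly the paper's Lemma~\ref{lemma-RW}, quoted from \cite[Theorem 5.9]{RW}, so nothing is missing.
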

In order to proof the previous result we need the following lemma, which can be found in \cite[Theorem 5.9]{RW}.
\begin{lemma}\label{lemma-RW}
Let $T:\R^n\tos\R^m$ be a set-valued map. If $T$ is lsc, then so is the set-valued map $\co(T):\R^n\tos\R^m$
defined by
\[
\co(T)(x)=\co(T(x)).
\]
\end{lemma}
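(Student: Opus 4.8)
The plan is to verify lower semicontinuity of $\co(T)$ directly at an arbitrary point $x_0\in\R^n$, using the sequential characterization of lower semicontinuity available in the metric space $\R^n$ (legitimate thanks to the equivalence of the ``by nets'' and ``by sets'' formulations recalled just after the definitions). Concretely, I would show that for every $y_0\in\co(T(x_0))$ and every sequence $(x_k)$ converging to $x_0$, there is a sequence $y_k\in\co(T(x_k))$ with $y_k\to y_0$. The decisive tool is Carath\'eodory's theorem, which bounds the number of points needed to express $y_0$ as a convex combination and thereby reduces the problem to finitely many applications of the lower semicontinuity of $T$ itself.

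First I would fix $y_0\in\co(T(x_0))$. By Carath\'eodory's theorem in $\R^m$ there exist at most $m+1$ points $y_0^1,\dots,y_0^{m+1}\in T(x_0)$ and coefficients $\lambda_1,\dots,\lambda_{m+1}\geq0$ with $\sum_{i=1}^{m+1}\lambda_i=1$ such that $y_0=\sum_{i=1}^{m+1}\lambda_i y_0^i$. Given any sequence $x_k\to x_0$, I apply the lower semicontinuity of $T$ at $x_0$ to each $y_0^i$ and obtain, for every $i$, a sequence $y_k^i\in T(x_k)$ with $y_k^i\to y_0^i$. Setting $y_k=\sum_{i=1}^{m+1}\lambda_i y_k^i$, each $y_k$ is a convex combination of points of $T(x_k)$, so $y_k\in\co(T(x_k))$, and since a convex combination with fixed weights depends continuously on its arguments, $y_k\to\sum_{i=1}^{m+1}\lambda_i y_0^i=y_0$. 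Thus $y_0$ lies in the inner limit of $\co(T)$ at $x_0$; as $y_0\in\co(T(x_0))$ and then $x_0$ are arbitrary, $\co(T)$ is lower semicontinuous.

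The step I expect to be the main (if modest) obstacle is the simultaneous selection of the approximating sequences $y_k^i$. The lower semicontinuity of $T$ yields, for each index $i$, an approximating sequence, but under the subnet formulation these selections a priori live along distinct subnets; to combine them one must pass to a common one. The point is precisely that Carath\'eodory caps the number of vertices at the fixed integer $m+1$, so a finite iterated extraction produces a single common subsequence along which all $m+1$ selections converge\,---\,this is exactly where finite dimensionality enters in an essential way. With the ``same-sequence'' form of sequential lower semicontinuity valid in $\R^n$, even this extraction is unnecessary, and the recombination step becomes entirely routine.
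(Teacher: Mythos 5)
Your proof is correct, but note that the paper does not actually prove this lemma at all: it is invoked as a quoted result from Rockafellar and Wets \cite[Theorem 5.9]{RW}, so your write-up supplies a self-contained argument where the paper gives only a citation. The argument you give is the standard one, and each step checks out: the reduction of $y_0$ to a finite convex combination, the selection of approximating sequences $y_k^i\in T(x_k)$, and the recombination $y_k=\sum_i\lambda_i y_k^i\in\co(T(x_k))$ with $y_k\to y_0$; your handling of the subnet issue (finitely many iterated extractions yield a common subnet, and in the metric setting the same-sequence form makes even this unnecessary) is also sound. Two refinements are worth recording. First, Carath\'eodory's theorem is dispensable: by the very definition of the convex hull, every $y_0\in\co(T(x_0))$ is already a convex combination of \emph{finitely many} points $y_0^1,\dots,y_0^N$ of $T(x_0)$, and your recombination argument uses only this finiteness, never the uniform bound $N\leq m+1$. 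Consequently, your closing claim that ``this is exactly where finite dimensionality enters in an essential way'' is an overstatement: the identical argument (with $N$ depending on $y_0$) proves that the convex hull of an lsc map is lsc in an arbitrary topological vector space, since convex combinations with fixed weights are continuous in their arguments there as well. This stronger observation is actually relevant to the paper, which repeatedly remarks that its results carry over to Banach spaces. Second, a cosmetic point: in the same-sequence characterization the selections $y_k\in\co(T(x_k))$ need only exist for $k$ large (the values $T(x_k)$ could a priori be empty for finitely many, or even infinitely many, small indices until lower semicontinuity forces them to be non-empty near $x_0$); this does not affect your conclusion, since lower semicontinuity is a condition on tails, but the statement should be phrased accordingly.
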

\begin{proof}[of Theorem \ref{main-result-4}]
Due to Lemma \ref{lemma-RW}, the set-valued map $\co(G):\Fix(Q)\tos \R^n$, which is defined as
\[
\co(G)(x,z)=\co(G(x,z)),
\]
is lsc too. Thanks to Lemma \ref{lemma-Naselli}, the set-valued map $\Phi:C\times \R^n\tos C\times \R^n$, defined as
\[
\Phi(x,z)=\left\lbrace\begin{array}{cl}
Q(x,z),&\mbox{if }(x,z)\in C\times\R^n\setminus \Fix(Q)\\
P_C(z)\times \co(G)(x,z),&\mbox{if }(x,z)\in \Fix(Q)
\end{array}\right.
\]
 is lsc.
 If $G$ is non-empty valued then, by  Corollary \ref{FPTLSC}, there exists $(x,z)\in \Fix(Q)$ such that $x\in P_C(z)$ and $z\in \co(G)(x,z)$. That means that there exist $z_1,z_2,\dots, z_n$ belonging to $G(x,z)$ such that
 $z\in\co(\{z_1,z_2,\dots,z_n\})$. However, we get a contradiction with the proper quasimonotonicity of $f$, because $f(z_i,z)>0$ for all $i$.
Hence, there exists $(x,z)\in C\times\R^m$ such that $(x,z)\in \Fix(Q)$ and $G(x,z)=\emptyset$. Therefore, $x$ is a projected solution of \eqref{MQEP}.

Finally, the existence of projected solutions for \eqref{QEP} is due to 
\cite[Proposition 3.1]{ACI} when $f$ has the upper sign property and 
 part 3 of Proposition \ref{pseudo-EP-CFP} if $-f$ is pseudomonotone.
\end{proof}

As a consequence of the previous result and Proposition \ref{lsc-int} we obtain the following corollary.

\begin{corollary}\label{P-result-GM}
Assume that $C \subset \R^n$ is compact, convex and non-empty set.
If the following hold
\begin{enumerate}
\item $K$ is closed and lsc with convex values;
\item $K(C)$ is a compact subset of $\R^n$;
\item $f$ is properly quasimonotone;
\item $f$ is quasiconvex with respect to its second argument;
\item $\{(x,y)\in K(C)\times K(C):~f(x,y)\leq0\}$ is closed;
\end{enumerate}
then, there exists a projected solution of \eqref{MQEP}. Moreover, 
the set of projected solutions of \eqref{QEP} is non-empty, whether $f$ has the upper sign property or $-f$ is pseudomonotone.
\end{corollary}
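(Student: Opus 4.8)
The plan is to verify, one at a time, the five conditions of Theorem \ref{main-result-4} and then to apply it verbatim; the lower semicontinuity of the auxiliary map $G$ will be the only delicate point, and it is there that Proposition \ref{lsc-int} enters. Throughout, recall that $Q(x,z)=P_C(z)\times K(x)$ and that, $C$ being convex, closed and non-empty, $P_C$ is a continuous single-valued map.

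First I would clear the four routine conditions. For condition 1, since $K$ is lsc with non-empty convex values (non-emptiness being the standing requirement on a constraint map) and $P_C$ is continuous and single-valued, Remark \ref{inf-fin} yields that $Q$ is lsc with non-empty convex values. For condition 2, one has $Q(C\times\R^n)\subset C\times K(C)$, and the right-hand side is compact because $C$ is compact and, by assumption 2, so is $K(C)$; hence $Q(C\times\R^n)$ is relatively compact. For condition 3, I would take a net $(x_\alpha,z_\alpha)$ in $\Fix(Q)$ converging to $(x_0,z_0)$: then $x_\alpha=P_C(z_\alpha)$ and $z_\alpha\in K(x_\alpha)$, so continuity of $P_C$ forces $x_0=P_C(z_0)$ while closedness of $K$ (assumption 1) forces $z_0\in K(x_0)$, whence $(x_0,z_0)\in\Fix(Q)$. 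Finally, condition 5 is immediate: proper quasimonotonicity of $f$ on $\R^n$ (assumption 3) restricts to proper quasimonotonicity on the convex set $\co(K(C))$.

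The core of the argument is condition 4, namely the lower semicontinuity on $\Fix(Q)$ of $G(x,z)=\{y\in K(x):f(y,z)>0\}$, and here I would use Proposition \ref{lsc-int}. The key is that $G$ factors as an intersection whose two factors depend on separate variables: putting $A(z)=\{y\in K(C):f(y,z)>0\}$, one has $G(x,z)=K(x)\cap A(z)$, since $K(x)\subset K(C)$. Assumption 5 states exactly that $\{(x,y)\in K(C)\times K(C):f(x,y)\leq0\}$ is closed, so after the harmless transposition of coordinates its complement, which is the graph of $A$, is open relative to $K(C)\times K(C)$; and $K$ is lsc. Proposition \ref{lsc-int} then gives that $(x,z)\mapsto K(x)\cap A(z)$ is lsc on $C\times K(C)$, and since $\Fix(Q)\subset C\times K(C)$, restricting to this subset preserves lower semicontinuity and yields condition 4. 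The obstacle I anticipate is purely one of bookkeeping: assumption 5 delivers openness only inside the compact set $K(C)$ (not in all of $\R^n$), so one must consistently view the values of $A$ and $K$ as lying in $K(C)$ and match the separated variable dependence of $A$ and $K$ to the format of Proposition \ref{lsc-int}.

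Once all five conditions are checked, Theorem \ref{main-result-4} delivers a projected solution of \eqref{MQEP}. The concluding assertion is then immediate from the ``moreover'' part of Theorem \ref{main-result-4}: if $-f$ is pseudomonotone, the Minty projected solution is promoted by part 3 of Proposition \ref{pseudo-EP-CFP}, whereas if $f$ has the upper sign property one invokes \cite[Proposition 3.1]{ACI}. It is in this last step that the quasiconvexity of $f$ in its second argument (assumption 4) is used; although that assumption plays no role in the \eqref{MQEP} part, it is what secures the passage to a projected solution of \eqref{QEP}.
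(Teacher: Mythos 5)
Your proof is correct, and it is precisely the derivation the paper announces but never writes out: the corollary is introduced there as a consequence of Theorem \ref{main-result-4} and Proposition \ref{lsc-int}, and your verification of the five hypotheses --- in particular the factorization $G(x,z)=K(x)\cap A(z)$ with $A(z)=\{y\in K(C):\,f(y,z)>0\}$, whose graph is (after transposing coordinates) relatively open in $K(C)\times K(C)$ by assumption 5 --- is exactly the missing bookkeeping. The proof the paper actually writes down, however, is an \emph{alternative} one: it sets $M(x)=\mbox{MEP}(f,K(x))$ and $T'(x,z)=P_C(z)\times M(x)$, obtains non-emptiness of $M(x)$ from \cite[Proposition 2.4]{ACI}, closedness of $M$ from Lemma \ref{dos}, hence upper semicontinuity of $T'$ (using compactness of $K(C)$), and then applies Himmelberg's theorem (Theorem \ref{HFPT}) together with Lemma \ref{fixedpoint}. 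The trade-off is explicit in the paper: the fixed-point route works verbatim in Banach spaces, which is why it is presented, whereas your route stays within the finite-dimensional Theorem \ref{main-result-4}; in exchange, your route reveals that assumption 4 is actually superfluous for this corollary, since Theorem \ref{main-result-4} convexifies $G$ internally via Lemma \ref{lemma-RW}.

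That last point exposes the one genuine error in your write-up, though it is an error of commentary rather than a gap in the argument: quasiconvexity of $f$ in its second argument is \emph{not} what secures the passage from \eqref{MQEP} to \eqref{QEP}. Part 3 of Proposition \ref{pseudo-EP-CFP} needs nothing beyond pseudomonotonicity of $-f$, and \cite[Proposition 3.1]{ACI} needs only the upper sign property together with convexity of the set $K(x_0)$, which assumption 1 already provides. So in your route assumption 4 is never used at all. Where it genuinely matters is in the paper's alternative proof: there it guarantees that $M$ has convex values (each $\mbox{MEP}(f,K(x))$ being an intersection of $K(x)$ with sublevel sets $\{z:\,f(y,z)\leq 0\}$), a requirement of Himmelberg's theorem.
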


As in Corollary \ref{P-result}, we will present an alternative proof of the previous corollary, which also works in Banach spaces. First, we define the following set-valued maps. Let $M:C\tos \R^n$ be defined by
\[
M(x)=\mbox{MEP}(f,K(x))
\] and let $T':C\times X\tos C\times X$ be defined by
\[
T'(x,z)=P_C(z)\times M(x).
\]

In a similar way to Lemma \ref{charac}, we characterize projected solutions of \eqref{MQEP} as fixed points of $T'$.

\begin{lemma}\label{fixedpoint}
A point $x_0 \in C$ is a projected solution of \eqref{MQEP} if and only if, there exists a $z_0 \in X$ such that $(x_0,z_0) \in \Fix (T')$.
\end{lemma}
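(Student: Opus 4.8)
The plan is to prove this lemma by directly unwinding the definition of a fixed point of $T'$ together with the product structure of its values, and then matching the resulting conditions against the definition of a projected solution of \eqref{MQEP}. Since $M(x)=\mbox{MEP}(f,K(x))$ is by construction the solution set of the Minty equilibrium problem on $K(x)$, no extra hypothesis on $f$ is required; in particular, in contrast with Lemma \ref{equiv}, we will not need $f$ to vanish on the diagonal, because here the solution set enters $T'$ directly rather than through an $\argmin$ reformulation.

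First I would observe that, by definition of $T'$, a pair $(x_0,z_0)\in C\times X$ belongs to $\Fix(T')$ exactly when $(x_0,z_0)\in T'(x_0,z_0)=P_C(z_0)\times M(x_0)$. The key (and essentially only) step is to note that membership in this Cartesian product decouples into the two simultaneous conditions $x_0\in P_C(z_0)$ and $z_0\in M(x_0)$. I would then simply read off that $z_0\in M(x_0)=\mbox{MEP}(f,K(x_0))$ together with $x_0\in P_C(z_0)$ is precisely the defining property of $x_0$ being a projected solution of \eqref{MQEP}, namely that there exists $z_0\in\mbox{MEP}(f,K(x_0))$ with $x_0\in P_C(z_0)$. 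The converse implication follows by reading the same chain of equivalences backwards, so both directions are obtained at once.

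There is no genuine obstacle to overcome here; the statement is a purely definitional equivalence, entirely analogous to the characterization of projected solutions of \eqref{QEP} given in Lemma \ref{charac}. The only point deserving slight care is verifying that the decoupling of the product membership is legitimate and that the witness $z_0$ appearing in the definition of projected solution may be taken to be the second coordinate of the fixed point, which is immediate from the construction of $T'$.
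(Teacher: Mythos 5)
Your proof is correct and matches the paper's treatment: the paper states Lemma \ref{fixedpoint} without proof, regarding it as an immediate definitional unwinding of $\Fix(T')$ into the two conditions $x_0\in P_C(z_0)$ and $z_0\in M(x_0)=\mbox{MEP}(f,K(x_0))$, which is exactly what you do. Your side remark that, unlike Lemma \ref{equiv}, no diagonal assumption on $f$ is needed here is also accurate, since $M$ is defined directly as the Minty solution set rather than through an $\argmin$.
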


Finally, we need the following lemma which is a very straightforward
adaptation of  \cite[Proposition 2.2]{JC-JZ2}.
\begin{lemma}\label{dos}
If $K$ is a closed and lower semicontinuous set-valued map, and
the set $\{(x,y)\in K(C)\times K(C):~f(x,y)\leq0\}$  is closed; then,
the set-valued map $M$ is closed. 
\end{lemma}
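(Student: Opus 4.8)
The plan is to verify the closedness of $M$ directly from the definition of a closed set-valued map, in close analogy with the argument for the map $S$ in the proof of Lemma \ref{uno}. First I would take an arbitrary net $(x_\alpha,w_\alpha)_{\alpha\in A}$ in the graph of $M$ converging to some point $(x_0,w_0)$. By definition of $M$, for each $\alpha$ one has $w_\alpha\in K(x_\alpha)$ together with the Minty inequality $f(u,w_\alpha)\leq0$ for every $u\in K(x_\alpha)$. The goal is to show $w_0\in M(x_0)$, which splits into the membership $w_0\in K(x_0)$ and the family of inequalities $f(y,w_0)\leq0$ for all $y\in K(x_0)$.

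The first membership is immediate from the hypothesis that $K$ is closed: since each pair $(x_\alpha,w_\alpha)$ lies in the graph of $K$ and the net converges to $(x_0,w_0)$, closedness gives $w_0\in K(x_0)$.

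For the Minty inequalities, I would fix an arbitrary $y\in K(x_0)$ and exploit the lower semicontinuity of $K$. Applying the lsc property to the net $(x_\alpha)$ converging to $x_0$ and to the point $y\in K(x_0)$, there exist a subnet $(x_{\varphi(\beta)})_{\beta\in B}$ and a net $(y_\beta)_{\beta\in B}$ converging to $y$ such that $y_\beta\in K(x_{\varphi(\beta)})$ for all $\beta$. Since $w_{\varphi(\beta)}\in M(x_{\varphi(\beta)})$ and $y_\beta$ is an admissible test point in $K(x_{\varphi(\beta)})$, the Minty condition yields $f(y_\beta,w_{\varphi(\beta)})\leq0$. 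Both $y_\beta$ and $w_{\varphi(\beta)}$ belong to $K(C)$, so each pair $(y_\beta,w_{\varphi(\beta)})$ lies in the set $\{(x,y)\in K(C)\times K(C):f(x,y)\leq0\}$, which is assumed closed. As $(y_\beta,w_{\varphi(\beta)})$ converges to $(y,w_0)$ (the second coordinate converging because it is a subnet of the convergent net $w_\alpha$), the limit belongs to this closed set, hence $f(y,w_0)\leq0$. Since $y\in K(x_0)$ was arbitrary, I conclude $w_0\in M(x_0)$, which is exactly the closedness of $M$.

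The main obstacle is purely one of bookkeeping rather than of substance: I must track the passage to the subnet furnished by lower semicontinuity while making sure the corresponding second coordinates $w_{\varphi(\beta)}$ still converge to $w_0$, and I must check that the approximating pairs genuinely lie in $K(C)\times K(C)$ so that the closedness hypothesis on the sublevel-type set can be invoked. Note that, unlike in Lemma \ref{uno}, no continuity of $f$ is assumed here; the closedness of the set $\{(x,y)\in K(C)\times K(C):f(x,y)\leq0\}$ replaces it and carries the final limiting step.
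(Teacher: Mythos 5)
Your proof is correct and is precisely the argument the paper has in mind: the paper does not write it out, stating only that the lemma is ``a very straightforward adaptation of \cite[Proposition 2.2]{JC-JZ2}'', and your net argument mirrors the paper's own proof of Lemma \ref{uno}, with the closedness of $\{(x,y)\in K(C)\times K(C):f(x,y)\leq0\}$ correctly substituted for the continuity of $f$ in the final limiting step.
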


\begin{proof}[of Corollary \ref{P-result-GM}]
By \cite[Proposition 2.4]{ACI}, $M$ is non-empty valued.
Moreover, by Lemma \ref{dos} and the fact that $M(C)$ is relatively compact, we deduce the upper semicontinuity of $M$. It is also clear that $T'$ is usc with convex and closed values. As $T'(C\times X)\subset C\times K(C)$, by Theorem \ref{HFPT}, $T'$ admits at least one fixed point. Therefore, $x_0$ is a projected solution of \eqref{MQEP} by Lemma \ref{fixedpoint}.

Finally, the existence of projected solution for \eqref{QEP} is due to 
\cite[Proposition 3.1]{ACI} when $f$ has the upper sign property and 
part 3 of Proposition \ref{pseudo-EP-CFP} if $-f$ is pseudomonotone.
\end{proof}

As a consequence of Theorem \ref{main-result-4} we recover the following result.
 
\begin{corollary}\cite[Theorem 4.5]{ACI}\label{T-ACI}
Let $f:\R^n\times\R^n\to\R$ be a bifunction, $C$ be a convex, compact and non-empty subset of $\R^n$,
and $K:C\tos C$ be a set-valued map. Suppose that the following properties hold
\begin{enumerate}
\item the map $K$ is closed and lsc with convex values, and $\inte(K(x))\neq\emptyset$, for all $x\in C$;
\item $f$ is properly quasimonotone;
\item $f$ is semistrictly quasiconvex and lower semicontinuous with respect to its second argument;
\item for all $x,y\in\R^n$ and all sequence $(y_k)_k\subset\R^n$ converging to $y$, the following implication holds
\[
\liminf_{k\to+\infty}f(y_k,x)\leq0~\Rightarrow~ f(y,x)\leq0,
\]
\item $f$ has the upper sign property.
\end{enumerate}
Then, the quasi-equilibrium problem admits a classical solution.
\end{corollary}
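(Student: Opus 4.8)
The plan is to deduce the statement directly from Theorem \ref{main-result-4}, exploiting that $K$ is a self-map. Indeed, by Remark \ref{rempro}, when $K$ maps $C$ into $C$ the projected solutions of \eqref{QEP} coincide with its classical solutions; hence it suffices to produce a projected solution of \eqref{QEP}, and for this I would verify the five hypotheses of Theorem \ref{main-result-4} and then invoke its ``Moreover'' part with the upper sign property (assumption 5).

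First I would dispatch the routine hypotheses. Since $\inte(K(x))\neq\emptyset$, the map $K$ has non-empty values, and it is lsc with convex values by assumption 1; by Remark \ref{inf-fin} this makes $Q$ lsc with non-empty convex values, giving hypothesis 1. Because $K(C)\subseteq C$ and $P_C$ takes values in $C$, we have $Q(C\times\R^n)\subseteq C\times C$, which is compact, so hypothesis 2 holds. For hypothesis 3 I would use the self-map identity $\Fix(Q)=\{(x,x):x\in\Fix(K)\}$ recorded earlier together with the closedness of $K$: if $x_k\to x$ with $x_k\in K(x_k)$ then $(x,x)$ lies in the graph of $K$, so $\Fix(K)$, and hence $\Fix(Q)$, is closed. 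Finally, proper quasimonotonicity of $f$ on $\R^n$ (assumption 2) restricts to $\co(K(C))$, which is hypothesis 5.

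The heart of the argument, and the step I expect to be the main obstacle, is hypothesis 4: the lower semicontinuity of the map $G(x,z)=\{y\in K(x):f(y,z)>0\}$ on $\Fix(Q)$. I would write $G(x,z)=K(x)\cap T_1(z)$ with $T_1(z)=\{y:f(y,z)>0\}$ and try to apply Proposition \ref{lsc-int} (with $T_1$ playing the role of the open-graph factor and $K$ the lsc factor), which reduces the task to showing that $\{(z,y):f(y,z)>0\}$ is open. Condition 4 gives exactly that each slice $\{y:f(y,z)>0\}$ is open in the first variable (its complement is sequentially closed), while the lower semicontinuity of $f$ in its second argument (assumption 3) gives that each slice $\{z:f(y,z)>0\}$ is open in the second variable. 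The delicate point is upgrading these two separate, coordinatewise openness properties to the joint openness needed for Proposition \ref{lsc-int}: separate semicontinuity does not in general imply joint semicontinuity, so this is where the real work lies. I would exploit the fact that on $\Fix(Q)$ the two arguments are tied together by $z=x$, reducing hypothesis 4 to lower semicontinuity of the single-variable map $x\mapsto K(x)\cap\{y:f(y,x)>0\}$ along $\Fix(K)$, and I expect the hypothesis $\inte(K(x))\neq\emptyset$ together with the semistrict quasiconvexity of $f(y,\cdot)$ to be what makes the diagonal extraction succeed, by allowing the approximating points furnished by the lower semicontinuity of $K$ to be chosen strictly inside the relevant superlevel set.

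Once hypothesis 4 is secured, Theorem \ref{main-result-4} yields a projected solution of \eqref{MQEP}, and since $f$ has the upper sign property its ``Moreover'' clause (via \cite[Proposition 3.1]{ACI}) upgrades this to a projected solution of \eqref{QEP}. As $K$ is a self-map, Remark \ref{rempro} turns this projected solution into a classical solution, which completes the proof.
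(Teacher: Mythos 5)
Your overall strategy --- reduce to Theorem \ref{main-result-4} via Remark \ref{rempro}, verify hypotheses 1, 2, 3 and 5 as routine, and invoke the ``Moreover'' clause with the upper sign property --- is exactly the paper's, and those parts of your argument are correct. You even reduce hypothesis 4 correctly to the lower semicontinuity of the single-variable map $x\mapsto\{y\in K(x):\,f(y,x)>0\}$ along $\Fix(K)$, which is precisely the map $G$ the paper works with. But there the proposal stops: the lower semicontinuity itself is never proved. You rightly diagnose that Proposition \ref{lsc-int} cannot be applied, since assumptions 3 and 4 give only separate (coordinatewise) openness of $\{(z,y):\,f(y,z)>0\}$ rather than joint openness, but what you offer in its place is only the expectation that $\inte(K(x))\neq\emptyset$ together with semistrict quasiconvexity ``makes the diagonal extraction succeed''. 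That is a conjecture, not an argument, and the conjectured mechanism is the wrong one: semistrict quasiconvexity concerns the second argument of $f$, whereas $G(x)$ is a superlevel set in the first argument, and it plays no role in this step of the paper's proof.

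What actually closes the gap is a short direct argument in which each hypothesis has a different, precise job. Fix $x\in\Fix(K)$, an open set $V$, and $y\in G(x)\cap V$, so $f(y,x)>0$. Assumption 4 says exactly that $W=\{w:\,f(w,x)>0\}$ is open in $\R^n$. Since $K(x)$ is convex with non-empty interior, $\inte(K(x))$ is dense in $K(x)$, so there exists $w\in\inte(K(x))\cap V\cap W$, i.e.\ a point $w\in\inte(K(x))\cap V$ with $f(w,x)>0$. From here only the \emph{second} argument of $f$ ever moves: lower semicontinuity of $f(w,\cdot)$ (assumption 3) gives a neighborhood $U$ of $x$ with $f(w,x')>0$ for all $x'\in U$; and since $w\in\inte(K(x))$ and $K$ is lsc with convex values, $w$ remains in $K(x')$ for all $x'$ close to $x$ (shrink $U$ if necessary --- this membership step, left implicit in the paper, is where the interiority hypothesis is really needed). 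Hence the single point $w$ witnesses $G(x')\cap V\neq\emptyset$ for every $x'\in U$, which is the desired lower semicontinuity. The idea you were missing is that replacing $y$ by one fixed interior test point $w$ --- legitimate thanks to assumption 4 and the density of $\inte(K(x))$ --- sidesteps the separate-versus-joint continuity problem entirely, because no simultaneous perturbation of both arguments of $f$ is ever required.
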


\begin{proof}
Let the set-valued map $G:\Fix(K)\tos\R^n$ be defined by \[G(x)=\{y\in K(x):~f(y,x)>0\}. \] It is enough to show that $G$ is lower semicontinuous.  
Let $V$ be an open subset such that $G(x)\cap V\neq \emptyset$ and let $y\in G(x)\cap V$. Since $\inte(K(x))\neq\emptyset$, there exists $w\in \inte(K(x))\cap V$ such that $f(w,x)>0$, due to assumption 4. Now, by the lower semicontinuity of $f$ in its second variable, there exists $U$, an open subset, with $x\in U$ such that
$f(w,x')>0$, for any $x'\in U$, that means $V\cap G(x')\neq\emptyset$. 
\end{proof}

As another direct consequence of  Theorem \ref{main-result-4}, we have an existence result for Minty equilibrium problems.
\begin{corollary}\label{coro-MEP}
Let $C \subset \R^n$ be a convex, compact and non-empty subset. Let also $f:C\times C\to\R$ be a properly quasimonotone bifunction. If the set-valued map $F:C\tos C$, defined as
\[
F(y)=\{x\in C:~f(x,y)>0\}
\]
is lsc, then there exists a solution of \eqref{cfp}. 
\end{corollary}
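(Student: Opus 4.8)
The plan is to derive this as the special case of Theorem \ref{main-result-4} obtained by taking the constant constraint map $K\equiv C$, that is $K(x)=C$ for every $x\in C$. First I would record the computations forced by this choice. Since $C$ is convex, closed and non-empty, $P_C$ is a continuous single-valued map, so $Q(x,z)=P_C(z)\times C$ is lsc with non-empty convex values, and $Q(C\times\R^n)\subset C\times C$ is relatively compact; this disposes of assumptions 1 and 2. For assumption 3, observe that $(x,z)\in\Fix(Q)$ means $x\in P_C(z)$ and $z\in K(x)=C$; but $z\in C$ gives $P_C(z)=\{z\}$, so $x=z$. Hence $\Fix(Q)$ is exactly the diagonal $\{(x,x):x\in C\}$, which is closed because $C$ is.

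Next I would identify the auxiliary map $G$ of Theorem \ref{main-result-4} along the diagonal. By definition $G(x,z)=\{y\in K(x):f(y,z)>0\}$, so on $\Fix(Q)$ one has
\[
G(x,x)=\{y\in C:f(y,x)>0\}=F(x),
\]
the very map assumed lsc in the statement. Since the diagonal embedding $x\mapsto(x,x)$ is a homeomorphism of $C$ onto $\Fix(Q)$, and the composition of an lsc set-valued map with a continuous single-valued map is again lsc, the hypothesis that $F$ is lsc yields assumption 4. Assumption 5 is immediate: $\co(K(C))=\co(C)=C$, on which $f$ is properly quasimonotone by hypothesis.

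With all five assumptions verified, Theorem \ref{main-result-4} furnishes a projected solution of the MQEP; more precisely, the proof of that theorem produces a pair $(x,z)\in\Fix(Q)$ with $G(x,z)=\emptyset$. In the present specialization $z=x$ and $G(x,x)=F(x)$, so the conclusion reads $F(x)=\emptyset$, i.e.\ $f(y,x)\le 0$ for all $y\in C$. This is exactly the statement that $x$ solves \eqref{cfp}, as required.

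I expect the only genuinely delicate point to be the bookkeeping of the arguments of $f$: one must check that emptiness of the map $G$ of Theorem \ref{main-result-4}, restricted to the diagonal, reproduces the Minty inequality $f(y,x)\le 0$ and not its transpose, and that the map $F$ in the present statement is literally $G$ read along the diagonal. Everything else—lower semicontinuity of $Q$ and of $G$, closedness of $\Fix(Q)$, and proper quasimonotonicity on the convex hull—collapses under the constant-map specialization and introduces no new ideas.
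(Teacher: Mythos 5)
Your proposal is correct and follows exactly the paper's route: the paper presents this corollary as a direct consequence of Theorem \ref{main-result-4} (with no written proof), and your specialization $K\equiv C$, identification of $\Fix(Q)$ with the diagonal, and reading of $G$ along the diagonal as $F$ supply precisely the verification the paper leaves implicit. The argument checks out in every detail, including the final step, which can alternatively be phrased via Remark \ref{rempro}: since $K$ is a self-map, the projected solution of \eqref{MQEP} produced by the theorem is a classical one, i.e.\ a solution of \eqref{cfp}.
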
 

We finish this section with  Theorem \ref{KKM}, which is inspired by \cite[Lemma 1]{Kfan2}.

Let $C$ be a non-empty and convex subset of $\R^n$.
A set-valued map $T:C\tos C$ is said to be a \emph{KKM map} if, for any $x_1,x_2,\dots,x_m\in C$ the following holds
\[
\co(\{x_1,x_2,\dots,x_m\})\subset \bigcup_{i=1}^m T(x_i).
\] 

Given a set-valued map $T:C\tos C$, we define the set-valued map $S:C\tos C$ as
\[
S(y)=\{x\in C:~y\notin T(x)\}.
\]
Clearly, it satisfies $\cap_{x\in C}T(x)=\{y\in C:~S(y)=\emptyset\}$.

\begin{theorem}\label{KKM}
Let $C$ be a non-empty and convex subset of $\R^n$ and $T:C\tos C$ be a KKM map such that $S$ is lsc. If there exists $K$, a non-empty, convex and compact subset of $C$, such that $S(C)\subset K$; then
\[
 \bigcap_{x\in C}T(x)\neq\emptyset.
\] 
\end{theorem}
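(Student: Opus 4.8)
The plan is to argue by contradiction, exploiting the observation recorded just before the theorem that $\bigcap_{x\in C}T(x)=\{y\in C:~S(y)=\emptyset\}$. Thus it suffices to produce a single point $y\in C$ with $S(y)=\emptyset$. I would assume to the contrary that $S(y)\neq\emptyset$ for \emph{every} $y\in C$, and then manufacture a fixed point of the convexified map $\co(S)$ that clashes with the KKM property of $T$. The hypothesis that $S$ is lsc (rather than that $T$ has closed values, as in the classical Fan lemma) is exactly what steers the argument toward a fixed-point route via Corollary \ref{FPTLSC} instead of a finite-intersection argument.

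The first step is to transfer everything onto the compact set $K$. Since $S(C)\subset K$ and $K\subset C$, the restriction $S|_K$ maps $K$ into $K$, remains lower semicontinuous (lower semicontinuity is inherited by restrictions), and, under the contradiction hypothesis, has non-empty values. By Lemma \ref{lemma-RW} the convexified map $\co(S):K\tos K$ given by $y\mapsto\co(S(y))$ is again lower semicontinuous, its values are non-empty and convex, and $\co(S)(K)\subset\co(K)=K$ because $K$ is convex, so $\co(S)(K)$ is relatively compact. As $K$ is non-empty, convex and closed (being compact in $\R^n$), Corollary \ref{FPTLSC} applies to $\co(S)$ and yields a fixed point $y_0\in\co(S(y_0))$.

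The contradiction is then immediate. Since $y_0$ lies in the convex hull of $S(y_0)$, by Carath\'eodory's theorem $y_0\in\co(\{x_1,\dots,x_m\})$ for finitely many $x_1,\dots,x_m\in S(y_0)\subset C$. By the very definition of $S$, each membership $x_i\in S(y_0)$ says $y_0\notin T(x_i)$, whence $y_0\notin\bigcup_{i=1}^m T(x_i)$. On the other hand, the KKM property of $T$ gives $\co(\{x_1,\dots,x_m\})\subset\bigcup_{i=1}^m T(x_i)$, so $y_0\in\bigcup_{i=1}^m T(x_i)$, a contradiction. Hence some $y\in C$ satisfies $S(y)=\emptyset$, and therefore $\bigcap_{x\in C}T(x)\neq\emptyset$.

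I expect the only genuine step to be the correct packaging of the hypotheses so that Corollary \ref{FPTLSC} is applicable on $K$: one must verify that passing to convex hulls preserves lower semicontinuity (handled by Lemma \ref{lemma-RW}, used exactly as in the proof of Theorem \ref{main-result-4}) and that the convexified restriction is a self-map of the compact convex set $K$, for which the inclusion $S(C)\subset K$ together with convexity of $K$ is precisely what is needed. Once the fixed point $y_0$ is in hand, the final clash between the choice of the $x_i$ and the KKM inclusion is purely formal.
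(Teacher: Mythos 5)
Your proof is correct and follows essentially the same route as the paper: convexify $S$ via Lemma \ref{lemma-RW}, obtain a fixed point of $\co(S)$ from Corollary \ref{FPTLSC}, and derive a contradiction with the KKM property of $T$. In fact, your step of restricting everything to the compact convex set $K$ is slightly more careful than the paper's own argument, which applies Corollary \ref{FPTLSC} to $\co(S)$ on $C$ itself even though $C$ is not assumed closed; the restriction to $K$ is exactly what makes that application rigorous.
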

\begin{proof}
Due to Lemma \ref{lemma-RW}, the set-valued map $\co(S):C\tos C$, which is defined as
\[
\co(S)(x)=\co(S(x))
\]
is lsc too. If $S$ has non-empty values then, by  Corollary \ref{FPTLSC}, there exists $x\in C$ such that $x\in \co(S)(x)$. But this contradicts the fact that $T$ is a KKM map. Hence, there exists $y\in C$ such that $S(y)=\emptyset$.
\end{proof}

Notice that  Theorem \ref{KKM} can not be deduced from \cite[Lemma 1]{Kfan2}.
On the other hand, we can see that Corollary \ref{coro-MEP} is also a consequence of  Theorem \ref{KKM}.

\begin{remark}
It is important to notice that we can generalize the concept of projected solution if we introduce a set-valued map $P$ from $\R^n$ to $C$ with similar properties of the projection.
\end{remark}

\section{Applications}\label{applications}
In this section, as was mentioned earlier, we consider applications to the study of solutions of three particular problems. The first one is a special optimization problem known as a quasi-optimization problem, the second one is a quasi-variational inequality problem and finally,
 the third one is a generalized Nash equilibrium problem.
We will establish sufficient conditions to guarantee the existence of projected solutions for these problems, which were introduced
by Aussel, Sultana and Vetrivel in \cite{ASV-2016}.

\subsection{Quasi-optimization}
Given a real-valued function $h:\R^n\to \R$ and a set-valued map $K:C\tos C$, where
$C$ is a subset of $\R^n$, the {\em quasi-optimization problem} (QOpt) is described as
\begin{align}\label{QO}
\mbox{find }x_0\in K(x_0)\mbox{ such that }~~ x_0\in\argmin_{z\in K(x_0)} h(z).\tag{QOpt}
\end{align}

The terminology of quasi-optimization problem comes from \cite{GMP00} (see 
formula (8.3) and Proposition 12) and has been recently used in \cite{AC-2013,JC-JZ2,FKa10}. It 
emphasizes the fact that it is not a standard optimization problem since the 
constraint set depends on the solution, and it also highlights the parallelism 
to quasi-equilibrium problems.

As in \cite{ASV-2016}, a point $x_0\in C$ is said to be a projected solution of the 
QOpt if there exists $z_0\in \displaystyle\argmin_{z\in K(x_0)} h(z)$ such that $x_0\in P_C(x_0)$.

Using a reformulation as quasi-equilibrium problem, similar to the one in \cite{JC-JZ2},
we will characterize the projected solutions of \eqref{QO}. In that sense, associated to $h$, 
let us define the bifunction $f^h:\R^n\times \R^n\to\R$ as
\[
 f^h(x,y)=h(y)-h(x).
\]
Clearly, $f^h$ vanishes on the diagonal of $\R^n\times \R^n$. The following lemma follows from the definition of $f^h$.
\begin{lemma}\label{QEP-QOpt}
Let $x_0\in C$. 
Then, $x_0$ is a projected solution of \eqref{QO} if and only if, $x_0$ is a projected solution of 
\eqref{QEP} associated to $f^h$ and $K$.
\end{lemma}

We now show the existence of projected solutions for \eqref{QO} without the continuity nor the quasiconvexity of $h$, which is a consequence of Theorem~\ref{main-result-4} and it generalizes \cite[Theorem 4.1]{ASV-2016}.
\begin{theorem}
Let $C$ be a closed, convex and non-empty subset of $\R^n$, let $h:\R^n\to\R$ be a function and 
let $K:C\tos \R^n$ be a set-valued map such that $K(C)$ is relatively compact. 
If the following assumptions hold
\begin{enumerate}
\item $K$ is lsc with convex values;
\item the set $D=\{(x,z)\in C\times\R^n:~z\in K(x)\mbox{ and }x\in P_C(z)\}$ is closed;
\item the set-valued map $H:D\tos\R^n$ defined as
\[
H(x,z)=\{y\in K(x):~h(z)>h(y)\}
\] is lsc with convex values;
\end{enumerate}
then, there exists a projected solution of \eqref{QO}.
\end{theorem}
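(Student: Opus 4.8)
The plan is to reduce the statement to an application of Theorem \ref{main-result-4} via the equilibrium reformulation provided by Lemma \ref{QEP-QOpt}. Define the bifunction $f^h(x,y)=h(y)-h(x)$, which vanishes on the diagonal. By Lemma \ref{QEP-QOpt}, a point $x_0\in C$ is a projected solution of \eqref{QO} if and only if it is a projected solution of \eqref{QEP} associated to $f^h$ and $K$. So it suffices to verify that the hypotheses (1)--(5) of Theorem \ref{main-result-4} hold for $f^h$ and $K$, and then appeal to the final clause of that theorem which gives a projected solution of \eqref{QEP} provided $f^h$ has the upper sign property or $-f^h$ is pseudomonotone.

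First I would set up the auxiliary maps of Theorem \ref{main-result-4}: $Q(x,z)=P_C(z)\times K(x)$ and the associated fixed-point set. Since $K$ is lsc with convex values and $P_C$ is a continuous function (here $C$ is closed and convex, so by Remark \ref{rempro} the projection is single-valued and continuous), $Q$ is lsc with non-empty convex values, giving hypothesis (1). Because $K(C)$ is relatively compact and $P_C(C)=C$ is compact, $Q(C\times\R^n)$ is relatively compact, giving hypothesis (2). The key observation is that the set $D$ in the theorem statement is exactly $\Fix(Q)$: indeed $(x,z)\in\Fix(Q)$ means $x\in P_C(z)$ and $z\in K(x)$, which is the defining condition of $D$. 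Hypothesis (2) of the theorem then makes $\Fix(Q)=D$ closed, giving hypothesis (3). For hypothesis (4), I would note that the map $G$ of Theorem \ref{main-result-4} applied to $f^h$ is $G(x,z)=\{y\in K(x):f^h(y,z)>0\}=\{y\in K(x):h(z)>h(y)\}$, which is precisely the map $H$ assumed lsc in hypothesis (3) of the present theorem.

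The step requiring the most care is hypothesis (5): proper quasimonotonicity of $f^h$ on $\co(K(C))$. This is where I expect the main work, but it is in fact automatic and does not need continuity or quasiconvexity of $h$. Given $x_1,\dots,x_m$ and $x\in\co(\{x_1,\dots,x_m\})$, let $i$ be an index attaining $\min_j h(x_j)$; then $f^h(x_i,x)=h(x)-h(x_i)\ge 0$ fails to be the right sign in general, so instead choose $i$ with $h(x_i)=\max_j h(x_j)$, giving $f^h(x_i,x)=h(x)-h(x_i)$. To see this is $\le 0$ one observes that $h(x)$ need not be controlled, so the correct argument is: proper quasimonotonicity of $f^h$ asks for some $i$ with $h(x)\le h(x_i)$, and taking the index $i$ maximizing $h(x_j)$ works precisely when $h(x)\le\max_j h(x_j)$; this inequality holds because $f^h$ is always properly quasimonotone, a standard fact following from choosing the maximizing vertex. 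Finally, to invoke the last clause of Theorem \ref{main-result-4}, I would check that $-f^h$ is pseudomonotone: if $-f^h(x,y)=h(x)-h(y)\ge 0$ then $h(x)\ge h(y)$, whence $-f^h(y,x)=h(y)-h(x)\le 0$, as required. With all five hypotheses verified and $-f^h$ pseudomonotone, Theorem \ref{main-result-4} yields a projected solution of \eqref{QEP} for $f^h$ and $K$, which by Lemma \ref{QEP-QOpt} is a projected solution of \eqref{QO}.
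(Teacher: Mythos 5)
Your reduction via Lemma \ref{QEP-QOpt}, the identifications $D=\Fix(Q)$ and $H=G$, and the pseudomonotonicity of $-f^h$ are all fine; but the step you yourself flag as requiring the most care is where the proof collapses. Proper quasimonotonicity of $f^h$ on $\co(K(C))$ is \emph{not} automatic: it asks that for every $x\in\co(\{x_1,\dots,x_m\})$ there be some $i$ with $h(x)\le h(x_i)$, i.e.\ $h(x)\le\max_j h(x_j)$, which is precisely quasiconvexity of $h$ on the relevant convex set --- and the whole point of the theorem is that $h$ is \emph{not} assumed quasiconvex. Your justification (``this inequality holds because $f^h$ is always properly quasimonotone'') is circular, and the claim is false: for $h(y)=-\|y\|^2$, $x_1\neq x_2$ and $x=\frac{1}{2}(x_1+x_2)$, one has $h(x)>\max\{h(x_1),h(x_2)\}$, so $f^h(x_i,x)>0$ for both $i$. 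Hence hypothesis (5) of Theorem \ref{main-result-4} cannot be verified and that theorem, as stated, does not apply. The hypothesis you never used --- that $H$ has \emph{convex values} --- is exactly what replaces proper quasimonotonicity: since $f^h(z,y)<0\iff f^h(y,z)>0$, the map $H$ coincides with the map $R(x,z)=\{y\in K(x):\,f^h(z,y)<0\}$ of Theorem \ref{Main-result-3}, whose hypothesis (4) ($R$ lsc with convex values on $\Fix(Q)$) is your assumption (3), and whose hypothesis (5) reads $f^h(z,z)\ge 0$, which holds trivially. That is the correct reduction (the paper's own pointer to Theorem \ref{main-result-4} notwithstanding); alternatively, one can rerun the proof of Theorem \ref{main-result-4} observing that convexity of values gives $\co(G)(x,z)=H(x,z)$, so a fixed point $z\in H(x,z)$ yields $h(z)>h(z)$ outright, with no monotonicity needed.

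There is a second gap in your verification of hypothesis (2): you assert ``$P_C(C)=C$ is compact,'' but $C$ is only assumed \emph{closed} in this theorem, and $Q(C\times\R^n)=C\times K(C)$ is not relatively compact when $C$ is unbounded; moreover, Theorems \ref{Main-result-3} and \ref{main-result-4} both take $C$ compact as a standing assumption, so neither applies verbatim. The repair is a restriction argument: let $N=\overline{\co}(K(C))$, which is compact and convex because $K(C)$ is relatively compact, and let $C_0=\co(P_C(N))$, a compact convex subset of $C$. Every element of $\Fix(Q)$ lies in $C_0\times N$ (if $x\in P_C(z)$ and $z\in K(x)$, then $z\in N$ and $x\in P_C(N)$), and $P_{C_0}$ agrees with $P_C$ on $N$ since $P_C(N)\subset C_0\subset C$. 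One then applies Theorem \ref{Main-result-3} to the data $(C_0,\,K|_{C_0},\,f^h)$, for which all hypotheses (including relative compactness of the relevant image) hold, and checks that the resulting projected solution, having $z_0\in K(x_0)\subset N$, is a projected solution of \eqref{QO} with respect to the original set $C$.
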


Notice that under continuity of $h$ and lower semicontinuity of $K$, the set-valued map $H$ is lsc too. However, the converse is not true in general as we can see in the following example.

\begin{example}
Let $h:\R\to\R$ be the function defined as
\[
h(x)=\left\lbrace\begin{array}{cl}
x,&x<1\\
2,&x=1\\
x+2,&x>1
\end{array}\right.
\] which is clearly quasiconvex but not continuous. 
  Now, consider the constraint map $K:C\tos C$ defined as $
K(x)=C=[0,2]$.
It is clear that $K$ is lsc with convex values. Moreover, it is not difficult to verify that the set-valued map $H:C\tos C$, defined by
\[
H(x)=\{y\in C:h(x)>h(y)\}=[0,x]
\]
is lsc.
\end{example}

We can see that under quasiconvexity of $h$, if $K$ is convex valued, then $H$ is  convex valued too. However, the following example shows us that the converse does not hold in general.

\begin{example}
Let $h:\R\to\R$ be a function  and let $K:[0,2]\tos [0,2]$ be a set-valued map both defined by 
\[
h(x)= \left\lbrace\begin{array}{cl}
                   |x-\frac{1}{2}|, & x\leq 1\\
                   |x-\frac{3}{2}|,& 1<x
                  \end{array}\right.
\quad \text{and} \quad  K(x)=\left\lbrace\begin{array}{cl}
                   \left[1/2,1\right]\cup\{x\},&0\leq x<1/2  \\
                   \left[x,1\right], &1/2\leq x\leq 1\\
                   \left[1,x\right],&1<x\leq 2 .
                  \end{array}\right. 
\]

\begin{figure}[h!]
\centering
\begin{tikzpicture}[scale=1.65]
\fill[color=white](0,0)--(2,0)--(2,-1)--(0,-1)--(0,0);
\draw[->] (-0.6,0)--(2.7,0)node[below]{$x$};
\draw[->] (0,-0.3)--(0,1.2)node[left]{$y$};
\draw[thick,<->](-0.5,1)--(0.5,0) -- (1,0.5) -- (1.5,0)--(2.5,1);
\draw(0.5,0)node[below]{$\displaystyle\frac{1}{2}$};
\draw(1.5,0)node[below]{$\displaystyle\frac{3}{2}$};
\draw(2,0)node[below]{$2$};
\draw(0,0.35)node[left]{$\displaystyle\frac{1}{2}$};
\draw[dashed](2,0)--(2,0.5);
\draw[dashed](0,0.5)--(2,0.5);
\end{tikzpicture}
\hfill
\begin{tikzpicture}[scale=1.5]
\draw[->] (-0.3,0)--(2.4,0)node[below]{$x$};
\draw[->] (0,-0.3)--(0,2.4)node[left]{$y$};
\fill[color=gray!50](0,0.5)--(0,1)--(1,1)--(0.5,0.5)--(0,0.5);
\draw(0,0.5)--(0,1)--(1,1)--(0.5,0.5)--(0,0.5);
\draw(0,0)--(0.5,0.5);
\fill[color=gray!50](1,1)--(2,2)--(2,1)--(1,1);
\draw(1,1)--(2,2)--(2,1)--(1,1);
\draw(1,-0.1)node[below]{$1$};
\draw(2,-0.1)node[below]{$2$};
\draw(0,1)node[left]{$1$};
\draw(0,2)node[left]{$2$};
\draw[dashed](1,0)--(1,1);
\draw[dashed](2,0)--(2,1);
\draw[dashed](0,2)--(2,2);
\end{tikzpicture}
\caption{Graphs of $h$ and $K$.} \label{graph}
\end{figure}
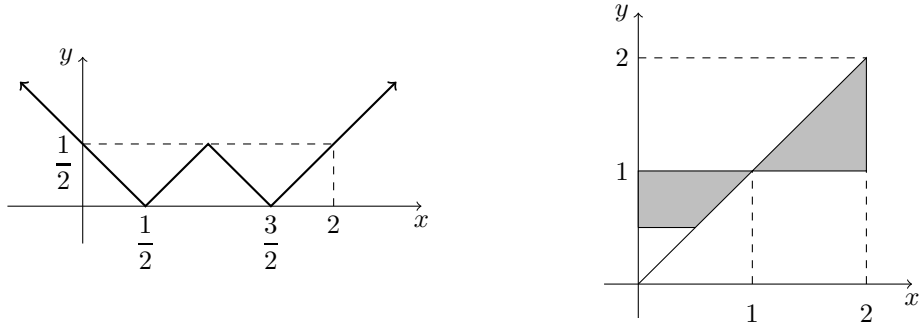

Figure \ref{graph} shows the graphs of $h$ and $K$. It is clear that the map $H:[0,2]\tos [0,2]$, 
defined as
\[
H(x)=\{y\in K(x):~h(y)<h(x)\}
\]
 has convex values. However, neither $h$ is quasiconvex nor $K$ is convex valued.
\end{example}

\subsection{Quasi-variational inequalities}
Let $C$ be a non-empty subset of $\R^n$ and $T,K$ two set-valued maps 
$T:\R^n\tos \R^n$ and $K:C\tos \R^n$. As in \cite{ASV-2016}, 
a point $x_0\in C$ is said to be a \emph{projected solution} of the quasi-variational inequality if, there exists $z_0\in X$ such that 
\begin{enumerate}
\item $x_0$ is a projection of $z_0$ onto $C$, $x_0\in P_C(z_0)$;
\item $z_0$ is a solution of the Stampacchia variational inequality associated to $T$ and $K(x_0)$, that is, $z_0\in K(x_0)$ and
\[
\mbox{there exists }z_0^*\in T(z_0)\mbox{ such that }\langle z_0^*, y-z_0\rangle\geq 0,~\text{for all } y\in K(x_0).
\]
\end{enumerate}

In a similar way, $x_0\in C$ is said to be a \emph{projected solution} of the Minty quasi-variational inequality if, there exists $z_0\in X$ such that
\begin{enumerate}
\item $x_0$ is the projection of $z_0$ onto $C$, $x_0\in P_C(z_0)$;
\item $z_0$ is a solution of the Minty variational inequality associated to $T$ and $K(x_0)$, that is, $z_0\in K(x_0)$ and
\[
\mbox{there exists }z_0^*\in T(z_0)\mbox{ such that }\langle y^*, y-z_0\rangle\geq 0,~\text{for all } y\in K(x_0).
\]
\end{enumerate}

It is clear that under pseudomonotonicity every projected solution of the Stampacchia variational inequality is a projected solution of the Minty  variational inequality. The converse holds if we assume the upper sign-continuity.


The following lemma establishes the relationship between the projected solution of a (Minty) quasi-variational inequality and the projected solution of a (Minty) quasi-equilibrium problem. The proof is a direct consequence of the definitions.

\begin{lemma}\label{Equiv-QEP-QVI}
Let $C$ be a subset of $\R^n$, $T:\R^n\tos \R^n$ be a set-valued map with compact values and
$K:C\tos \R^n$ be a set-valued map. 
Any projected solution of the (Minty) quasi-variational inequality associated to $T$ and $K$  is a projected solution of the (Minty) quasi-equilibrium problem associated to $f_T$ and $K$, where $f_T$ is defined as \eqref{T-f}. And conversely.
\end{lemma}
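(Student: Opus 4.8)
The plan is to prove the equivalence in Lemma~\ref{Equiv-QEP-QVI} by unwinding both definitions and exploiting the precise relationship between the Stampacchia/Minty variational inequality and the equilibrium problem associated to the representative bifunction $f_T$ defined in \eqref{T-f}. The key observation, already recorded in the excerpt just after the definition of $f_T$, is that when $T$ has compact values, a point $z_0$ solves the variational inequality associated to $T$ and a fixed set $D$ if and only if $z_0$ solves the equilibrium problem associated to $f_T$ and $D$. Indeed, $z_0\in\mbox{EP}(f_T,D)$ means $f_T(z_0,y)\geq0$ for all $y\in D$, i.e. $\sup_{z_0^*\in T(z_0)}\langle z_0^*,y-z_0\rangle\geq0$ for all $y\in D$; since $T(z_0)$ is compact the supremum is attained, and a standard argument (the one alluded to in the paragraph following \eqref{T-f}) shows this is equivalent to the existence of a single $z_0^*\in T(z_0)$ with $\langle z_0^*,y-z_0\rangle\geq0$ for all $y\in D$.

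First I would fix the notation: a point $x_0\in C$ is a projected solution of the quasi-variational inequality associated to $T$ and $K$ exactly when there exists $z_0$ with $x_0\in P_C(z_0)$ and $z_0$ solving the Stampacchia variational inequality on the set $K(x_0)$. By the definition of projected solution of the QEP (applied to the bifunction $f_T$ and the map $K$), $x_0$ is a projected solution of the QEP associated to $f_T$ and $K$ when there exists $z_0\in\mbox{EP}(f_T,K(x_0))$ with $x_0\in P_C(z_0)$. Thus the two notions share the same projection condition $x_0\in P_C(z_0)$, and the whole proof reduces to verifying, for the fixed constraint set $D=K(x_0)$, that ``$z_0$ solves the Stampacchia variational inequality on $K(x_0)$'' is equivalent to ``$z_0\in\mbox{EP}(f_T,K(x_0))$''. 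This is precisely the pointwise equivalence recalled above, so the Stampacchia case follows at once.

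For the Minty case I would argue symmetrically: $x_0$ is a projected solution of the Minty quasi-variational inequality when there is $z_0$ with $x_0\in P_C(z_0)$ and $z_0$ solving the Minty variational inequality on $K(x_0)$, the latter meaning $z_0\in K(x_0)$ together with the ``other-endpoint'' inequality $\langle y^*,y-z_0\rangle\geq0$ for all $y\in K(x_0)$ with $y^*\in T(y)$. Reading off the definition of \eqref{MQEP} through $f_T$, membership $z_0\in\mbox{MEP}(f_T,K(x_0))$ means $f_T(y,z_0)\leq0$ for all $y\in K(x_0)$, i.e. $\sup_{y^*\in T(y)}\langle y^*,z_0-y\rangle\leq0$, which is exactly $\langle y^*,y-z_0\rangle\geq0$ for every $y^*\in T(y)$ and every $y\in K(x_0)$. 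Hence the Minty variational inequality condition on $K(x_0)$ and the condition $z_0\in\mbox{MEP}(f_T,K(x_0))$ coincide, and the projection condition again transfers verbatim.

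The main (and essentially only) obstacle is the careful handling of the Stampacchia equivalence, where one must pass between ``$\sup_{z_0^*}\langle z_0^*,y-z_0\rangle\geq0$ for every $y$'' and ``there exists a single $z_0^*$ working for all $y$ simultaneously.'' The nontrivial direction — producing one common $z_0^*$ — is where compactness of $T(z_0)$ is indispensable, and it is the classical step standing behind the remark after \eqref{T-f}; in the Minty direction no such issue arises because the quantifier ``for all $y^*\in T(y)$'' already matches the supremum defining $f_T$, so that equivalence is a direct translation. Everything else is bookkeeping of the shared projection requirement $x_0\in P_C(z_0)$, so I would present the proof compactly, emphasizing that it is indeed ``a direct consequence of the definitions'' once the pointwise VI--EP correspondence is invoked.
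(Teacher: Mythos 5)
Your overall strategy --- reduce to a pointwise comparison on the fixed constraint set $K(x_0)$, observe that the projection requirement $x_0\in P_C(z_0)$ is common to both notions, and translate the variational inequality condition into the sign of $f_T$ --- is exactly what the paper intends; its entire proof is the sentence ``a direct consequence of the definitions.'' Your treatment of the Minty case is complete and correct: $f_T(y,z_0)\leq 0$ means $\sup_{y^*\in T(y)}\langle y^*,z_0-y\rangle\leq 0$, which holds if and only if $\langle y^*,y-z_0\rangle\geq 0$ for every $y^*\in T(y)$, so the two conditions coincide verbatim, with no compactness needed. The easy Stampacchia direction (a strong VI solution lies in $\EP(f_T,K(x_0))$) is also fine.

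The gap is the step you yourself single out as the crux: passing from ``$f_T(z_0,y)\geq 0$ for all $y\in K(x_0)$'' to the existence of a \emph{single} $z_0^*\in T(z_0)$ with $\langle z_0^*,y-z_0\rangle\geq 0$ for all $y\in K(x_0)$. Compactness of $T(z_0)$ only gives, for each fixed $y$, some $z_y^*\in T(z_0)$ with $\langle z_y^*,y-z_0\rangle\geq 0$; exchanging the quantifiers is a minimax statement (Sion's theorem applied to the bilinear function $(y,z^*)\mapsto\langle z^*,y-z_0\rangle$ on $K(x_0)\times T(z_0)$), and it requires $T(z_0)$ to be \emph{convex} as well as compact, and $K(x_0)$ convex. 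Without convexity of the values of $T$ the step is false, and so is the ``conversely'' part for the strong Stampacchia formulation used in the paper: take $C=\{y\in\R^2:\|y\|\leq 1\}$, $K(x)=C$ for all $x\in C$, and $T\equiv\{(1,0),(-1,0)\}$. Then $f_T(0,y)=|y_1|\geq 0$ for all $y\in C$, so $x_0=z_0=0$ is a projected solution of \eqref{QEP}; but $0\in P_C(z)$ forces $z=0$, and neither element of $T(0)$ satisfies $\langle z^*,y\rangle\geq 0$ for all $y\in C$, so $x_0=0$ is not a projected solution of the Stampacchia quasi-variational inequality. This imprecision is admittedly inherited from the paper itself (the remark after \eqref{T-f} and the hypotheses of the lemma ask only for compact values), but since your argument rests entirely on this ``classical step,'' you must either add convexity of the values of $T$ and invoke the minimax theorem explicitly, or interpret the Stampacchia inequality in its weak form (for every $y$ there exists $z^*$), for which your proof is complete as written.
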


In \cite{ASV-2016}, the authors established two results on the existence of projected solutions for quasi-variational inequality problems on finite dimensional spaces. They used generalized monotonicity, specifically pseudomonotonicity and quasimonotonicity, for their proof. In a similar spirit,
we present another existence result using proper quasimonotonicity.
\begin{theorem}\label{QVI}
Let $T:\R^n\tos \R^n$ be a set-valued map with compact values, $C$ be a compact, convex and non-empty subset of $\R^n$, and $K:C\tos \R^n$  be a set-valued map.
If the following assumptions hold
\begin{enumerate}
 \item $K$ is closed and lsc with non-empty and convex values;
 \item $K(C)$ is compact;
 \item $T$ is properly quasimonotone;
 \item $\{(x,y)\in K(C)\times K(C):~\sup_{x^*\in T(x)}\langle x^*,y-x\rangle\leq0\}$ is closed;
\end{enumerate}
then, the Minty quasi-variational inequality admits at least one projected solution. Moreover, if $T$ is upper sign-continuous,
then the Stampacchia quasi-variational inequality has a projected solution.
\end{theorem}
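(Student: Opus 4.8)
The plan is to reduce Theorem~\ref{QVI} to Corollary~\ref{P-result-GM} applied to the representative bifunction $f_T$ defined in \eqref{T-f}, and then transfer the conclusion back to the quasi-variational inequality via Lemma~\ref{Equiv-QEP-QVI}. First I would recall that since $T$ has compact values, $f_T(x,y)=\sup_{x^*\in T(x)}\langle x^*,y-x\rangle$ is well-defined and finite for all $x,y$, and that $f_T$ vanishes on the diagonal because $\langle x^*,x-x\rangle=0$. The strategy is to verify each of the five hypotheses of Corollary~\ref{P-result-GM} for the pair $(f_T,K)$, invoke that corollary to obtain a projected solution of \eqref{MQEP} associated to $f_T$ and $K$, and finally apply Lemma~\ref{Equiv-QEP-QVI} to translate this into a projected solution of the Minty quasi-variational inequality.

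The heart of the argument is the hypothesis-matching. Hypotheses (1) and (2) of the theorem (closedness and lower semicontinuity of $K$ with convex values, and compactness of $K(C)$) are literally hypotheses (1) and (2) of Corollary~\ref{P-result-GM}, so those transfer directly. For hypothesis (3) of the corollary I would use the well-known equivalence mentioned in the paper just before Proposition~\ref{S2-uno}: a set-valued map $T$ with non-empty compact values is properly quasimonotone if and only if its representative bifunction $f_T$ is; hence proper quasimonotonicity of $T$ gives proper quasimonotonicity of $f_T$. Hypothesis (4) of the corollary, quasiconvexity of $f_T$ in its second argument, holds because $y\mapsto\langle x^*,y-x\rangle$ is affine for each fixed $x^*$, and a supremum of affine (hence convex) functions is convex, so in particular $f_T(x,\cdot)$ is convex and therefore quasiconvex. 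Finally, hypothesis (5) of the corollary is exactly hypothesis (4) of the theorem, since $\{(x,y)\in K(C)\times K(C):f_T(x,y)\leq0\}=\{(x,y)\in K(C)\times K(C):\sup_{x^*\in T(x)}\langle x^*,y-x\rangle\leq0\}$ by the definition of $f_T$. With all five hypotheses verified, Corollary~\ref{P-result-GM} yields a projected solution $x_0$ of \eqref{MQEP} for $(f_T,K)$, and Lemma~\ref{Equiv-QEP-QVI} turns this into a projected solution of the Minty quasi-variational inequality, proving the first assertion.

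For the ``moreover'' part, I would appeal to the second conclusion of Corollary~\ref{P-result-GM}, which guarantees a projected solution of \eqref{QEP} provided either $f_T$ has the upper sign property or $-f_T$ is pseudomonotone. The natural route is the upper sign property: the paper states, in the discussion of the upper sign condition, that $T$ is upper sign-continuous if and only if $f_T$ has the upper sign property. So assuming $T$ is upper sign-continuous gives the upper sign property for $f_T$, which through Corollary~\ref{P-result-GM} produces a projected solution of \eqref{QEP} for $(f_T,K)$; Lemma~\ref{Equiv-QEP-QVI} then delivers a projected solution of the Stampacchia quasi-variational inequality.

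The step I expect to require the most care is ensuring the generalized-monotonicity and continuity transfers between $T$ and $f_T$ are applied under exactly the right compact-valuedness hypotheses, since these equivalences (proper quasimonotonicity of $T$ versus $f_T$, and upper sign-continuity of $T$ versus the upper sign property of $f_T$) rely on $T$ having non-empty compact values. The statement assumes compact values but one should confirm non-emptiness is available where needed; in the worst case the equivalences can be checked directly from the definitions using the compactness of $T(x)$ to guarantee the supremum in \eqref{T-f} is attained. Everything else is a routine verification that each corollary hypothesis is met.
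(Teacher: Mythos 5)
Your proposal is correct and is essentially the paper's own proof: the paper likewise reduces the quasi-variational inequality to the quasi-equilibrium problem for the representative bifunction $f_T$, applies its general existence machinery for projected solutions of \eqref{MQEP}, and transfers back through Lemma \ref{Equiv-QEP-QVI}, using the same equivalence between upper sign-continuity of $T$ and the upper sign property of $f_T$ for the Stampacchia part. The only difference is bookkeeping: you invoke Corollary \ref{P-result-GM} (whose five hypotheses your verification matches verbatim), whereas the paper cites Theorem \ref{main-result-4} directly with a terse ``clearly'' covering exactly the checks you spell out, and the paper itself notes, in the remark following Corollary \ref{E-QVI}, that the route through Corollary \ref{P-result-GM} also works and has the added benefit of extending to Banach spaces.
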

\begin{proof}
Clearly, $f_T$ (defined as \eqref{T-f}) and $K$ satisfy the conditions of Theorem \ref{main-result-4}.
The result follows from Lemma \ref{Equiv-QEP-QVI}.
\end{proof}

As a direct consequence of the previous theorem, we have the following result on the existence of solutions to the quasi-variational inequality.
\begin{corollary}\label{E-QVI}
Let $T:\R^n\tos \R^n$ be a set-valued map with compact values, $C$ be a compact, convex and non-empty subset of $\R^n$, and $K:C\tos C$ be a set-valued map.
If the following assumptions hold
\begin{enumerate}
 \item $K$ is closed and lower semicontinuous with convex values;
 \item $T$ is properly quasimonotone;
 \item $T$ is upper sign-continuous;
 \item $\{(x,y)\in C\times C:~\sup_{x^*\in T(x)}\langle x^*,y-x\rangle\leq0\}$ is closed;
\end{enumerate}
then, the Stampacchia quasi-variational inequality admits at least one solution. 
\end{corollary}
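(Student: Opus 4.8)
The plan is to obtain Corollary \ref{E-QVI} as the self-map specialization of Theorem \ref{QVI}: since the present $K$ maps $C$ into $C$, it is in particular a map $K:C\tos\R^n$, so I would verify that every hypothesis of Theorem \ref{QVI} is recovered from the stronger, self-map data given here, and then read off the conclusion, upgrading the resulting projected solution to a genuine solution by means of the self-map structure.

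The one hypothesis of Theorem \ref{QVI} that is not stated verbatim in the corollary is the compactness of $K(C)$, and establishing it is the only step that requires attention. Here the self-map assumption is used together with the closedness of $K$: because $K(x)\subseteq C$ for every $x\in C$ we have $K(C)\subseteq C$, and $K(C)$ is closed. Indeed, given a net $(y_i)$ in $K(C)$ with $y_i\to y_0$, pick $x_i\in C$ with $y_i\in K(x_i)$; by compactness of $C$ a subnet satisfies $x_i\to x_0\in C$, so $(x_i,y_i)\to(x_0,y_0)$ lies in the closure of the graph of $K$, and closedness of $K$ yields $y_0\in K(x_0)\subseteq K(C)$. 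Being a closed subset of the compact set $C$, the set $K(C)$ is compact, which is assumption 2 of Theorem \ref{QVI}.

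The remaining hypotheses transfer directly. Corollary assumption 1 supplies assumption 1 of Theorem \ref{QVI} (the non-emptiness of the values being understood), and corollary assumption 2 is assumption 3 of that theorem. For assumption 4 of Theorem \ref{QVI} I would use that $K(C)\subseteq C$ gives $K(C)\times K(C)\subseteq C\times C$, while $K(C)\times K(C)$ is closed by the previous paragraph; hence
\[
\{(x,y)\in K(C)\times K(C):~\sup_{x^*\in T(x)}\langle x^*,y-x\rangle\leq0\}
\]
is the intersection of the closed set of corollary assumption 4 with the closed set $K(C)\times K(C)$, and is therefore closed.

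With all hypotheses of Theorem \ref{QVI} in force, that theorem produces a projected solution of the Minty quasi-variational inequality, and invoking the upper sign-continuity of $T$ (corollary assumption 3) for the ``moreover'' clause upgrades this to a projected solution of the Stampacchia quasi-variational inequality. Finally, since $K$ is a self-map from $C$ to $C$, Remark \ref{rempro}, transported to quasi-variational inequalities via Lemma \ref{Equiv-QEP-QVI}, identifies projected solutions with classical ones, so the projected solution just obtained is in fact a solution of the Stampacchia quasi-variational inequality. As indicated, the only nontrivial point is the compactness of $K(C)$; everything else is a transcription of hypotheses.
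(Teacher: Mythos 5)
Your proof is correct and follows the same route as the paper, which presents this corollary as a direct consequence of Theorem \ref{QVI} applied to the self-map $K$ viewed as a map into $\R^n$. The paper leaves all details implicit, and your proposal fills in exactly the steps it takes for granted: the compactness of $K(C)$ (via closedness of $K$ and compactness of $C$), the restriction of the closedness hypothesis to $K(C)\times K(C)$, and the collapse of projected solutions to classical ones under the self-map structure.
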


\begin{remark} 
Here a few remarks are needed.
\begin{enumerate}
\item Theorem \ref{QVI} is also a consequence of Corollary \ref{P-result-GM} and hence, it works in Banach spaces, where $T$ could be considered with weak$^*$-compact values.

\item An analogous result to the previous corollary was proved in \cite[Proposition 3.5]{AC-2013}, where they did not require our assumption 4. Instead, they assumed the following technical condition: for all $x_\alpha\to x$ and all $y_\alpha\to y$
\begin{align}\label{technical}
\liminf \sup_{x_\alpha^*\in T(x_\alpha)}\langle x_\alpha^*,y_\alpha-x_\alpha\rangle\leq0~\Rightarrow~
\sup_{x^*\in T(x)}\langle x^*,y-x\rangle\leq0.
\end{align}
This condition implies assumption 4 in Corollary \ref{E-QVI}, but it is stronger. Indeed, consider for instance the set-valued map $T:\R\tos\R$, defined by
\[
T(x)=\left \lbrace\begin{array}{cl}
\{x\},&x\neq0\\
\{1\},&x=0
\end{array}\right.,
\]
which satisfies assumption 4 in Corollary \ref{E-QVI}. However, for $x_n=1/n$ and $y_n=1$, for all $n\in\N$, we have
\[
\liminf\langle x_n,y_n-x_n\rangle=0\mbox{ and }\langle 1,1-0\rangle>0.
\]
Thus, it fails to satisfy implication \eqref{technical}. 
\end{enumerate}
\end{remark}

Another result on the existence of projected solutions for quasi-variational inequality problems without generalized monotonicity is given below.

\begin{theorem}\label{QVI-2}
Let $C$ be a convex, compact and non-empty subset of $\R^n$ and consider the set-valued maps $K: C\tos \R^n$ and $T:\R^n\tos\R^n$. Assume that
\begin{enumerate}
\item $K$ is lsc with non-empty convex values;
\item the set $\{(x,z)\in C\times\R^n:~z\in K(x)\mbox{ and }x\in P_C(z)\}$ is closed;
\item the set $\{(x,y)\in \R^n\times \R^n:~\sup_{x^*\in T(x)}\left\langle x^*,y-x\right\rangle \geq0\}$ is closed.
\end{enumerate}
Then, the Stampacchia quasi-variational inequality has a projected solution.
\end{theorem}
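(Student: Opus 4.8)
The plan is to reduce the quasi-variational inequality to a quasi-equilibrium problem through its representative bifunction and then invoke Theorem~\ref{Main-result-3}. Concretely, assuming (as throughout Section~\ref{applications}) that $T$ has non-empty compact values, let $f_T$ be the bifunction defined in \eqref{T-f}. By Lemma~\ref{Equiv-QEP-QVI}, a point $x_0\in C$ is a projected solution of the Stampacchia quasi-variational inequality associated to $T$ and $K$ if and only if it is a projected solution of \eqref{QEP} associated to $f_T$ and $K$. Hence it suffices to produce a projected solution of the latter, and I would do this by verifying that $f_T$ and $K$ satisfy the five hypotheses of Theorem~\ref{Main-result-3}.

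Four of these are routine. Since $C$ is convex, compact and non-empty, $P_C$ is a continuous single-valued map by Remark~\ref{rempro}, so by Remark~\ref{inf-fin} the map $Q(x,z)=P_C(z)\times K(x)$ is lsc with non-empty convex values, giving hypothesis~1. For hypothesis~3, note that $(x,z)\in\Fix(Q)$ is equivalent to $x\in P_C(z)$ and $z\in K(x)$, so $\Fix(Q)$ is precisely the set assumed closed in hypothesis~2 of the present theorem. For hypothesis~4, the function $y\mapsto f_T(z,y)=\sup_{z^*\in T(z)}\langle z^*,y-z\rangle$ is a supremum of affine functions, hence convex; thus each strict sublevel set $F(z)=\{y:f_T(z,y)<0\}$ is convex, and intersecting with the convex sets $K(x)$ makes $R$ convex-valued. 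Moreover, hypothesis~3 of the present theorem asserts exactly that the complement of $\gra(F)$ is closed, i.e. $F$ has open graph, so Proposition~\ref{lsc-int} (with the open-graph map $F$ and the lsc map $K$) shows that $R(x,z)=F(z)\cap K(x)$ is lsc. Finally, for hypothesis~5, since $\langle z^*,z-z\rangle=0$ one has $f_T(z,z)=0\ge 0$ whenever $T(z)\neq\emptyset$, so the condition holds on $M\subset K(C)$.

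The delicate point, and the step I expect to be the main obstacle, is hypothesis~2 of Theorem~\ref{Main-result-3}: the relative compactness of $Q(C\times\R^n)$. Since every point of $C$ is its own projection, one computes $Q(C\times\R^n)=C\times K(C)$, so this reduces to the relative compactness of $K(C)$, which the compactness of $C$ does not by itself control. In fact boundedness of the range of $K$ is genuinely needed: for $C=\{0\}$, $K(0)=[0,+\infty)$ and $T(z)=\{-1\}$ for all $z$, hypotheses 1--3 all hold yet no projected solution exists. I would therefore carry the argument through under the (implicit) assumption that $K(C)$ is relatively compact, exactly as in the quasi-optimization theorem of this section, after which Theorem~\ref{Main-result-3} yields a projected solution of \eqref{QEP} for $f_T$ and $K$, and Lemma~\ref{Equiv-QEP-QVI} transports it back to the desired projected solution of the Stampacchia quasi-variational inequality.
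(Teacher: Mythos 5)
Your route is exactly the paper's: its entire proof of Theorem \ref{QVI-2} reads ``Clearly, $f_T$ (defined as \eqref{T-f}) and $K$ satisfy the conditions of Theorem \ref{Main-result-3}. The result follows from Lemma \ref{Equiv-QEP-QVI}.'' Your verification of hypotheses 1, 3, 4 and 5 of Theorem \ref{Main-result-3} is precisely what that ``Clearly'' compresses: $Q$ is lsc with non-empty convex values by Remark \ref{rempro} and Remark \ref{inf-fin}; $\Fix(Q)$ is exactly the set assumed closed in assumption 2; $f_T(z,\cdot)$ is a supremum of affine functions, hence convex, so $F$ and $R$ are convex-valued; assumption 3 states exactly that the complement of the graph of $F$ is closed, so $F$ has open graph and Proposition \ref{lsc-int} gives the lower semicontinuity of $R$; and $f_T(z,z)=0$ whenever $T(z)\neq\emptyset$.

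The obstacle you single out, however, is a genuine gap in the paper rather than in your argument. Nothing in assumptions 1--3 of Theorem \ref{QVI-2} controls the range of $K$: since $Q(C\times\R^n)=C\times K(C)$, hypothesis 2 of Theorem \ref{Main-result-3} holds if and only if $K(C)$ is relatively compact, and this is an independent assumption (it appears explicitly in Theorem \ref{QVI} and in the quasi-optimization theorem of the same section, but was dropped here). Your counterexample is valid: with $C=\{0\}$, $K(0)=[0,+\infty)$ and $T\equiv\{-1\}$, assumption 1 holds trivially, the set in assumption 2 is $\{0\}\times[0,+\infty)$, which is closed, the set in assumption 3 is $\{(x,y):\ y\leq x\}$, which is closed, yet a projected solution would require some $z_0\geq0$ with $z_0-y\geq0$ for all $y\geq0$, which is impossible. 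So Theorem \ref{QVI-2} as printed is false, the paper's ``Clearly'' is unjustified at exactly this point, and your version of the statement --- with the relative compactness of $K(C)$ added --- together with your proof is the correct repair; it then coincides with the paper's intended argument.
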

\begin{proof}
Clearly, $f_T$ (defined as \eqref{T-f}) and $K$ satisfy the conditions of  Theorem \ref{Main-result-3}.
The result follows from  Lemma \ref{Equiv-QEP-QVI}.
\end{proof}

The following examples show that Theorem \ref{QVI} and Theorem \ref{QVI-2} are independent.
\begin{example}
Let $K:[-1,1]\tos\R$ be a set-valued map and $T:\R\to\R$ be a single-valued map defined as
\[
K(x)=\left[-3,0\right] \quad \mbox{and} \quad T(x)=\left\lbrace\begin{array}{cl}
-1,&x<0\\
1,&x\geq0
\end{array}\right..
\] 
Clearly, $K$ is closed and lsc with convex values, and $T$ is properly quasimonotone. 
It is not difficult to see that the set
\[
\{(x,y)\in \left[-3,0\right]\times \left[-3,0\right]:~\left\langle T(x),y-x\right\rangle\leq0 \}
\]
is closed. However the following set
\[
\{(x,y)\in \R\times \R:~\left\langle T(x),y-x\right\rangle\geq0 \}
\]
is not closed. Hence, we can use  Theorem \ref{QVI} in order to guarantee the existence of projected solutions.
\end{example}
\begin{example}
Let  $K:[0,1]\tos\R$ and
 $T:\R\tos\R$ be set-valued maps defined by
\[
K(x)= \left\lbrace \begin{array}{cl}
](x+1)/2,2],&x\in [0,1[\\
\left[1,2\right],&x=1
\end{array}\right. 
\]
 and $ T(x)=\{-1,1\}, \mbox{ for all }x\in \R$. Clearly, $T$ is not properly quasimonotone and
 $K$ is lsc with convex values, but it is not closed. On the other hand,
 \[
\{(x,y)\in \R\times \R:~\sup_{x^*\in T(x)}\left\langle x^*,y-x\right\rangle \geq0\}=\R\times\R
 \] 
 and, as the projection onto $C=[0,1]$ is single valued and continuous, we have
 \[
\{(x,z)\in [0,1]\times\R:~z\in K(x)\mbox{ and }x\in P_C(z)\}=\{1\}\times[1,2]. 
 \]
Therefore, the existence of projected solutions is due to Theorem \ref{QVI-2} and not Theorem \ref{QVI}. 
\end{example}

\subsection{GNEPs}
A generalized Nash equilibrium problem (GNEP) consists of
 $p$ players. Each player $\nu$ controls the decision variable $x^\nu\in C_\nu$, where $C_\nu$ is a non-empty convex and closed subset
 of $\R^{n_\nu}$. We denote by $x=(x^1,\dots,x^p)\in \prod_{\nu=1}^p C_\nu=C$ the vector formed by all these decision
 variables and by $x^{-\nu}$, we denote the strategy vector of all the players different from player $\nu$. The set of all such
vectors will be denoted by $C^{-\nu}$. We sometimes write  $(x^\nu,x^{-\nu})$ instead of $x$ in order to emphasize the $\nu$-th player's
variables within $x$. Note that this is still the vector $x=(x^1,\dots,x^\nu,\dots,x^p)$, and the notation $(x^\nu,x^{-\nu})$
does not mean that the block components of $x$ are reordered in such a way that $x^\nu$ becomes the first block.
Each player $\nu$ has an objective function $\theta_\nu:C\to\R$ that depends on all player's strategies. Each player's strategy must belong to a set identified by the set-valued map $K_\nu:C^{-\nu}\tos C_\nu$ in the sense that the strategy
space of player $\nu$ is $K_\nu(x^{-\nu})$, which depends on the rival player's strategies $x^{-\nu}$.
Given the strategy $x^{-\nu}$, player $\nu$ chooses a strategy $x^\nu$ such that it solves the following optimization problem
\begin{align}\label{opt-nu}
\min_{x^\nu} \theta_\nu(x^\nu,x^{-\nu}),\mbox{ subject to }x^\nu\in K_\nu(x^{-\nu}),
\end{align}
for any given strategy vector $x^{-\nu}$ of the rival players. The solution set of problem \eqref{opt-nu}
is denoted by ${\rm Sol}_\nu(x^{-\nu})$. Thus, a {\em generalized Nash equilibrium} is a vector 
$\hat{x}$ such that $\hat{x}^\nu\in {\rm Sol}_\nu(\hat{x}^{-\nu})$, for any $\nu$.\\
\par

Suppose that $n$ and $n_\nu$ are natural numbers that satisfy $n= \sum_{\nu=1}^p n_\nu$. For any $\nu\in\{1,2,...,p\}$, let $C_\nu$ be a non-empty subset of $\R^{n_\nu}$, and let 
$K_\nu:C^{-\nu}\tos \R^{n_\nu}$ and
$\theta_\nu:\R^n\to\R$, be set-valued maps. As in \cite{ASV-2016}, a vector $\hat{x}$ of $C$ is said to be a {\em projected solution} of the generalized Nash equilibrium problem 
if, there exists $\hat{z}\in \R^n$ such that:
\begin{enumerate}
 \item $\hat{x}$ is a projection of $\hat{z}$ onto $C$;
\item  $\hat{z}$ is a solution of the Nash equilibrium problem defined by all functions $\theta_\nu$, where $\nu \in \{1,2,...,p\}$,
and the constraint sets $K_\nu(\hat{x})$, $\nu \in \{1,2,...,p\}$, that is, for any $\nu$, $\hat{z}^\nu\in K_\nu(\hat{x}^{-\nu})$
is a solution of the following optimization problem
\begin{align}\label{NEP}
\min_{z^\nu} \theta_\nu(z^\nu,\hat{z}^{-\nu}),\mbox{ subject to }z^\nu\in K_\nu(\hat{x}^{-\nu}) .
\end{align}
\end{enumerate}

Associated to a GNEP, there is a bifunction $f^{NI}:\R^n\times \R^n\to\R$, defined by
\[
 f^{NI}(x,y)=\sum_{\nu=1}^p\{\theta_\nu(y^\nu,x^{-\nu})-\theta_\nu(x^\nu,x^{-\nu})\},
\]
which is called Nikaid\^o-Isoda bifunction and was introduced in \cite{Nikaido-Isoda}. This bifunction has a simple interpretation. Suppose that $x$ and $y$ are two feasible points
for the GNEP. Each summand in the definition represents the improvement in the objective function of player $\nu$
when he changes his action from $x^\nu$ to $y^\nu$, while all the other players stick to the choice $x^{-\nu}$.

Additionally, we define the set-valued map $K:C\tos \R^n$ as 
\[
K(x)=\prod_{\nu=1}^p K_\nu(x^{-\nu}).
\]
Now, we can characterize all projected solution of a GNEP in the following result. 

\begin{lemma}\label{equiv-2}
A vector $\hat{x}$ is a projected solution of a GNEP if and only if, it is a projected solution of \eqref{QEP} associated to $f^{NI}$ and
$K$.
\end{lemma}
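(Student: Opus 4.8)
The plan is to unwind both definitions and show they describe the same pair of conditions. A vector $\hat{x}\in C$ is a projected solution of the GNEP exactly when there exists $\hat{z}\in\R^n$ with $\hat{x}\in P_C(\hat{z})$ and, for each $\nu$, the block $\hat{z}^\nu$ solves the optimization problem \eqref{NEP}, i.e. $\hat{z}^\nu\in K_\nu(\hat{x}^{-\nu})$ and $\theta_\nu(\hat{z}^\nu,\hat{z}^{-\nu})\leq\theta_\nu(z^\nu,\hat{z}^{-\nu})$ for all $z^\nu\in K_\nu(\hat{x}^{-\nu})$. On the other hand, $\hat{x}$ is a projected solution of \eqref{QEP} associated to $f^{NI}$ and $K$ precisely when there exists $\hat{z}\in\R^n$ with $\hat{x}\in P_C(\hat{z})$ and $\hat{z}\in\mbox{EP}(f^{NI},K(\hat{x}))$. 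Since the projection condition $\hat{x}\in P_C(\hat{z})$ is literally identical in both formulations, the entire content of the lemma reduces to the equivalence, for a fixed $\hat{z}$, of the two inner conditions.

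First I would record that $\hat{z}\in K(\hat{x})=\prod_\nu K_\nu(\hat{x}^{-\nu})$ if and only if $\hat{z}^\nu\in K_\nu(\hat{x}^{-\nu})$ for every $\nu$, which aligns the feasibility requirements of the two sides. The substantive step is to show that $\hat{z}\in\mbox{EP}(f^{NI},K(\hat{x}))$, meaning $f^{NI}(\hat{z},y)\geq 0$ for all $y\in K(\hat{x})$, is equivalent to $\hat{z}^\nu$ solving \eqref{NEP} for each $\nu$. Writing out the Nikaid\^o--Isoda bifunction at the base point $\hat{z}$, we have
\[
f^{NI}(\hat{z},y)=\sum_{\nu=1}^p\bigl\{\theta_\nu(y^\nu,\hat{z}^{-\nu})-\theta_\nu(\hat{z}^\nu,\hat{z}^{-\nu})\bigr\}.
\]
The key observation is the product structure of $K(\hat{x})$: each test vector $y$ ranges independently over the factors $K_\nu(\hat{x}^{-\nu})$.

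For the implication that Nash solutions yield an equilibrium point, if each $\hat{z}^\nu$ solves \eqref{NEP}, then every summand above is nonnegative for any $y\in K(\hat{x})$, so the sum is nonnegative and $\hat{z}\in\mbox{EP}(f^{NI},K(\hat{x}))$. For the converse, I would exploit the freedom to vary a single block at a time: fixing an index $\nu$ and an arbitrary $w^\nu\in K_\nu(\hat{x}^{-\nu})$, take $y$ equal to $\hat{z}$ except in its $\nu$-th block, where we put $w^\nu$; this $y$ lies in $K(\hat{x})$ precisely because of the product structure, and all summands except the $\nu$-th vanish, so $f^{NI}(\hat{z},y)\geq 0$ collapses to $\theta_\nu(w^\nu,\hat{z}^{-\nu})\geq\theta_\nu(\hat{z}^\nu,\hat{z}^{-\nu})$, which is exactly optimality in \eqref{NEP}. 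I do not expect a genuine obstacle here; the only point requiring care is the block-decoupling argument, namely verifying that perturbing one player's variable keeps the vector feasible in $K(\hat{x})$ and kills all but one term of the sum. This is where the Cartesian product form of $K$ and the additive separable form of $f^{NI}$ do all the work, and it is the reason the lemma holds verbatim as a direct consequence of the definitions.
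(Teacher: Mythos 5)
Your proof is correct and follows essentially the same route as the paper: the forward direction by summing the nonnegative per-player improvements (which the paper dismisses as ``clear''), and the converse by the block-substitution $y=(w^\nu,\hat{z}^{-\nu})\in K(\hat{x})$, which annihilates all but the $\nu$-th term of $f^{NI}$ and yields exactly the optimality of $\hat{z}^\nu$ in \eqref{NEP}. The only difference is that you spell out the easy direction and the feasibility alignment explicitly, which the paper leaves implicit.
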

\begin{proof}
Clearly, every projected solution of a GNEP is a projected solution of \eqref{QEP} associated to $f^{NI}$ and $K$. Conversely, if $\hat{x}\in C$ and there exists $\hat{z}\in K(x)$ such that $\hat{x}\in P_C(\hat{z})$, and
\begin{align}\label{ine-N}
 f^{NI}(\hat{z},y)\geq0,~\mbox{ for all }y\in K(\hat{x}),
\end{align}
then, for all $z^\nu\in K_\nu(\hat{x}^{-\nu})$, the vector $(z^\nu,\hat{z}^{-\nu})\in K(\hat{x})$ and the inequality in \eqref{ine-N}
becomes
\begin{align*}
\theta_\nu(\hat{z}^\nu,\hat{z}^{-\nu})\leq \theta_\nu(z^\nu,\hat{z}^{-\nu}),
\end{align*}
that is, $\hat{z}^\nu$ is a solution of \eqref{NEP}.
\end{proof}

Thanks to  Lemma \ref{equiv-2}, we can deduce from Theorem \ref{P-result} the following result on the existence of projected solutions of a GNEP, which generalizes \cite[Theorem 4.2]{ASV-2016}.

\begin{theorem}\label{FGNEP}
For any $\nu \in \{1,2, ...,p \}$, let $C_\nu$ be a non-empty, closed and convex subset of $\R^{n_\nu}$, 
$\theta_\nu:\R^n\to\R$ be a function and $K_\nu:C^{-\nu}\tos \R^{n-n_\nu}$ be a set-valued map. Then, the GNEP
admits a projected solution if
\begin{enumerate}
\item for each $\nu$, $\theta_\nu$ is continuous and convex with respect to the $x^{\nu}$ variable;
\item for each $\nu$, the map $K_\nu$ is lsc with non-empty and convex values, and $K_\nu(C)$ is relatively compact;
\item the set $D=\{(x,z)\in C\times\R^n:~x\in P_C(z)\mbox{ and }z\in K(x)\}$ is closed.
\end{enumerate}
\end{theorem}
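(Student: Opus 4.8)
The plan is to translate the GNEP into a quasi-equilibrium problem for the Nikaid\^o--Isoda bifunction and then feed it into the machinery of Section~\ref{PS-QEP}. First I would set $f=f^{NI}$ and $K(x)=\prod_{\nu=1}^{p}K_\nu(x^{-\nu})$, and recall that by Lemma~\ref{equiv-2} a vector $\hat{x}$ is a projected solution of the GNEP if and only if it is a projected solution of \eqref{QEP} associated with $f^{NI}$ and $K$. Thus it is enough to exhibit a projected solution of that quasi-equilibrium problem.

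Next I would verify that $(f^{NI},K)$ carries the required structure. The bifunction $f^{NI}$ is continuous, being a finite sum of the continuous $\theta_\nu$, and it vanishes on the diagonal because the two sums in $f^{NI}(x,x)$ cancel. For fixed $x$, $y\mapsto f^{NI}(x,y)=\sum_\nu\theta_\nu(y^\nu,x^{-\nu})-\sum_\nu\theta_\nu(x^\nu,x^{-\nu})$ is a sum of the functions $y^\nu\mapsto\theta_\nu(y^\nu,x^{-\nu})$, each convex in its own block $y^\nu$ by hypothesis~1, while the last sum is constant in $y$; hence $f^{NI}(x,\cdot)$ is convex, in particular quasiconvex. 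On the constraint side, $K$ has nonempty convex values (products of the nonempty convex $K_\nu(x^{-\nu})$), it is lsc because a finite product of lsc maps is lsc, and $K(C)\subseteq\prod_\nu K_\nu(C^{-\nu})$ is relatively compact by hypothesis~2.

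The main obstacle is that $C=\prod_\nu C_\nu$ is only closed and convex, while the existence theorems demand a compact domain; I would overcome it by truncating $C$ without disturbing the projection. Set $N=\overline{K(C)}$, which is compact, and $C'=\overline{\co(P_C(N))}$. Since $C$ is closed, convex and nonempty, $P_C$ is a continuous single-valued map by Remark~\ref{rempro}, so $C'$ is a nonempty compact convex subset of $C$. The crucial observation is that $P_{C'}(z)=P_C(z)$ for every $z\in N$: the closest point of $C$ to $z$ lies in $P_C(N)\subseteq C'$, hence it is already the closest point of the smaller set $C'$. Because $K(C')\subseteq N$, this yields $\Fix(Q')=D\cap(C'\times\R^n)$ for the truncated map $Q'(x,z)=P_{C'}(z)\times K(x)$, which is closed by hypothesis~3; and any projected solution of \eqref{QEP} on $C'$ is automatically one on $C$, since the defining conditions only involve $z\in K(x_0)\subseteq N$.

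Finally I would apply Theorem~\ref{Main-result-3} on $C'$, whose hypotheses are precisely what the previous steps supply. By Remark~\ref{inf-fin} the continuity of $P_{C'}$ together with the lower semicontinuity and nonempty convex values of $K$ make $Q'$ lsc with nonempty convex values; quasiconvexity of $f^{NI}$ in its second argument makes $F$ and $R$ convex valued, and continuity of $f^{NI}$ makes $F$ have open graph, so $R(x,z)=F(z)\cap K(x)$ is lsc by Proposition~\ref{lsc-int}. Moreover $Q'(C'\times\R^n)\subseteq C'\times N$ is relatively compact, $\Fix(Q')$ is closed, and $f^{NI}(z,z)=0\ge0$, so condition~5 of Theorem~\ref{Main-result-3} holds trivially. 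Theorem~\ref{Main-result-3} then produces a projected solution of \eqref{QEP} on $C'$, which transfers to $C$ and, by Lemma~\ref{equiv-2}, to the GNEP. I note that hypothesis~3 here is exactly the closedness of $\Fix(Q)=D$, which is why Theorem~\ref{Main-result-3} (rather than Corollary~\ref{P-result}, whose statement asks for $K$ itself to be closed) is the natural tool.
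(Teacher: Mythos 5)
Your proof is correct, and its core is exactly the paper's route: reduce the GNEP to \eqref{QEP} for $f^{NI}$ and $K$ via Lemma \ref{equiv-2}, check that $f^{NI}$ is continuous, vanishes on the diagonal and is convex (hence quasiconvex) in its second argument, that $K$ is lsc with non-empty convex values and relatively compact range, and then invoke Theorem \ref{Main-result-3}, with the lower semicontinuity of $R$ supplied by Proposition \ref{lsc-int} (open graph of $F$ from continuity of $f^{NI}$, plus lsc of $K$) and the closedness of $\Fix(Q)=D$ being precisely hypothesis 3 --- you are also right that Theorem \ref{Main-result-3}, not Corollary \ref{P-result}, is the correct tool, despite the paper's own prose pointing to the latter. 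The genuine difference is your truncation step. Theorem \ref{Main-result-3} as stated requires $C$ compact, whereas here each $C_\nu$ is only closed; the paper's two-line proof silently passes over this mismatch. You repair it by replacing $C$ with the compact convex set $C'=\overline{\co(P_C(N))}$, $N=\overline{K(C)}$, and the key observations that $P_{C'}$ and $P_C$ agree on $N$, that $\Fix(Q')=D\cap(C'\times\R^n)$ is therefore closed, and that projected solutions relative to $C'$ transfer back to $C$ because the relevant $z_0$ always lies in $K(x_0)\subset N$; all of these check out. An alternative, arguably closer to what the authors intend, is to note that the proof of Theorem \ref{Main-result-3} never actually uses compactness of $C$: its only fixed-point ingredient is Corollary \ref{FPTLSC}, which requires just that $C\times\R^n$ be closed and convex and that $Q(C\times\R^n)$ be relatively compact, so the theorem holds verbatim for closed $C$ and no truncation is needed. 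Either way, your argument is complete and in fact more rigorous than the one printed in the paper.
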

\begin{proof}
It is enough to see that the set-valued map $R:D\tos\R^n$, defined as
\[
R(x,z)=\{y\in K(x):~f^{NI}(z,y)<0\}
\]
is lsc. The result follows from Theorem \ref{Main-result-3}.
\end{proof}
\begin{remark}
If in the previous theorem we add the closeness of the graph of any constraint map, then assumption 3 holds. Moreover, it is true in Banach spaces. 
The existence results for generalized Nash equilibrium problems for infinite dimensional spaces 
has been recently studied, see for instance \cite{castellani_Giuli15,AGM-16}
\end{remark}
We finish this subsection with
the following result, which is a consequence of Theorem \ref{FGNEP}.
\begin{corollary}\cite[Theorem 2.2]{Cubio}
For any $\nu \in \{1,2, ...,p \}$, let $C_\nu$ be a non-empty, compact and convex subset of $\R^{n_\nu}$, $\theta_\nu:\R^n\to\R$ be a function and $K_\nu:C^{-\nu}\tos C_\nu$ be a set-valued map. Then, the GNEP admits a  solution if
\begin{enumerate}
\item for each $\nu$, $\theta_\nu$ is continuous and convex with respect to the $x^{\nu}$ variable;
\item for each $\nu$, the map $K_\nu$ is lsc with non-empty and convex values;
\item $\Fix(K)$ is closed.
\end{enumerate}
\end{corollary}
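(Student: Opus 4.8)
The plan is to derive this corollary directly from Theorem \ref{FGNEP} by exploiting the fact that each constraint map $K_\nu$ is now a self-map into $C_\nu$. First I would observe that since $K_\nu(C^{-\nu})\subset C_\nu$ for every $\nu$, the product map $K(x)=\prod_{\nu=1}^p K_\nu(x^{-\nu})$ satisfies $K(C)\subset C$; that is, $K$ is a self-map of $C$. In particular, the image of each $K_\nu$ lies in the compact set $C_\nu$ and is therefore relatively compact, so the relative compactness part of hypothesis 2 of Theorem \ref{FGNEP} holds automatically. Hypotheses 1 and 2 of the corollary already supply the continuity and convexity of the $\theta_\nu$ and the lower semicontinuity with non-empty convex values of the $K_\nu$, matching hypotheses 1 and 2 of the theorem.

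The main point is to check that hypothesis 3 of the theorem, namely the closedness of the set $D=\{(x,z)\in C\times\R^n:~x\in P_C(z)\mbox{ and }z\in K(x)\}$, follows from the closedness of $\Fix(K)$. Here I would use that $C$ is convex and closed, so that $P_C$ restricted to $C$ is the identity. Since $z\in K(x)\subset C$ forces $z\in C$ and hence $P_C(z)=\{z\}$, the requirement $x\in P_C(z)$ becomes $x=z$. Consequently $D=\{(x,x):~x\in C,~x\in K(x)\}=\{(x,x):~x\in\Fix(K)\}$, which is precisely the diagonal copy of $\Fix(K)$. As the diagonal of $\R^n\times\R^n$ is closed and $\Fix(K)$ is closed by hypothesis, $D$ is closed. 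This identification of $D$ with $\Fix(K)$ is the step I expect to carry the argument, and it is exactly the self-map reduction already noted for $\Fix(Q)$ earlier in the paper.

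With all three hypotheses verified, Theorem \ref{FGNEP} yields a projected solution of the GNEP. Finally, since $K$ is a self-map of $C$, Remark \ref{rempro} guarantees that the set of projected solutions coincides with the set of classical solutions; equivalently, the projected solution produced above is an honest generalized Nash equilibrium, a fact that can also be read off from Lemma \ref{equiv-2} together with the reduction to \eqref{QEP} associated to $f^{NI}$ and the self-map $K$. Hence the GNEP admits a solution, as claimed.
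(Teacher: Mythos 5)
Your proposal is correct and follows exactly the route the paper intends: the corollary is stated as a consequence of Theorem \ref{FGNEP}, and your verification (relative compactness of $K_\nu(C^{-\nu})\subset C_\nu$, the identification of $D$ with the diagonal copy of $\Fix(K)$ via $P_C(z)=\{z\}$ for $z\in C$, and the self-map reduction turning the projected solution into a classical one) fills in precisely the details the paper leaves implicit.
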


\bibliographystyle{abbrv}

\end{document}